\documentclass[11pt,leqno]{amsart}

\usepackage[letterpaper,margin=1.17in,footskip=0.27in]{geometry}
\usepackage[T1]{fontenc}
\IfFileExists{scholax.sty}{%
  \usepackage[p,scosf,looser]{scholax}
  \usepackage[scaled=1.08,ncf,vvarbb]{newtxmath}
}{%
\usepackage{mathrsfs}
  \usepackage{lmodern}
  \usepackage{mathpazo}
  \usepackage{amssymb}
}
\usepackage{mathtools}
\usepackage{aliascnt}
\usepackage{microtype}
\usepackage{enumitem}
\usepackage[dvipsnames]{xcolor}
\usepackage[colorlinks,pagebackref]{hyperref}
\usepackage[nameinlink,noabbrev]{cleveref}
\usepackage{bookmark}
\usepackage{hyperref}
\usepackage{cleveref}
\usepackage{bookmark}
\hypersetup{citecolor=ForestGreen,linkcolor=red,urlcolor=MidnightBlue}
\allowdisplaybreaks
\numberwithin{equation}{section}
\newtheorem{theorem}{Theorem}[section]
\newtheorem{lemma}[theorem]{Lemma}
\newtheorem{proposition}[theorem]{Proposition}
\newtheorem{example}[theorem]{Example}
\newtheorem{corollary}[theorem]{Corollary}
\newtheorem{claim}{Claim}

\theoremstyle{definition}
\newtheorem{definition}[theorem]{Definition}
\theoremstyle{remark}
\newtheorem{remark}[theorem]{Remark}
\newtheorem{question}[theorem]{Question}
\newcommand{\Q}{\mathbb{Q}}
\newcommand{\Z}{\mathbb{Z}}

\newcommand{\cC}{\mathscr{C}}
\newcommand{\cD}{\mathscr{D}}
\newcommand{\cE}{\mathscr{E}}
\newcommand{\cA}{\mathscr{A}}
\newcommand{\cO}{\mathcal{O}}
\newcommand{\tauB}{\tau_{\mathrm b}}
\newcommand{\st}{\mathbin{\star}}
\newcommand{\ceil}[1]{\left\lceil #1\right\rceil}

\newcommand{\stacktag}[1]{\href{https://stacks.math.columbia.edu/tag/#1}{#1}}
\DeclareMathOperator{\Spec}{Spec}
\DeclareMathOperator{\Proj}{Proj}
\DeclareMathOperator{\Hom}{Hom}

\DeclareMathOperator{\Frac}{Frac}
\DeclareMathOperator{\im}{im}
\DeclareMathOperator{\Tr}{Tr}

\allowdisplaybreaks
\numberwithin{equation}{section}
\title{Test ideals can be computed by boundaries}
\author{Rahul Ajit}
\address{Department of Mathematics, The University of Kansas, 405 Snow Hall, Lawrence, KS 66045}
\email{rahulajit@ku.edu, confusedrahul@yahoo.com}
\dedicatory{With an appendix joint with Daniel Apsley.}
\date{\today}
\begin{document}

\begin{abstract}
Let $R$ be a normal $F$-finite ring of characteristic $p>0$. We show
that \textit{one} effective $\Q$-divisor $\Delta$, with $K_R+\Delta$
$\Q$-Cartier of index prime to $p$, computes the big
test ideal. This answers a question of
Schwede and Smith. More generally, we prove their conjecture that test module of a Cartier
algebra acting on an ideal is the test module of a
single map $\psi$ in the algebra and even the image of $\psi$. We prove a
global version for schemes quasi-projective over an $F$-finite local ring as well. In particular, every such normal, locally
strongly $F$-regular scheme admits an effective boundary $\Delta$
for which $(X,\Delta)$ is locally strongly $F$-regular and
$K_X+\Delta$ is $\Q$-Cartier of index prime to $p$.
\end{abstract}
\maketitle
\tableofcontents

\section{Introduction}

Test ideals originated from tight closure theory \cite{HochsterHuneke1990}.
They are characteristic $p$ analogs of multiplier ideals
\cite{Smith2000,Hara2001,HaraYoshida2003,Takagi2004}. (Big) test ideal
$\tauB(X)$ of a normal $F$-finite scheme $X$ needs no Cartier assumption on $K_X$
\cite{LyubeznikSmith2001,Schwede2011}. But we know many of its good properties mainly for pairs $(X,\Delta)$ with
$K_X+\Delta$ $\Q$-Cartier. For example, if $X$ is integral, separated and $D\geq0$ Cartier,  one alteration computes $\tauB(X,\Delta+\lambda D)$ for all
$\lambda\geq0$. $F$-jumping numbers of this family are discrete and
rational \cite[Theorems~3.1 and~4.1]{SchwedeTuckerZhang2012}, see also
\cite{BlickleSchwedeTakagiZhang2010}. Both also hold for family
$\tauB(X;\cO_X(-D)^\lambda)$ when anti-canonical algebra of such $X$ is
finitely generated \cite[Theorem~5.6 and Theorem~B]{ChiecchioEnescuMillerSchwede2018}.
Without such hypotheses, $F$-jumping numbers of $\tauB(R;\mathfrak m^t)$ can be
irrational \cite[Theorem~2]{AjitJumping2026}.

Since $\tauB(X,\Delta)\subseteq\tauB(X)$ for every boundary $\Delta$
\cite[Proposition~3.23(i)]{Schwede2011}, it is natural to look for a single boundary
$\Delta$, with $K_X+\Delta$ $\Q$-Cartier, giving equality. For affine
$X$ this was asked in \cite[Question~7.3]{SchwedeSmith2010}, also see \cite[Question~6.3]{Schwede2011}. A related question
asks for $(X,\Delta)$ locally strongly $F$-regular when $X$ is so
\cite[Question~7.2]{SchwedeSmith2010}. Schwede and Smith proved the affine case \cite[Corollary~6.9]{SchwedeSmith2010} but non-affine case was open.

For a normal quasi-projective variety $X$ over an algebraically closed field
of characteristic $0$, analogous statements are known. Multiplier ideal of $X$ is the multiplier ideal of a pair $(X,\Delta)$ with
$K_X+\Delta$ $\Q$-Cartier. $X$ is log terminal in the sense of
\cite{deFernexHacon2009} iff some such pair is klt
\cite[Theorems~1.1 and~1.2]{deFernexHacon2009}. In characteristic $p$, Schwede showed that big
test ideal of a normal $F$-finite domain is a finite sum
$\sum_i\tauB(R,\Delta_i)$ with all $K_R+\Delta_i$ $\Q$-Cartier
\cite[Corollary~5.4]{Schwede2011}. But a sum of ideals is in general not one
of the summands. So this does not give one boundary.

It is easier to think in terms of maps. A nonzero $p^{-e}$-linear map
$\phi:F_*^eR\to R$ gives a boundary $\Delta_\phi$ with
$(p^e-1)(K_R+\Delta_\phi)\sim0$. We get $\tauB(R,\Delta_\phi)=\tauB(R;R\langle\phi\rangle)$ for the algebra $R\langle\phi\rangle$ \cite[Theorem~3.11]{SchwedeFAdjunction2009},
\cite[Remark~4.3 and Corollary~4.8]{Schwede2011}. $\Delta_i$ above come from such
maps. More generally, test module $\tau(I,\mathscr B)$ of a Cartier algebra
$\mathscr B$ acting on an ideal $I$ of a reduced ring $R$ is a finite sum of
test modules $\tau(I,R\langle\phi\rangle)$ of single maps $\phi\in\mathscr B$
\cite[Chapter~5, Theorem~8.31]{SchwedeSmithBook2026}.
\cite[Chapter~5, Conjecture~8.33]{SchwedeSmithBook2026} conjectured that one map is enough. Our main result proves this conjecture (see
Section~\ref{sec:test-modules} for definitions). However, we will not pick a good summand. Instead we build a new map which does the job all of them at once.

\begin{theorem}\label{thm:main}
Let $R$ be a reduced $F$-finite ring of characteristic $p>0$,
$\mathscr B$ be a Cartier algebra over $R$ acting on an ideal $I\subseteq R$.
Then there are $s>0$ and $\psi\in\mathscr B_s$ such that 
\[
 \tau(I,\mathscr B)=\tau\bigl(I,R\langle\psi\rangle\bigr)=\psi(F_*^sI)=\psi(F_*^s \tau(I,\mathscr B)).
\]
\end{theorem}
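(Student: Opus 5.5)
We plan to use the known finite-sum result \cite[Chapter~5, Theorem~8.31]{SchwedeSmithBook2026}. It gives maps $\phi_1,\dots,\phi_n$, with $\phi_i\in\mathscr B_{e_i}$, such that
\[
\tau(I,\mathscr B)=\sum_i \tau(I,R\langle\phi_i\rangle).
\]
We then want to glue the $\phi_i$ into a single map. Write $\tau=\tau(I,\mathscr B)$, and recall that for any single map $\phi\in\mathscr B_e$ the test module of $R\langle\phi\rangle$ is $\phi^N(F_*^{Ne}(cI))$ for a test element $c$ and $N\gg0$.

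The plan has four steps.

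\textbf{Step 1: common degree.} First replace each $\phi_i$ by a suitable power, which does not change $R\langle\phi_i\rangle$'s test module (the test module of $\phi^m$ equals that of $\phi$). This lets us assume all $e_i=e$.

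\textbf{Step 2: a test-element-adapted recombination.} Since $\mathscr B_e$ is an $F_*^eR$-module, form
\[
\psi_0=\sum_i \phi_i\circ F_*^e(a_i)\in\mathscr B_e .
\]
We choose the elements $a_i\in R$ so that the summands do not cancel. Choosing the $a_i$ is the heart of the matter. The naive sum $\sum\phi_i$ can lose image through cancellation, for instance $\phi_1=-\phi_2$.

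\textbf{Step 3: forcing surjectivity onto $\tau$.} The fixed-point property $\mathscr B_+(F_*\tau)=\tau$ (uniform compatibility, together with minimality) and the $F$-finiteness of $R$ give finitely many elements $\beta_j\in\mathscr B_{e}$ and $x_j\in\tau$ with $\sum_j\beta_j(F_*^e x_j)$ generating $\tau$. Passing to a common degree again handles the $\beta_j$ alongside the $\phi_i$.

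The central trick is to use distinct Frobenius ``weights'' to separate the contributions. Pick a nonzerodivisor test element $c$ and consider
\[
\psi=\sum_i \phi_i\circ F_*^e(c^{\,m_i}u_i)
\]
with $m_i$ spaced out, or use separate $p$-adic digits via a basis of $F_*^eR$ locally. Then argue via the $F$-finite structure that $\psi(F_*^e(\text{suitable submodule}))$ contains each $\phi_i(F_*^e(\ldots))$. A cleaner route is the following. Because $F_*^eR$ is finite over $R$, the map
\[
\Hom(F_*^eR,R)\otimes F_*^eR\to\Hom(F_*^eR,R)
\]
lets one precompose by elements realizing ``projection onto a summand''. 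Concretely, choose $\psi$ so that the $R$-span of $\{\psi\circ F_*^{e}(r)\}$, as $r$ ranges, already recovers every $\phi_i$ up to multiplication by $c$. This needs $\mathscr B_e$ to be generated by finitely many maps as an $F_*^eR$-module, and it shows that $\tau(I,R\langle\psi\rangle)$ contains $c\cdot$ each $\tau(I,R\langle\phi_i\rangle)$. Multiplying by $c$ is then absorbed by iterating, since the test module is stable under replacing $cI$ by $c^kI$.

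\textbf{Step 4: the image equalities.} Once $\tau(I,R\langle\psi\rangle)=\tau$, the inclusion $\subseteq$ being automatic since $\psi\in\mathscr B$, we obtain the stated equalities. Replace $\psi$ by a high power $\psi^N$ composed with $F_*(c)$. By the description of the test module of a single map, $\psi^N(F_*^{Ne}(cI))=\tau$ for $N\gg0$. Noetherianity gives stabilization of the descending images, and the self-compatibility $\psi(F_*\tau)=\tau$ comes from $\tau$ being the smallest $\psi$-fixed ideal meeting $I$ appropriately. Setting $s=Ne$ and redefining $\psi$ yields
\[
\psi(F_*^sI)=\psi(F_*^s\tau)=\tau .
\]

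The main obstacle is the non-cancellation in Steps 2 and 3: constructing one map whose image dominates all the $\phi_i$ simultaneously. I expect to attack it first locally, choosing the $a_i$ generically via prime avoidance on the finitely many associated primes of relevant quotients. I would then patch the local choices using the finitely many maximal ideals where the containments fail, using Noetherian induction on the support of $\tau/\tau(I,R\langle\psi\rangle)$.
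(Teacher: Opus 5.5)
There is a genuine gap, and it sits where you say the heart of the matter is. Nothing in Steps~2--3 puts anything into the test module of the \emph{new} map. Your concrete criterion is that the span $(F_*^eR)\cdot\psi$ of source multiples recovers every $\phi_i$ up to a factor $c$, and \emph{every} nonzero $\psi$ satisfies it. On a normal domain $\cC_e(R)\cong F_*^eR((1-p^e)K_R)$ has rank one over $F_*^eR$. So you can write $(F_*^ey_i)\cdot\phi_i=(F_*^ex_i)\cdot\psi$, and with $c=\prod_iy_i$ you get $c\phi_i\in(F_*^eR)\cdot\psi$ for all $i$.

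What this buys is only $\tauB(R;R\langle c\phi_i\rangle)\subseteq\tauB(R;R\langle\psi\rangle)$. Since $c\phi_i=(F_*^ec^{p^e})\cdot\phi_i$ has boundary $\Delta_{\phi_i}+\frac{p^e}{p^e-1}\operatorname{div}c$, the left side lies in $c\,\tauB(R,\Delta_{\phi_i})$.

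The factor $c$ cannot be ``absorbed by iterating.'' The formula $\sum_n\phi^{[n]}(F_*^{ne}(cI))$ does not depend on the choice of test element, but that does not let you divide an ideal containment by $c$. If it did, every nonzero $\psi$ would compute $\tau(I,\mathscr B)$, which is false: on $R=k[x]$, with $\Phi$ a generator of $\cC_e(R)$, the map $(F_*^ex^{p^e-1})\cdot\Phi$ has test ideal $(x)\neq R$.

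Without the factor $c$ the requirement is impossible in general. Having $\phi_i\in(F_*^eR)\cdot\psi$ for all $i$ forces $D_\psi\le D_{\phi_i}$ for all $i$. If two $D_{\phi_i}$ have disjoint supports this gives $D_\psi=0$, i.e.\ $(p^e-1)K_R\sim0$.

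Choosing the $a_i$ by prime avoidance to prevent cancellation can only enlarge the \emph{image} of $\psi$. The real obstruction is that a value of a map need not lie in that map's own test ideal. That test ideal is governed by $\Delta_\psi$ and by a test element for $R\langle\psi\rangle$, and your recombination controls neither.

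The paper supplies exactly the two missing ingredients.
\begin{itemize}
\item \emph{Interpolation.} Since $\Frac R$ is imperfect, $F_*^eR$ has rank $>m$ for $e\gg0$. This yields $\beta_i\in\cE_e$ and $w_j\in F_*^e(c^2R)$ with $\beta_i(w_j)=\delta_{ij}c^L$. Then $\psi_0=\sum_i\alpha_i\st\beta_i$ takes the generators $a_1,\dots,a_m$, plus the extra value $a_0=c$, on $F_*^{s_0}(c^2R)$.
\item \emph{Composing with $\theta$.} Take $\theta$ generating on $R_c$. Then $\Delta_{\psi_0\st\theta}=\lambda\Delta_{\psi_0}$ on $\Spec R_c$ with $\lambda<1$. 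Lemma~\ref{lem:SS-convexity} makes the localized algebra strongly $F$-regular, and Lemma~\ref{lem:principal-test-element} makes $c^2$ a test element for $R\langle\psi\rangle$. Only then do the prescribed values land in $\tauB(R;R\langle\psi\rangle)$.
\end{itemize}
For reduced $R$ with $\mathscr B$ acting only on $I$, you also need to pass to the normalization $S$ of $R/\operatorname{Ann}_RJ$. There $J$ is an $S$-ideal, the maps extend to $S$, and the $S$-coefficients are cleared by some $c\in J$. Your proposal treats $\mathscr B_e$ as maps on $R$ throughout.

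Step~4 has its own gaps.
\begin{itemize}
\item Minimality of $\tau$ only gives $\psi(F_*^s\tau)\subseteq\tau$, not equality.
\item The chain $\psi^{[n]}(F_*^{ns}(cI))$ is in general not monotone, and Noetherianity gives the ascending chain condition, not stabilization of descending chains. The paper arranges $c^4S\subseteq\theta(F_*^e(c^4S))$ so that the chain increases to $J$.
\item The source multiple $(F_*c)\cdot\psi^{[N]}$ generates a different algebra, whose test module can shrink. One must reprove that $\eta=(F_*^{Ne}c^2)\cdot\theta^{[N]}$ still has test ideal $J$. The paper does this via Lemma~\ref{lem:principal-test-element}, using $c\in\eta(F_*^{Ne}(cS))$ and strong $F$-regularity on $S_c$.
\end{itemize}
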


So $\tau(I,\mathscr B)$ is the test module of one map and also its image. Every Cartier
subalgebra $\cD$ of $\cC(R)=\bigoplus_{e\geq0}\Hom_R(F_*^eR,R)$ acts on $I=R$. If $R$ is a normal domain and
$\cD_e\neq0$ for some $e>0$, test module of $\cD$ is the big test ideal
$\tauB(R;\cD)$. Theorem~\ref{thm:main} gives:

\begin{corollary}\label{thm:intro-principalization}
Let $R$ be a normal $F$-finite domain, $\cD\subseteq\cC(R)$ be a
Cartier subalgebra with $\cD_e\neq0$ for some $e>0$. Then there are $s>0$ and
$0\neq\psi\in\cD_s$ such that
\[
 \tauB(R;\cD)=\tauB\bigl(R;R\langle\psi\rangle\bigr)
             =\tauB(R,\Delta_\psi).
\]
Moreover, $\Delta_\psi\geq0$ and $(p^s-1)(K_R+\Delta_\psi)\sim0$.
\end{corollary}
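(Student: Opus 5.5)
The corollary is obtained by applying Theorem~\ref{thm:main} with $I=R$ and $\mathscr B=\cD$, and then translating between test modules, big test ideals and boundaries.

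First, I check that the hypotheses of Theorem~\ref{thm:main} hold. A normal $F$-finite domain is reduced and $F$-finite. Every Cartier subalgebra $\cD\subseteq\cC(R)$ acts on the unit ideal $I=R$, since each $\phi\in\cD_e$ sends $F_*^eR$ into $R$. The theorem then produces $s>0$ and $\psi\in\cD_s$ with
\[
\tau(R,\cD)=\tau\bigl(R,R\langle\psi\rangle\bigr)=\psi(F_*^sR).
\]

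Next, I show $\psi\neq0$. Because $\cD_e\neq0$ for some $e>0$ and $R$ is a normal domain, the introduction identifies $\tau(R,\cD)$ with $\tauB(R;\cD)$. This ideal is nonzero: the standard existence of test elements for Cartier algebras with a nonzero positive-degree piece gives it. So $\psi(F_*^sR)=\tauB(R;\cD)\neq0$, and hence $\psi\neq0$. In particular $R\langle\psi\rangle$ has a nonzero piece in positive degree $s$. The same identification therefore applies to $R\langle\psi\rangle$ and gives $\tau(R,R\langle\psi\rangle)=\tauB(R;R\langle\psi\rangle)$. This yields the first equality $\tauB(R;\cD)=\tauB(R;R\langle\psi\rangle)$.

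For the second equality, I use the correspondence between nonzero $p^{-s}$-linear maps and boundaries. A nonzero $\psi\in\Hom_R(F_*^sR,R)$ corresponds to a nonzero section of $F_*^s\cO_R((1-p^s)K_R)$. Its divisor of zeros is an effective divisor linearly equivalent to $(1-p^s)K_R$, and this divisor equals $(p^s-1)\Delta_\psi$. Hence $\Delta_\psi\geq0$ and $(p^s-1)(K_R+\Delta_\psi)\sim0$. The cited results, \cite[Theorem~3.11]{SchwedeFAdjunction2009} and \cite[Remark~4.3 and Corollary~4.8]{Schwede2011}, then give $\tauB(R;R\langle\psi\rangle)=\tauB(R,\Delta_\psi)$.

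The only step requiring any care is the nonvanishing of $\psi$. I handle it through the nonvanishing of the big test ideal, not through Theorem~\ref{thm:main} directly, since the theorem by itself does not rule out $\psi=0$. The rest is bookkeeping of definitions.
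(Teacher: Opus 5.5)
Your proof is correct, but it takes a different route from the paper's formal proof. The paper proves this corollary in Section~\ref{sec:affine} directly from Corollary~\ref{cor:normal-principal}, applied with $S=R$ and $\cE=\cD$. That corollary already produces a nonzero $\psi\in\cD_s$ with $\tauB(R;\cD)=\tauB(R;R\langle\psi\rangle)$. The remaining equality and the assertions about $\Delta_\psi$ then come from \eqref{eq:principal-comparison} and \eqref{eq:log-trivial}, exactly as in your last step.

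You instead go through Theorem~\ref{thm:main} with $I=R$, which is the framing the introduction uses. This costs you two extra checks, and you do both correctly. First, $\psi(F_*^sR)=\tau(R,\cD)=\tauB(R;\cD)\neq0$ forces $\psi\neq0$. Nonvanishing of $\tauB(R;\cD)$ is actually immediate from the definition, since it meets $R^\circ$, so you do not need existence of test elements. Second, the identification of test modules with big test ideals then also applies to $R\langle\psi\rangle$.

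Logically your route is longer. For a normal domain with $I=R$, the proof of Theorem~\ref{thm:main} reduces to Corollary~\ref{cor:normal-principal}: there $A=S=R$ and $\cE=\cD$. It then adds the modification $\eta=(F_*^{Ne}c^2)\cdot\theta^{[N]}$. There is no circularity, however. What your route buys is the stronger conclusion $\psi(F_*^sR)=\psi(F_*^s\tauB(R;\cD))=\tauB(R;\cD)$, which is exactly what the paper uses in Remark~\ref{rem:one-image}.
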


Applied to the full Cartier algebra on each component of $\Spec R$, we
answer \cite[Question~7.3]{SchwedeSmith2010}.

\begin{corollary}\label{thm:intro-affine}
Let $R$ be a normal $F$-finite ring. There is an effective $\Q$-divisor
$\Delta$ on $X=\Spec R$ with $\tauB(R,\Delta)=\tauB(R)$ and
$(p^s-1)(K_X+\Delta)\sim0$ for some $s>0$.
\end{corollary}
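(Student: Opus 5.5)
We reduce \Cref{thm:intro-affine} to \Cref{thm:intro-principalization} by working one connected component at a time. Since $R$ is normal and $F$-finite, it is Noetherian, so $\Spec R$ has finitely many connected components. This gives a decomposition $R=R_1\times\cdots\times R_n$ with each $R_i$ a normal $F$-finite domain. The big test ideal, divisors, canonical divisors and linear equivalence all decompose compatibly: $\tauB(R)=\prod_i\tauB(R_i)$, and a $\Q$-divisor on $X$ is a tuple of $\Q$-divisors on the $X_i=\Spec R_i$. So it suffices to produce, for each $i$, an effective $\Delta_i$ with $\tauB(R_i,\Delta_i)=\tauB(R_i)$ and $(p^{s_i}-1)(K_{X_i}+\Delta_i)\sim0$, and then to arrange a common $s$.

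For a fixed $i$ we take $\cD=\cC(R_i)$, the full Cartier algebra. We need $\cC(R_i)_e\neq0$ for some $e>0$. Since $R_i$ is an $F$-finite domain, $F_*R_i$ is a finite torsion-free module of rank $\ge1$, so $\Hom_{R_i}(F_*R_i,R_i)\neq0$; for instance, extend a nonzero $\Frac(R_i)$-linear functional and clear denominators. Moreover $\tauB(R_i;\cC(R_i))=\tauB(R_i)$ by definition of the big test ideal as the test module of the full Cartier algebra acting on $R_i$, which is the identification stated just before \Cref{thm:intro-principalization}. Applying \Cref{thm:intro-principalization} then yields $s_i>0$ and $0\neq\psi_i\in\cC(R_i)_{s_i}$ with $\tauB(R_i)=\tauB(R_i,\Delta_{\psi_i})$, $\Delta_{\psi_i}\ge0$ and $(p^{s_i}-1)(K_{X_i}+\Delta_{\psi_i})\sim0$.

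To get a common $s$, let $s=\prod_i s_i$ or any common multiple. Since $p^{s_i}-1$ divides $p^{s}-1$ whenever $s_i\mid s$, we get $(p^s-1)(K_{X_i}+\Delta_{\psi_i})\sim0$ for each $i$. Alternatively one could replace $\psi_i$ by its self-composite $\psi_i^{s/s_i}$, which has the same associated divisor, but the divisibility argument already suffices. Set $\Delta=\sum_i\Delta_{\psi_i}$, which is supported componentwise. Then $\tauB(R,\Delta)=\prod_i\tauB(R_i,\Delta_{\psi_i})=\tauB(R)$ and $(p^s-1)(K_X+\Delta)\sim0$.

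The only step needing care is the compatibility of $\tauB$ and $K_X$ with the product decomposition. Nothing in the argument is a real obstacle: this holds because both are defined locally, and $\Spec R$ is the disjoint union of the $\Spec R_i$.
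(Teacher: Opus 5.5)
Your proof is correct and is essentially the paper's argument: split $R$ into normal domains, apply the domain-case principalization to the full Cartier algebra on each factor, and assemble the boundaries componentwise. The only cosmetic difference is in getting a common $s$. The paper invokes Corollary~\ref{cor:normal-principal} on $R$ itself, which passes to iterates $\theta_i^{[s/s_i]}$ (Lemma~\ref{lem:iterate}) to obtain a single map $\theta\in\cC_s(R)$, whereas you use $p^{s_i}-1\mid p^s-1$ directly; both work.
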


For non-affine $X$ we need one global divisor. Local boundaries above need not
glue. So we run the affine argument homogeneously, on a section ring of a
projective compactification. As in \cite[Theorem~4.3]{SchwedeSmith2010} we
work over an $F$-finite local\footnote{our proof never uses that the base is local} ring, but unlike there we do not need any assumption on ($F$-)singularities of $X$.

\begin{theorem}\label{thm:intro-global}
Let $X$ be a normal Noetherian scheme quasi-projective over an $F$-finite
local ring. There is an effective $\Q$-divisor $\Delta$ such that
$\tauB(X,\Delta)=\tauB(X)$ with $K_X+\Delta$
$\Q$-Cartier of index prime to $p$.

If $X$ is integral and $\dim X>0$, one can choose a normal projective
compactification $X\subseteq Y$, an ample Cartier divisor $H$ on $Y$, an effective $\Q$-divisor $\Delta_Y$ with $\Delta=\Delta_Y|_X$, such
that $\tauB(Y,\Delta_Y)=\tauB(Y)$ and $(p^s-1)(K_Y+\Delta_Y)\sim aH$ for
some $s>0$, $a > 0$.
\end{theorem}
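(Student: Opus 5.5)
We reduce to the affine statement, Theorem~\ref{thm:main}, applied to a section ring, and then read off a global boundary from a homogeneous map. First we dispose of the easy reductions. Since $X$ is Noetherian and normal, it is a finite disjoint union of integral normal schemes. Boundaries and test ideals are computed componentwise, so we may assume $X$ is integral. If $\dim X=0$, then $X$ is the spectrum of a field and $\Delta=0$ works. So assume $\dim X>0$. Choose a projective compactification over the base and normalize it. The result is a normal projective integral $Y\supseteq X$ (normalization is finite since everything is $F$-finite, hence excellent), and we fix an ample Cartier divisor $H$ on $Y$. Since $\tauB$ is local and restriction of $\Q$-divisors is compatible with restriction of test ideals to opens, it suffices to prove the second assertion: find $\Delta_Y\geq0$ with $\tauB(Y,\Delta_Y)=\tauB(Y)$ and $(p^s-1)(K_Y+\Delta_Y)\sim aH$. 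Then $\Delta=\Delta_Y|_X$ has $K_X+\Delta$ $\Q$-Cartier of index dividing $p^s-1$, because $H|_X$ is Cartier.

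Next we pass to the section ring $S=\bigoplus_{n\geq0}H^0(Y,\cO_Y(nH))$, replacing $H$ by a multiple so that $S$ is a normal $F$-finite graded domain with $Y=\Proj S$ (and $\dim S\geq 2$). Let $\cD\subseteq\cC(S)$ be the \emph{graded} Cartier subalgebra. In degree $e$ it consists of the $p^{-e}$-linear maps $\phi\colon F^e_*S\to S$ that are homogeneous in the sense of sending $F^e_*S_{n}$ into $S_{(n+c)/p^e}$ for a fixed shift $c$, with $c\equiv0$ necessary for integrality. Such maps correspond to global sections of $\cO_Y((1-p^e)K_Y+cH)$ for suitable $c$, that is, to maps $F^e_*\cO_Y(cH)\to\cO_Y$ twisted appropriately. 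The point to check is that $\cD_e\neq0$, and that on each affine chart $D_+(f)$ the degree-zero localizations of elements of $\cD$ generate the full Cartier algebra of $\cO_Y(D_+(f))$ up to multiplication by units and powers of $f$. This holds because $\cO_Y((1-p^e)K_Y+cH)$ is globally generated for $c\gg0$ by ampleness. Consequently $\tauB(S;\cD)$ is a homogeneous ideal whose sheafification $\widetilde{\tauB(S;\cD)}$ equals $\tauB(Y)$. This uses the standard comparison of test ideals of the cone with those of $\Proj$ on the punctured spectrum, plus the fact that the test module is computed locally and after inverting $f$.

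Now apply Corollary~\ref{thm:intro-principalization}, or rather Theorem~\ref{thm:main}, to $\cD$ acting on $I=S$. This gives $\psi\in\cD_s$ with $\tauB(S;\cD)=\tauB(S;S\langle\psi\rangle)$. We must ensure $\psi$ can be taken homogeneous. Here we inspect the construction in Theorem~\ref{thm:main}: the map $\psi$ is built as an $S$-linear combination of compositions of elements of $\cD$. If all the inputs (generators, test elements, coefficients) are chosen homogeneous, then the output is a sum of homogeneous pieces of possibly different shifts. We then replace $\psi$ by premultiplying its terms by suitable powers of a homogeneous element to equalize shifts. This \textbf{homogeneity of $\psi$} is the main obstacle: one must verify that the proof of Theorem~\ref{thm:main} is compatible with a grading, or else that some homogeneous component of $\psi$ already suffices. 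I would try first to show the latter directly, using that $\tauB(S;\cD)$ is homogeneous and that test modules of $S\langle\psi\rangle$ only shrink when passing to a component. If that fails, I would redo the construction homogeneously.

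Given a homogeneous $\psi$ of shift $c$, it corresponds to a nonzero map $F^s_*\cO_Y(cH')\to\cO_Y$, hence to an effective divisor $\Delta_Y$ with $(1-p^s)(K_Y+\Delta_Y)\sim -aH$ for $a=c$ after adjusting signs, where positivity of $a$ is arranged by twisting $\psi$ by a homogeneous element of positive degree. Such a twist does not change the test ideal up to the charts where that element is a unit, and we use two elements with no common zero to cover $Y$. Localizing on $D_+(f)$, the map $\psi$ becomes a generator of the Cartier algebra associated to $\Delta_Y|_{D_+(f)}$. The identity $\tauB(S;S\langle\psi\rangle)=\tauB(S;\cD)$ then sheafifies to $\tauB(Y,\Delta_Y)=\tauB(Y)$, via \cite[Theorem~3.11]{SchwedeFAdjunction2009}. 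This completes the plan.
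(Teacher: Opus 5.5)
There is a genuine gap, at exactly the point you flag: you never produce a homogeneous $\psi$, and your first fallback does not work. Every element of $\cC(S)$ is a finite sum of its homogeneous components, so your $\cD$ is all of $\cC(S)$. Theorem~\ref{thm:main} therefore hands you an arbitrary, typically inhomogeneous, map. A homogeneous component $\psi_\delta$ generates an algebra $S\langle\psi_\delta\rangle$ that is not contained in $S\langle\psi\rangle$, so there is no monotonicity in either direction. In any case, an inclusion $\tauB(S;S\langle\psi_\delta\rangle)\subseteq\tauB(S;S\langle\psi\rangle)$ would point the wrong way for what you need. Concretely, take $S=k[x,y]$ with $p\geq3$ and let $\Phi$ generate $\cC_1(S)$. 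The map $(F_*(x^{p-1}+y^p))\cdot\Phi$ has boundary $\frac1{p-1}\mathrm{div}(x^{p-1}+y^p)$ and trivial test ideal, since the $F$-pure threshold of $x^{p-1}+y^p$ is at least $(2p-1)/p^2>1/(p-1)$. Its two homogeneous components, however, are $(F_*x^{p-1})\cdot\Phi$ and $y\Phi$, with test ideals $(x)$ and $(y)$, and these are nontrivial even on the punctured plane.

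So everything rests on ``redo the construction homogeneously'', and that step needs an idea you do not supply. In the interpolation $\psi_0=\sum_i\alpha_i\st\beta_i$, with $\alpha_i(F_*^Dc^L)=a_i$ and $\beta_i(w_j)=\delta_{ij}c^L$, the summand $\alpha_i\st\beta_i$ has degree $\deg a_i-p^{-D}\deg w_i$ by \eqref{eq:graded-product-degree}. With a dual basis of one degree, the prescribed values must therefore all have one degree, and homogeneous generators $b_i$ of $\tauB(S)$ do not.

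The paper gives up equality on $S$ and asks only for equality on $U=\Spec S\setminus V(S_+)$, which is all the descent to $Y$ uses. It interpolates the elements $b_ix_j^{N-\deg b_i}$, where the $x_j$ generate $S_1$. These all have degree $N$ and generate an ideal that agrees with $\tauB(S)$ on $U$ but not on $S$. The construction uses homogeneous $\theta,c$ (Lemma~\ref{lem:homogeneous-common-data}), a degree-$0$ dual basis built on the Laurent chart $S_c=(S_c)_0[x_1^{\pm1}]$, and homogeneous components of the synchronized $\alpha_i$ (Proposition~\ref{prop:homogeneous-interpolation}).

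The descent is also more than ``sheafification''. On $D_+(f)=\Spec B$ one has $\psi_f=(F_*^sf^a)\cdot\widetilde{\phi_f}$. To contract the equality from $S_f$ to $B$, you need that test ideals commute with $B\subseteq B[f^{\pm1}]$ (Lemma~\ref{lem:laurent-test-ideal}).

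Finally, your way of making $a>0$ destroys the single boundary. Multiplying $\psi$ by a homogeneous $g$ of positive degree adds a positive multiple of $\mathrm{div}_Y(g)$ to $\Delta_Y$. This can shrink the test ideal along $V_+(g)$, and twists by two different elements give two different boundaries that do not glue. The paper instead uses the freedom to take $D$ large in Lemma~\ref{lem:synchronization}: $\deg\psi=N+p^{-D}(p^{-e}\deg\theta-BN)>0$ for $D\gg0$ (Remark~\ref{rem:a-positive}).
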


Test ideal detects strong $F$-regularity. So this answers
\cite[Question~7.2]{SchwedeSmith2010} in our setting. In fact $X$ and
$(X,\Delta)$ have the same strongly $F$-regular locus.

\begin{corollary}\label{cor:strong-locus}
Let $X$ satisfy the hypotheses of Corollary~\ref{thm:intro-affine} or
Theorem~\ref{thm:intro-global}. Choose $\Delta$ as there. For every
$x\in X$, the local ring $\cO_{X,x}$ is strongly $F$-regular if and only
if the pair $(\cO_{X,x},\Delta|_{\Spec\cO_{X,x}})$ is. In particular, if
$X$ is locally strongly $F$-regular, then so is $(X,\Delta)$.
\end{corollary}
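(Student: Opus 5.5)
The plan is to reduce the statement to the fact that the big test ideal is compatible with localization, together with the criterion that a local ring (or pair) is strongly $F$-regular exactly when its big test ideal is the unit ideal.

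First I would record the local criterion. For an $F$-finite normal local ring $(S,\mathfrak n)$ and an effective $\Q$-divisor $\Delta_S$ with $K_S+\Delta_S$ $\Q$-Cartier of index prime to $p$, the pair $(S,\Delta_S)$ is strongly $F$-regular iff $\tauB(S,\Delta_S)=S$. Likewise, $S$ itself is strongly $F$-regular iff $\tauB(S)=S$. Both are standard; see \cite{Schwede2011} and \cite{SchwedeSmith2010}.

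Next I would use localization. For $x\in X$ set $S=\cO_{X,x}$. The big test ideal of an $F$-finite scheme, with or without boundary, commutes with localization: $\tauB(X,\Delta)_x=\tauB(S,\Delta|_{\Spec S})$ and $\tauB(X)_x=\tauB(S)$. This follows from the description of $\tauB$ as the smallest nonzero ideal (on each component) stable under the relevant Cartier algebra. For $F$-finite rings that algebra localizes, since $\Hom_R(F^e_*R,R)$ commutes with localization, and the boundary condition is local. Also, $K_X+\Delta$ is $\Q$-Cartier of index prime to $p$ on $X$, so the same holds for $K_S+\Delta|_{\Spec S}$.

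Now I would put the pieces together. By Corollary~\ref{thm:intro-affine} or Theorem~\ref{thm:intro-global}, $\tauB(X,\Delta)=\tauB(X)$. Localizing at $x$ gives $\tauB(S,\Delta|_{\Spec S})=\tauB(S)$. Hence $\tauB(S)=S$ iff $\tauB(S,\Delta|_{\Spec S})=S$, which is the claimed equivalence. The "in particular" follows by applying this at every point.

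The only real point to check is the compatibility of $\tauB$ with localization on a possibly non-affine, non-integral $X$. If $X$ is not integral, normality makes it a disjoint union of integral components, so this reduces to the affine integral case. That case is the known localization result for test ideals of $F$-finite rings.
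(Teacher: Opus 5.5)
Your proposal is correct and is essentially the paper's proof: the equality $\tauB(X,\Delta)=\tauB(X)$ localizes at $x$ by \cite[Proposition~3.23(ii)]{Schwede2011}, and strong $F$-regularity of the local ring, or of the pair, holds exactly when its test ideal is trivial \cite[Proposition~3.23(iii)]{Schwede2011}. One caution: your heuristic ``the Cartier algebra localizes, so the smallest stable ideal does'' does not by itself prove localization, since minimality is not obviously preserved and the test-element description \eqref{eq:hdbs-formula} is what makes it work; but invoking the known localization result, as you do at the end, settles this.
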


\begin{proof}
Equality of test ideals localizes
\cite[Proposition~3.23(ii)]{Schwede2011}. A pair is strongly
$F$-regular iff its test ideal is trivial
\cite[Proposition~3.23(iii)]{Schwede2011}.
\end{proof}

For a normal domain $R$, the image part in Theorem~\ref{thm:main} gives
a bit more: we can choose $\Delta$ in Corollary~\ref{thm:intro-affine} such
that $(\Spec R_P,\Delta)$ is sharply $F$-pure exactly when $R_P$ is strongly
$F$-regular (Remark~\ref{rem:one-image}).

Section~\ref{sec:consequences} transfers to $\tauB(X)$ some important results known for
pairs with $K_X+\Delta$ $\Q$-Cartier. We first treat mixed test ideals
(Corollary~\ref{cor:intro-mixed}). Then we express $\tauB(X)$ as a trace image from one
finite separable cover (Corollary~\ref{cor:one-cover}, via
\cite[Theorem~4.1]{SchwedeTuckerZhang2012}). We end this note with Appendix \ref{App}, joint with Daniel Apsley, where we answer a question of Polstra and Tucker \cite[Question~7.5]{PolstraTucker2018} negatively on computing $F$-signature using a boundary. We also record 2 interesting examples that were found by AI.

Section~\ref{sec:background}
introduces the main players.
Section~\ref{sec:affine} treats normal domains and proves
Corollaries~\ref{thm:intro-principalization} and~\ref{thm:intro-affine}.
Section~\ref{sec:general} proves Theorem~\ref{thm:main}. Section~\ref{sec:projective} proves Theorem~\ref{thm:intro-global}.

\subsection{Idea of the proof}\label{sec:idea}

Main case is normal domain $R$, $I=R$ , $\cD\subseteq\cC(R)$ with
$\cD_1\neq0$. We explain it first. Write $\tauB(R;\cD)=(a_1,\ldots,a_m)$.
For every nonzero homogeneous $\psi\in\cD$ of positive degree,
$R\langle\psi\rangle\subseteq\cD$. So
$\tauB(R;R\langle\psi\rangle)\subseteq\tauB(R;\cD)$. Hence we need one map
$\psi$ with $a_1,\ldots,a_m\in\tauB(R;R\langle\psi\rangle)$. There are $2$
problems: to find one $\psi$ which takes all the values $a_i$ and to make
sure that these values lie in the test ideal of $\psi$ itself.

\emph{One map with given values.} This is linear algebra, much like Lagrange
interpolation. As in proof of \cite[Proposition~5.6]{Schwede2011}, we fix
$0\neq\theta\in\cD_1$, a test element $c$ such that $R_c$ is regular,
$\theta_c$ generates $\Hom_{R_c}(F_*R_c,R_c)$ and $c\in\theta(F_*(cR))$
(Lemma~\ref{lem:common-data}). If $\dim R>0$, then $\Frac R$ is not perfect. So rank of $F_*^eR$ goes to infinity with $e$ (Lemma~\ref{lem:nonperfect}).
For $e\gg0$, after replacing $c$ by a multiple, $(F_*^eR)_c$ is free of rank
$>m$. Clearing denominators in $m+1$ basis vectors and in their coordinate
functions we get $w_0,\ldots,w_m\in F_*^eR$ and $\beta_0,\ldots,\beta_m\in\cD_e$
with $\beta_i(w_j)=\delta_{ij}c^L$ for some $L\geq1$. On the other side, $c^L$ is a test element. By \eqref{eq:hdbs-formula}
each $a_i$ is a sum of values of maps in $\cD$ on $c^L$. Composing with
suitable multiples of $\theta$ moves all these maps to one degree $D$
(Lemma~\ref{lem:synchronization}). This gives $\alpha_i\in\cD_D$ with
$\alpha_i(F_*^Dc^L)=a_i$. Now $\psi_0=\sum_i\alpha_i\st\beta_i$ sends
$F_*^Dw_j$ to $a_j$. $\beta_j$ picks up the direction $w_j$. $\alpha_j$
puts the value $a_j$ there, like a Lagrange polynomial
(Proposition~\ref{prop:finite-interpolation}). Here $a_0$ is one extra value,
which we choose below.

\emph{Values in the test ideal.} A value of a map need not lie in the test
ideal of this map. By \eqref{eq:hdbs-formula} it does if $c^2$ is a test
element for $R\langle\psi\rangle$ and the input lies in $F_*^s(c^2R)$, where
$s=\deg\psi$. By a variant of \cite[Lemma~5.7]{Schwede2011}
(Lemma~\ref{lem:principal-test-element}), $c^2$ is such a test element as
soon as $c\in\psi(F_*^s(cR))$ and $(\Spec R_c,\Delta_\psi)$ is strongly
$F$-regular. Our $\psi_0$ has no reason to satisfy any of this. So we cheat a
little. We take $a_0=c$ and arrange $w_j\in F_*^e(c^2R)$. Then replace
$\psi_0$ by $\psi=\psi_0\st\theta$. Composing with $\theta$ loses no values on
$c^2R$, as $c^2R\subseteq\theta(F_*(c^2R))$. So $\psi$ still takes the values
$a_j$ on $F_*^s(c^2R)$. In particular $c\in\psi(F_*^s(cR))$. Composing
with $\theta$ makes $\psi$ strongly $F$-regular on $\Spec R_c$. There
$\psi_0$ is surjective because its image contains the unit $c$. So
$(\Spec R_c,\Delta_{\psi_0})$ is sharply $F$-pure. As $\theta_c$ is a
generator, Lemma~\ref{lem:product-divisor} gives
$\Delta_\psi=\lambda\Delta_{\psi_0}$ on $\Spec R_c$, with
$\lambda=p(p^{s_0}-1)/(p^{s_0+1}-1)<1$ and $s_0=\deg\psi_0$. On a regular
scheme, a sharply $F$-pure boundary times a rational $\lambda\in(0,1)$ is
strongly $F$-regular \cite[Lemma~4.9(iii)]{SchwedeSmith2010}. So $c^2$ is a
test element for $R\langle\psi\rangle$. All $a_j$ lie in
$\tauB(R;R\langle\psi\rangle)$ (Lemma~\ref{lem:compose-theta}). This idea to make the algebra of one map strongly $F$-regular by composing comes from
\cite[Theorem~4.3(i)]{SchwedeSmith2010}, see also
\cite[Section~6]{Schwede2011}. There one only needs
$1\in\tauB(R;R\langle\psi\rangle)$, while we need every $a_j$.

\emph{Reduced rings and ideals.} First, $\cD_1\neq0$ is not needed. An
iterate of a map has the same boundary, hence same test ideal. So
Veronese $\bigoplus_e\cD_{ne}$ has the same test ideal as $\cD$
(Lemma~\ref{lem:veronese}). So we can run the argument with $F^n$ in place
of $F$ (Corollary~\ref{cor:normal-principal}). Now let $R$ be reduced, 
$J=\tau(I,\mathscr B)\neq0$. Let $S$ be the normalization of
$R/\operatorname{Ann}_RJ$. All the $x\in J$ with $Sx\subseteq J$ form a
$\mathscr B$-submodule which agrees with $J$ at the minimal primes of $R$. So
by minimality it is all of $J$. Thus $J$ is an ideal of $S$. Every
map in $\mathscr B$ extends from $J$ to $S$
(Proposition~\ref{prop:normalize-J}). So normal case gives one map
$\theta$ on $S$, of some degree $e$, with test ideal $J$. But $\theta$ is a sum of extended maps with coefficients in $S$, not in $R$. Choose \(c\in J\) such that \(cS\subseteq R/\operatorname{Ann}_R J\). Then \(c\) clears the coefficients coming from \(S\). However, multiplying a map on the source by \(c\) may decrease its test ideal. So we first take an iterate $\theta^{[N]}$ which
maps $F_*^{Ne}(c^4S)$ onto $J$. Then $\eta=(F_*^{Ne}c^2)\cdot\theta^{[N]}$
still maps onto $J$ and has test ideal $J$. This $\eta$ comes from a map
$\psi\in\mathscr B$, which does the job.

\emph{Projective case.} Let $S$ be a standard graded normal $F$-finite domain,
$Y=\Proj S$, $U=\Spec S\setminus V(S_+)$. The argument above works with
homogeneous maps, but all prescribed values must have the same degree. This
is easy to arrange. If $b_i$'s are homogeneous generators of $\tauB(S)$ and
$x_j$ generate $S_1$, then for $N\gg0$ the elements $b_ix_j^{N-\deg b_i}$ all
have degree $N$, generating the same ideal as the $b_i$ on $U$.
(Powers of the $b_i$ would have the right support but the wrong ideal.) Resulting homogeneous $\psi$ has degree $a/p^s$ for some $a\in\Z$. It gives a
map $F_*^s\cO_Y(-aH)\to\cO_Y$, where $\cO_Y(H)\cong\cO_Y(1)$, hence a boundary
$\Delta_Y$ on $Y$. On a chart $D_+(f)=\Spec B$, $f\in S_1$, we have
$S_f=B[f,f^{-1}]$. Up to a unit $\psi_f$ is the Laurent extension of the
map it induces on $B$. Test ideals commute with Laurent extensions
(Lemma~\ref{lem:laurent-test-ideal}, cf.\
\cite[Theorem~4.1 and Application~5.2]{BravoSmith2002}). So equality of test
ideals on $U$ descends to $Y$ (Proposition~\ref{prop:Proj-descent}).

Our boundary is far from canonical. Our construction gives no control of
its behavior under
reduction from characteristic $0$ (compare \cite[Remark~6.2]{Schwede2011}). We may choose $a>0$ in Theorem~\ref{thm:intro-global}. So
$K_Y+\Delta_Y$ can be chosen ample. We cannot in general choose
$a<0$. So this does not give  log Fano type statement \cite{SchwedeSmith2010}.

\subsection{Conventions}

All rings are Noetherian of characteristic $p>0$ and $F$-finite, meaning
Frobenius is finite. For a reduced ring $R$, $R^\circ$ is the complement of
the union of its minimal primes. A boundary is an effective $\Q$-divisor. All
test ideals in this note are big test ideals. On a normal $F$-finite scheme
$X$ with a boundary $\Gamma$, $\tauB(X,\Gamma)$ is the ideal sheaf given by
the test ideal on each affine open. These glue since test ideals localize
\cite[Proposition~3.23(ii)]{Schwede2011}. For a normal integral $F$-finite scheme $X$, let $\omega_X$ be the canonical
module of \cite[Theorem~1.0.1 and Corollary~1.0.3]{BhattBlickleSchwedeTucker2026}. Choose a Weil  $K_X$ such that
$\cO_X(K_X)\cong\omega_X$. These choices are compatible with
restriction to open subschemes and satisfies
$\mathcal Hom_{\cO_X}(F_*\cO_X,\omega_X)\cong F_*\omega_X$. For $X=\Spec R$ we also write $K_R$. Finally,
$F$-finite rings are excellent \cite{Kunz1976}; see also \cite[Chapter~1, Theorem~1.23(b)]{SchwedeSmithBook2026}.

\subsection*{Acknowledgments}

I am extremely grateful to my advisors. Karl Schwede introduced me to these questions, taught me char $p$ patiently \& carefully and for countless stimulating discussions. Also \cite{Schwede2011} played fundamental role in this work. Christopher Hacon shared many geometric insights and our discussion surrounding \cite{deFernexHacon2009} over the years shaped several parts of the paper philosophically. I am grateful to 
Daniel Hernández \& Emily Witt for their support, stimulating mathematical discussions and interesting questions. In particular, a primitive version of \Cref{sec:affine} was written in the form of a letter to Daniel Hernández. Special thanks go to Daniel Apsley and Gari Chua for many long, exciting discussions surrounding this work. Finally, it is a pleasure to thank Purnaprajna Bangere, Harold Blum, Paolo Cascini, Seungsu Lee, Mircea Mustață, Dhruv Sidana, Burt Totaro and Joe Waldron for their interest in this work. 

\subsection*{AI statement}
All new results in the main body of this paper, ideas behind them and their proofs are mine, worked out by hand. This includes interpolation construction of
Section~\ref{sec:affine}, Theorem~\ref{thm:main} and its proof,  homogeneous construction in Section~\ref{sec:projective} and the applications
in Section~\ref{sec:consequences}. I used Claude (Anthropic) to TeX and edit the paper, to check my proofs and to check every citation against its source. Its mathematical contributions are following simplifications and corrections:
\begin{enumerate}
\item It noticed that one line was missing in the step from strong
$F$-regularity of the pair $(\Spec R_c,\Delta_\psi)$ to that of the algebra
$R_c\langle\psi_c\rangle$. This line, using \eqref{eq:principal-comparison}
and \eqref{eq:tau-detects}, is now in the proof of
Lemma~\ref{lem:compose-theta}.
\item When I asked it to simplify Section~\ref{sec:projective}, it rewrote my
descent through Laurent charts in terms of Cartier maps
(Lemmas~\ref{lem:laurent-maps} and~\ref{lem:laurent-test-ideal},
\eqref{eq:psi-chart} and the gluing in Proposition~\ref{prop:Proj-descent}).

\item In my proof of Theorem~\ref{thm:main}, it filled in 2 details: why
\eqref{eq:generic-condition} follows from the definition of test modules, and
how to combine maps from the factors of a normal ring inside the algebra, in
the proof of Corollary~\ref{cor:normal-principal}.
\item It shortened a few proofs, for example those of Lemma~\ref{lem:iterate}
and Corollary~\ref{thm:intro-affine}. It suggested small changes for
precision: \eqref{eq:source-rule}
and the remark $\phi\in\cC_e(R,\Delta_\phi)$ in
Section~\ref{sec:map-divisor}.
\item \Cref{example-AI-1} and \Cref{example-AI-2} were found by ChatGPT 6.1 Sol for which we take no credit. 
\end{enumerate}

\section{Background}\label{sec:background}

Nothing in this section is new. Here we fix notation and recall the facts we will use. Most of them are in \cite{Blickle2013,SchwedeFAdjunction2009,Schwede2011,SchwedeSmithBook2026}, see
also \cite{BlickleSchwede2013}.

\subsection{Cartier algebras}

Let $R$ be a normal $F$-finite domain,  $X=\Spec R$. The module $F_*^eR$ is
$R$ with scalar multiplication $r\cdot F_*^ex=F_*^e(r^{p^e}x)$.
$\cC_e(R)=\Hom_R(F_*^eR,R)$ is the module of $p^{-e}$-linear maps on $R$,
$\cC(R)=\bigoplus_{e\geq0}\cC_e(R)$. We follow the conventions of
\cite[Definition~2.1 and Remark~2.5]{Blickle2013}. There are $2$ scalar actions
on $\cC_e(R)$: the source action of $F_*^eR$, given by
$((F_*^er)\cdot\phi)(F_*^ex)=\phi(F_*^e(rx))$ and multiplication on the
target, $r\phi=(F_*^er^{p^e})\cdot\phi$. Define product of
$\phi\in\cC_d(R)$ and $\eta\in\cC_f(R)$ by
\begin{equation}\label{eq:cartier-product}
 \phi\st\eta:=\phi\circ F_*^d\eta\in\cC_{d+f}(R),
\end{equation}
as in \cite[Section~3]{Schwede2011}. Unwinding the definitions, for $u,v\in R$
we have
\begin{equation}\label{eq:source-rule}
 \bigl((F_*^du)\cdot\phi\bigr)\st\bigl((F_*^fv)\cdot\eta\bigr)
 =\bigl(F_*^{d+f}(u^{p^f}v)\bigr)\cdot(\phi\st\eta).
\end{equation}
We write $\psi^{[n]}$ for the $n$-fold product of a homogeneous map $\psi$
with itself. $\psi^{[0]}=\operatorname{id}_R$.

A Cartier subalgebra is a graded subring $\cD\subseteq\cC(R)$ with $\cD_0=R$,
i.e.\ a pair $(R,\cD)$ in the sense of \cite[Definition~3.4]{Schwede2011}. Its
homogeneous pieces are stable under the source action, as
$\phi\st r=(F_*^er)\cdot\phi$ for $r\in\cD_0$. For $s>0$ and
$0\neq\psi\in\cC_s(R)$, let $R\langle\psi\rangle$ be the Cartier subalgebra
generated by $R$ and $\psi$. It lives in degrees divisible by $s$. We have
\begin{equation}\label{eq:principal-piece}
 R\langle\psi\rangle_{ns}=(F_*^{ns}R)\cdot\psi^{[n]}\qquad(n\geq1)
\end{equation}
by \cite[Remark~4.3]{Schwede2011}.

\subsection{Big test ideals}

On a domain, a Cartier subalgebra $\cD$ is nondegenerate in the sense of
\cite[Definition~3.8]{Schwede2011} as soon as it contains a nonzero map of
positive degree. For such $\cD$, big test ideal $\tauB(R;\cD)$ is the
smallest ideal $J$ meeting $R^\circ$ with $\phi(F_*^eJ)\subseteq J$ for all
$e>0$ and $\phi\in\cD_e$
\cite[Definitions~3.15--3.16 and Theorem~3.18]{Schwede2011}.

\begin{definition}[{\cite[Definition~3.19]{Schwede2011}}]\label{def:hdbs}
Let $\cD$ be nondegenerate. An element $c\in R^\circ$ is a homogeneously
defined big sharp test element, or HDBS test element for short, for
$(R,\cD)$ if for every $d\in R^\circ$ there are $e>0$ and $\phi\in\cD_e$
with $c\in\phi(F_*^e(dR))$.
\end{definition}

HDBS test elements exist \cite[Proposition~3.21]{Schwede2011}. An HDBS test element times an element of $R^\circ$ is again HDBS test element
\cite[Remark~3.20]{Schwede2011}. For an HDBS test element $c$,
\begin{equation}\label{eq:hdbs-formula}
 \tauB(R;\cD)=\sum_{e\geq0}\ \sum_{\phi\in\cD_e}\phi(F_*^e(cR))
\end{equation}
by the proof of \cite[Theorem~3.18]{Schwede2011}. 
Following \cite[Definitions~3.11 and~3.12]{Schwede2011}, $(R,\cD)$ is sharply
$F$-pure if some $\phi\in\cD_e$ with $e>0$ is surjective. It is strongly
$F$-regular if for every $d\in R^\circ$ there are $e>0$ and $\phi\in\cD_e$ with
$1\in\phi(F_*^e(dR))$. For nondegenerate $\cD$, $\cD'$ and a multiplicative
set $W$ we use \cite[Proposition~3.23]{Schwede2011}:
\begin{align}
 \cD'\subseteq\cD
 &\Longrightarrow\tauB(R;\cD')\subseteq\tauB(R;\cD),\label{eq:monotonicity}\\
 W^{-1}\tauB(R;\cD)
 &=\tauB(W^{-1}R;W^{-1}\cD),\label{eq:localization}\\
 \tauB(R;\cD)=R
 &\Longleftrightarrow (R,\cD)\text{ is strongly $F$-regular}.\label{eq:tau-detects}
\end{align}

\subsection{Map--divisor correspondence}\label{sec:map-divisor}

For an integral Weil divisor $D$ on $X$, $R(D)$ is its divisorial module, i.e. the
fractional ideal of rational functions $f$ with $\operatorname{div}(f)+D\geq0$
\cite[Tag~\stacktag{0EBM}]{stacks-project}. Let $K_X$ be the canonical divisor, $e>0$ and $q=p^e$. Write $\omega_R=R(K_X)$. Frobenius compatibility gives
$\Hom_R(F_*^eR,\omega_R)\cong F_*^e\omega_R$
\cite[Corollary~1.0.3]{BhattBlickleSchwedeTucker2026}.
On the regular locus, $\omega_R$ is invertible. So tensoring with
$\omega_R^{-1}$ and using the projection formula gives
\[
 \begin{aligned}
 \Hom_R(F_*^eR,R)
 &\cong (F_*^e\omega_R)\otimes_R\omega_R^{-1}\\
 &\cong F_*^eR((1-q)K_X).
 \end{aligned}
\]
First and last modules are reflexive. So their isomorphism extends
to all of $X$ \cite[Proposition~2.4(2)--(4)]{SchwedeSmith2010}. Here $F_*^eR$ acts on the left \cite[Section~4, Equation~(10)]{BlickleSchwede2013}. So a nonzero
$\phi\in\cC_e(R)$ gives an effective Weil divisor $D_\phi\sim(1-q)K_X$, and
$\Delta_\phi=D_\phi/(q-1)$ satisfies
\begin{equation}\label{eq:log-trivial}
 \Delta_\phi\geq0,\qquad(q-1)(K_X+\Delta_\phi)\sim0
\end{equation}
\cite[Theorem~3.11]{SchwedeFAdjunction2009}. Locally: if $\Phi$ generates
$\Hom_{R_P}(F_*^eR_P,R_P)$ at a height $1$ prime $P$ and
$\phi=(F_*^eu)\cdot\Phi$ there, then the coefficient of $D_\phi$ along $P$ is
the valuation of $u$ (see proof of
\cite[Theorem~3.11]{SchwedeFAdjunction2009}). In particular, $D_\phi$ does not
depend on the choice of $K_X$. Note $D_\theta=0$ if $\theta$ generates
$\cC_e(R)$ as $F_*^eR$-module.

For a $\Q$-divisor $\Gamma\geq0$, let $\cC_e(R,\Gamma)\subseteq\cC_e(R)$ be
the image of the restriction map from $\Hom_R(F_*^eR(\ceil{(p^e-1)\Gamma}),R)$.
These form a Cartier subalgebra $\cC(R,\Gamma)$. We have
$\tauB(R,\Gamma)=\tauB(R;\cC(R,\Gamma))$
\cite[Remarks~3.10 and~3.17]{Schwede2011}. We call $(\Spec R,\Gamma)$ strongly
$F$-regular, resp.\ sharply $F$-pure, if $(R,\cC(R,\Gamma))$ is so
\cite{HochsterHuneke1989,HaraWatanabe2002,Schwede2008}, also see \cite[Definition~3.1 and Remark~3.2]{SchwedeSmith2010}.
Algebra of one map and the pair it defines have the same test ideal
\begin{equation}\label{eq:principal-comparison}
 \tauB\bigl(R;R\langle\phi\rangle\bigr)=\tauB(R,\Delta_\phi)
\end{equation}
\cite[Remark~4.3 and Corollary~4.8]{Schwede2011}. Moreover, if
$\phi\in\cC_e(R)$ is surjective, then $(\Spec R,\Delta_\phi)$ is sharply
$F$-pure, since $\phi$ itself lies in $\cC_e(R,\Delta_\phi)$
\cite[Theorem~3.11(c),(d)]{SchwedeFAdjunction2009}.

We need $2$ more facts.

\begin{lemma}[{\cite[Exercise~4.12]{BlickleSchwede2013}}]\label{lem:product-divisor}
Let $d,f>0$, $0\neq\phi\in\cC_d(R)$ and $0\neq\eta\in\cC_f(R)$. Then
$\phi\st\eta\neq0$. With the product convention
\eqref{eq:cartier-product} we have $D_{\phi\st\eta}=p^fD_\phi+D_\eta$.
Consequently,
\begin{equation}\label{eq:product-Delta}
 \Delta_{\phi\st\eta}
 =\frac{p^f(p^d-1)}{p^{d+f}-1}\Delta_\phi
  +\frac{p^f-1}{p^{d+f}-1}\Delta_\eta.
\end{equation}
\end{lemma}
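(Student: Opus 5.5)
The plan is to verify all three assertions by working at a height one prime, since both $D_\phi$ and $D_\eta$ are determined by their local coefficients. Non-vanishing comes first. Since $R$ is a domain, $\phi\neq0$ and $\eta\neq0$ are generically nonzero. Localize at the generic point $K=\Frac R$; this commutes with forming $\Hom$, because $F_*^eR$ is finite. There $\cC_e(K)$ is a one-dimensional $F_*^eK$-vector space. Fix a generator $\Phi_1$ of $\cC_1(K)$ and set $\Phi_e=\Phi_1^{[e]}$. Then $\Phi_e$ generates $\cC_e(K)$: surjectivity is preserved under composition, and a nonzero map on a field generates the rank-one module. Write $\phi=(F_*^du)\cdot\Phi_d$ and $\eta=(F_*^fv)\cdot\Phi_f$ with $u,v\neq0$. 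By \eqref{eq:source-rule},
\[
\phi\st\eta=\bigl(F_*^{d+f}(u^{p^f}v)\bigr)\cdot\Phi_{d+f},
\]
which is nonzero because $u^{p^f}v\neq0$.

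For the divisor formula, I would run the same computation over the DVR $R_P$ for each height one prime $P$, this time with $\Phi_e$ a generator of $\Hom_{R_P}(F_*^eR_P,R_P)$. The key input is that for a regular local ring, and in particular a DVR, if $\Phi_1$ generates $\cC_1(R_P)$, then $\Phi_1^{[e]}$ generates $\cC_e(R_P)$. I expect this to be the main point needing justification. One route is through the divisor of the generator being zero, which makes the claim circular. The direct route I would take: $\Phi$ generates $\cC_e(R_P)$ exactly when $\Phi$ is surjective and not of the form $(F_*^e t)\cdot\Phi'$ with $t\in PR_P$. Equivalently, using $\cC_e(R_P)\cong F_*^e\omega\otimes\omega^{-1}$ with $\omega$ trivial on a DVR, a generator is precisely a map sending $F_*^e t^{p^e-1}$ to a unit, where $t$ is a uniformizer. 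For $\Phi_1$ this means $\Phi_1(F_*t^{p-1})$ is a unit. Then
\[
\Phi_1\bigl(F_*\,\Phi_1(F_*^{e-1}t^{p^{e-1}-1})\cdot t^{p-1}\bigr)
=\Phi_1\Bigl(F_*^{e}\,t^{p^{e-1}-1+(p-1)p^{e-1}}\Bigr)\ \text{(up to the inner unit)}
=\Phi_1^{[e]}\bigl(F_*^e t^{p^e-1}\bigr),
\]
which gives the claim by induction. Alternatively, I would cite the standard fact that $\cC(R_P)$ is generated by $\cC_1(R_P)$ for Gorenstein rings, in the form $\cC_e=(F_*^eR)\cdot\Phi_1^{[e]}$.

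Granting this, the coefficient of $D_{\phi\st\eta}$ along $P$ is
\[
v_P(u^{p^f}v)=p^f v_P(u)+v_P(v),
\]
which gives $D_{\phi\st\eta}=p^fD_\phi+D_\eta$. Dividing by $p^{d+f}-1$ and substituting $D_\phi=(p^d-1)\Delta_\phi$ and $D_\eta=(p^f-1)\Delta_\eta$ yields \eqref{eq:product-Delta}. This last step is routine arithmetic.
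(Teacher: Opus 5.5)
Your overall route is the paper's. You localize at a height one prime $P$, write $\phi$ and $\eta$ as source multiples of iterates of one generator of $\cC_1(R_P)$, apply \eqref{eq:source-rule} to get the source coefficient $u^{p^f}v$, and read off valuations. Your generic-point argument for $\phi\st\eta\neq0$ is also fine; the paper just uses that $\eta$ is surjective over $\Frac R$. The fallback you mention, citing that iterates of a generator generate in every degree, is exactly what the paper does via \cite[Lemma~3.9]{SchwedeFAdjunction2009}.

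The direct argument you give for that step, however, fails as written whenever the residue field $\kappa(P)$ is imperfect. That is the usual situation at height one primes once $\dim R\geq 2$. Only one direction of your criterion is true: if $\Phi(F_*^et^{p^e-1})$ is a unit then $\Phi$ generates, because a source multiple by an element of $tR_P$ sends $F_*^et^{p^e-1}$ into $tR_P$. A generator, though, need not send $F_*^et^{p^e-1}$ to a unit; the correct condition is $\Phi(F_*^e(t^{p^e-1}R_P))=R_P$. For the same reason, ``up to the inner unit'' is not legitimate, because a unit inside $F_*$ cannot be pulled out.

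Here is a concrete failure. In $R_P=\mathbb F_p(s)[t]_{(t)}$, let $\Phi_g$ be the map dual to $F_*(s^{p-1}t^{p-1})$ in the basis $F_*(s^at^b)$, $0\leq a,b<p$. It generates but kills $F_*t^{p-1}$. The map $\Phi=(F_*s^{2p-1})\cdot\Phi_g$ also generates, with $\Phi(F_*t^{p-1})=s$ a unit, yet $\Phi^{[2]}(F_*^2t^{p^2-1})=\Phi(F_*(st^{p-1}))=0$.

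The repair is short. A generator $\Phi_1$ maps $F_*(t^{p-1}R_P)$ onto $R_P$, since otherwise $\Phi_1\in(F_*t)\cdot\cC_1(R_P)$. Moreover, any $y$ with $\Phi_1(F_*(t^{p-1}y))$ a unit is itself a unit. So after a unit source multiple you may assume $\Phi_1(F_*t^{p-1})=1$. Then your induction gives $\Phi_1^{[e]}(F_*^et^{p^e-1})=\Phi_1(F_*(t^{p-1}\cdot 1))=1$ exactly. The valid direction then shows that every $\Phi_1^{[e]}$ generates, which is all the rest of your argument uses.
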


Localize at a height $1$ prime $P$. Choose a generator $T$ of $\Hom_{R_P}(F_*R_P,R_P)$.
Its iterates generate in every degree
\cite[Lemma~3.9]{SchwedeFAdjunction2009}. So write
$\phi=(F_*^du)\cdot T^{[d]}$ and
$\eta=(F_*^fv)\cdot T^{[f]}$ there.
\eqref{eq:source-rule} gives the source coefficient
$u^{p^f}v$. Hence $D_{\phi\st\eta}=p^fD_\phi+D_\eta$; compare the proof of
\cite[Lemma~4.9(i)]{SchwedeSmith2010}.
 Composition is nonzero because $\eta$ is surjective over $\Frac R$. The coefficients in
\eqref{eq:product-Delta} add up to $1$. So $\Delta_{\phi\st\eta}$ is a weighted average of $\Delta_\phi$ and $\Delta_\eta$. 

Second fact lets us go from sharp $F$-purity to strong $F$-regularity by making the boundary smaller.

\begin{lemma}[{\cite[Lemma~4.9(iii)]{SchwedeSmith2010}}]\label{lem:SS-convexity}
If $(\Spec R,\Gamma)$ is strongly $F$-regular and $(\Spec R,\Gamma+E)$ is
sharply $F$-pure for some $E\geq0$, then $(\Spec R,\Gamma+\lambda E)$ is
strongly $F$-regular for every rational $0<\lambda<1$.
\end{lemma}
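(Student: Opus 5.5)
The plan is to build, for each $d\in R^\circ$, one explicit map in $\cC(R,\Gamma+\lambda E)$ that sends $F_*(dR)$ onto something containing $1$. The map will be a composite $\psi^{[n]}\st\phi$. Here $\phi$ comes from strong $F$-regularity of $(\Spec R,\Gamma)$, and $\psi$ comes from sharp $F$-purity of $(\Spec R,\Gamma+E)$. Divisors will be controlled by Lemma~\ref{lem:product-divisor}.

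\emph{Construction.} Sharp $F$-purity gives $f>0$ and a surjective $\psi\in\cC_f(R,\Gamma+E)$. Its iterates $\psi^{[n]}\in\cC_{nf}(R)$ are still surjective. By Lemma~\ref{lem:product-divisor} applied repeatedly,
\[
D_{\psi^{[n]}}=\tfrac{p^{nf}-1}{p^f-1}D_\psi\geq (p^{nf}-1)(\Gamma+E).
\]
Here we use that the restriction of $\psi$ to $R(\ceil{(p^f-1)(\Gamma+E)})$ means $D_\psi\geq\ceil{(p^f-1)(\Gamma+E)}$. Now fix $d\in R^\circ$. Strong $F$-regularity of $(\Spec R,\Gamma)$ gives $e>0$ and $\phi\in\cC_e(R,\Gamma)$ with $1\in\phi(F_*^e(dR))$; in particular $D_\phi\geq(p^e-1)\Gamma$. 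Set $\Psi_n=\psi^{[n]}\st\phi=\psi^{[n]}\circ F_*^{nf}\phi\in\cC_{nf+e}(R)$. Then
\[
\Psi_n\bigl(F_*^{nf+e}(dR)\bigr)\supseteq\psi^{[n]}\bigl(F_*^{nf}R\bigr)=R,
\]
so $1$ lies in the image of $F_*(dR)$.

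\emph{Divisor estimate.} Lemma~\ref{lem:product-divisor} gives
\[
D_{\Psi_n}=p^eD_{\psi^{[n]}}+D_\phi\geq p^e(p^{nf}-1)(\Gamma+E)+(p^e-1)\Gamma.
\]
Put $N=nf+e$. The right-hand side equals $(p^N-1)\Gamma+(p^N-p^e)E$. Since $\lambda<1$, we have $p^N-p^e\geq(p^N-1)\lambda$ once $n\gg0$, and this $n$ may depend on $e$ and hence on $d$, which is harmless. So $D_{\Psi_n}\geq(p^N-1)(\Gamma+\lambda E)$. Because $D_{\Psi_n}$ is an integral divisor, we even get $D_{\Psi_n}\geq\ceil{(p^N-1)(\Gamma+\lambda E)}$.

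\emph{Conclusion and main obstacle.} By the map--divisor correspondence of Section~\ref{sec:map-divisor}, this inequality says $\Psi_n$ extends to $F_*^NR(\ceil{(p^N-1)(\Gamma+\lambda E)})\to R$, that is, $\Psi_n\in\cC_N(R,\Gamma+\lambda E)$. Since $d$ was arbitrary, $(\Spec R,\Gamma+\lambda E)$ is strongly $F$-regular. The only delicate step is this last translation from a divisor inequality to membership in $\cC_N(R,\Gamma+\lambda E)$. I would check it at height one primes using the local description $\phi=(F_*^eu)\cdot\Phi$, and then extend by reflexivity. The order of composition is also essential: $\psi^{[n]}$ must be applied last, so that the weight of $\Gamma+E$ tends to $1$ rather than to $0$.
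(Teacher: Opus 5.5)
Your proof is correct. The paper gives no argument of its own for this lemma; it imports it from \cite[Lemma~4.9(iii)]{SchwedeSmith2010}, consistent with its statement that nothing in Section~\ref{sec:background} is new. So you are supplying a self-contained proof rather than reproducing the paper's.

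Your proof uses only the paper's own toolkit:
\begin{enumerate}
\item Lemma~\ref{lem:product-divisor}, and the identity $D_{\psi^{[n]}}=\frac{p^{nf}-1}{p^f-1}D_\psi$ that it yields;
\item the translation ``$0\neq\phi\in\cC_e(R,\Gamma)$ iff $D_\phi\geq\ceil{(p^e-1)\Gamma}$''.
\end{enumerate}
You correctly reduce the second point to the height-one computation with $\phi=(F_*^eu)\cdot\Phi$, together with $R=\bigcap_PR_P$.

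Your argument mirrors the bookkeeping in Lemma~\ref{lem:compose-theta}. There, the generator $\theta$ is composed on the inside to scale a sharply $F$-pure boundary down by a factor $\lambda<1$. Here, the iterated surjection $\psi^{[n]}$ is placed on the outside. It therefore keeps the value $1$ produced by $\phi$, and it gives $\Delta_{\Psi_n}\geq\Gamma+\frac{p^N-p^e}{p^N-1}E$, where the coefficient of $E$ tends to $1$. Your version also shows that rationality of $\lambda$ only serves to keep $\Gamma+\lambda E$ a $\Q$-divisor. The citation buys brevity; your argument buys transparency and independence from the global setting of \cite{SchwedeSmith2010}.

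One small correction to your closing remark. With the opposite order $\phi\st\psi^{[n]}$, the weight on $\Gamma+E$ tends to $p^{-e}$, not to $0$. That order actually fails for two reasons. First, $p^{-e}\leq 1/2$, which is too small once $\lambda>1/2$. Second, $1\in(\phi\st\psi^{[n]})(F_*^N(dR))$ is no longer guaranteed, since $\psi^{[n]}(F_*^{nf}(dR))$ need not contain $d$.
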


\subsection{Test modules}\label{sec:test-modules}

In this subsection $R$ is a reduced $F$-finite ring, and
$\mathscr B=\bigoplus_{e\geq0}\mathscr B_e$ is a Cartier algebra over $R$ acting on an ideal $I\subseteq R$
\cite[Chapter~5, Definitions~8.1 and~8.4]{SchwedeSmithBook2026}, see also
\cite[Definition~2.2]{Blickle2013}. Here $\mathscr B_0=R$ acts by multiplication and $\phi\in\mathscr B_e$ acts by an $R$-linear map $F_*^eI\to I$,
$F_*^ex\mapsto\phi(F_*^ex)$. This action need not extend to $R$.
Products, iterates, source multiples and the subalgebra $R\langle\phi\rangle$
generated by $R$ and $\phi$ are as in Section~\ref{sec:background}. Test module $\tau(I,\mathscr B)$ is the smallest
$\mathscr B$-submodule $N\subseteq I$ such that $H^0_Q(N)_Q\to H^0_Q(I_Q)$ is a
nil-isomorphism for every associated prime $Q$ of $I$
\cite[Chapter~5, Definition~8.17]{SchwedeSmithBook2026}, see also
\cite{BlickleStabler2019}. It exists by
\cite[Chapter~5, Theorem~8.28]{SchwedeSmithBook2026},
\cite{Blickle2013}.

As $R$ is reduced, every associated prime $Q$ of $I$ is a minimal prime of
$R$. So $R_Q$ is a field, $H^0_Q(N)_Q=N_Q$ and $I_Q=R_Q$. In particular $N_Q$
is $0$ or $I_Q$. Let $\mathscr B_+=\bigoplus_{e>0}\mathscr B_e$. If some homogeneous map in $\mathscr B_+$ is nonzero on $I_Q$, it is surjective, as a nonzero
$p^{-e}$-linear map on a field. So are its iterates. Then no power of
$\mathscr B_+$ kills $I_Q$. So $N_Q\to I_Q$ is a nil-isomorphism iff
$N_Q=I_Q$. Otherwise $\mathscr B_+$ kills $I_Q$ and $N_Q\to I_Q$ is always a
nil-isomorphism. So, with
\[
 \Sigma(\mathscr B,I)=\{Q\in\operatorname{Min}R:\ \mathscr B_+\text{ acts
 nontrivially on }I_Q\},
\]
$\tau(I,\mathscr B)$ is the smallest $\mathscr B$-submodule $N\subseteq I$ with
\begin{equation}\label{eq:generic-condition}
 N_Q=I_Q\qquad\text{for all }Q\in\Sigma(\mathscr B,I).
\end{equation}
Also,
\begin{equation}\label{eq:generic-vanish}
 \tau(I,\mathscr B)_Q=0\qquad\text{for }Q\in\operatorname{Min}R\setminus
 \Sigma(\mathscr B,I).
\end{equation}
The elements of $I$ which vanish in $R_Q$ for all these $Q$ form a
$\mathscr B$-submodule $N_0$. For $P\in\Sigma(\mathscr B,I)$ pick $g$ in all
other minimal primes but not in $P$. Then $gI\subseteq N_0$ and $(gI)_P=I_P$.
So $N_0$ satisfies \eqref{eq:generic-condition}.
$\tau(I,\mathscr B)\subseteq N_0$. If $\mathscr B'\subseteq\mathscr B$ is a
Cartier subalgebra, then $\Sigma(\mathscr B',I)\subseteq\Sigma(\mathscr B,I)$, and
\eqref{eq:generic-condition} gives
\begin{equation}\label{eq:module-monotone}
 \tau(I,\mathscr B')\subseteq\tau(I,\mathscr B)
\end{equation}
\cite[Chapter~5, Exercise~8.1]{SchwedeSmithBook2026}.

For a normal domain $R$, $I=R$ and $\mathscr B\subseteq\cC(R)$ with
$\mathscr B_e\neq0$ for some $e>0$, condition \eqref{eq:generic-condition}
just says $N\neq0$. So $\tau(R,\mathscr B)=\tauB(R;\mathscr B)$. A normal $F$-finite ring $S$ is a
finite product of normal domains $S_i$
\cite[Tag~\stacktag{0357}]{stacks-project}. Maps in $\cC(S)$ preserve
the factors, as idempotents $\epsilon$ satisfy $\epsilon^{p^e}=\epsilon$. If a
Cartier subalgebra $\cE\subseteq\cC(S)$ has a nonzero map of positive degree
on every factor, we put $\tauB(S;\cE)=\prod_i\tauB(S_i;\cE_i)$, where $\cE_i$
is the image of $\cE$ in $\cC(S_i)$. This is the smallest $\cE$-stable ideal
of $S$ which is nonzero on every factor.

\section{Normal domains}\label{sec:affine}

Throughout this section $R$ is a normal $F$-finite domain and
$\cE\subseteq\cC(R)$ is a Cartier subalgebra (we keep the letter $D$ for
degrees). We follow Section~\ref{sec:idea}: first we prove principalization
when $\cE_1\neq0$ (Proposition~\ref{prop:principal-degree-one}). Then we
remove this assumption (Corollary~\ref{cor:normal-principal}).

\subsection{Test element for one map}

To show that a value of $\psi$ lies in $\tauB(R;R\langle\psi\rangle)$, we need
a test element for $R\langle\psi\rangle$ which we can control. Next lemma is
\cite[Lemma~5.7]{Schwede2011} with a weaker hypothesis: there $R_c$ is regular
and $\psi$ generates $\Hom_{R_c}(F_*^sR_c,R_c)$. Here we only assume that
$(R_c,R_c\langle\psi_c\rangle)$ is strongly $F$-regular. We need this, because
our final map need not generate after localization.

\begin{lemma}[{cf.~\cite[Proposition~6.14]{SchwedeFAdjunction2009} and \cite[Lemma~5.7]{Schwede2011}}]\label{lem:principal-test-element}
Let $0\neq\psi\in\cE_s$ with $s>0$ and put $\cA=R\langle\psi\rangle$.
Suppose that $c\in R^\circ$, that $(R_c,\cA_c)$ is strongly $F$-regular,
and that
\begin{equation}\label{eq:principal-fixed-q}
 c\in\psi(F_*^s(cR)).
\end{equation}
Then $c^2$ is an HDBS test element for $(R,\cA)$. We have
\begin{equation}\label{eq:principal-tau-formula-q}
 \tauB(R;\cA)=\sum_{t\geq0}\psi^{[t]}(F_*^{ts}(c^2R)).
\end{equation}
\end{lemma}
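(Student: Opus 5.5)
The plan is to verify \Cref{def:hdbs} for $c^2$ directly: for every $d\in R^\circ$ we want $n\geq0$ with $c^2\in\psi^{[n]}(F_*^{ns}(dR))$. Formula \eqref{eq:principal-tau-formula-q} will then follow from \eqref{eq:hdbs-formula} together with \eqref{eq:principal-piece}. Since $\psi\neq0$ has positive degree, $\cA$ is nondegenerate, so \eqref{eq:hdbs-formula} applies.

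\emph{Step 1: iterating the fixed condition.} We show by induction that $c\in\psi^{[m]}(F_*^{ms}(cR))$ for every $m\geq1$. Suppose $c=\psi^{[m]}(F_*^{ms}(cx))$. By \eqref{eq:principal-fixed-q}, write $c=\psi(F_*^s(cy))$. Then
\[
 cx=x\,\psi(F_*^s(cy))=\psi\bigl(F_*^s(x^{p^s}cy)\bigr),
\]
and substituting gives $c=\psi^{[m]}\bigl(F_*^{ms}\psi(F_*^s(c\,x^{p^s}y))\bigr)$. By \eqref{eq:cartier-product} this equals $\psi^{[m]}\st\psi=\psi^{[m+1]}$ evaluated on $F_*^{(m+1)s}(c\,x^{p^s}y)$, which completes the induction. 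Using target linearity once more, $c\cdot\psi^{[m]}(F_*^{ms}z)=\psi^{[m]}(F_*^{ms}(c^{p^{ms}}z))$. Hence
\[
 c^2\in\psi^{[m]}\bigl(F_*^{ms}(c^{1+p^{ms}}R)\bigr)\qquad\text{for all }m\geq1.
\]

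\emph{Step 2: using strong $F$-regularity on $R_c$.} Fix $d\in R^\circ$. Its image in $R_c$ lies in $R_c^\circ$. By the definition of strong $F$-regularity and \eqref{eq:principal-piece} applied to $\cA_c$, there are $n\geq1$ and $r\in R_c$ with
\[
 1=\psi^{[n]}_c\bigl(F_*^{ns}(d\,r)\bigr).
\]
Write $r=r'/c^k$ with $r'\in R$. Multiplying by $c^{N}$ with $p^{ns}\mid N$ and $N/p^{ns}\geq k$ is the same as multiplying the source by $c^{Np^{ns}}$. This clears the denominator, and we obtain an equality in $R_c$ whose two sides both lie in $R$. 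Since $R$ is a domain, $R\to R_c$ is injective, so the equality holds in $R$. Thus
\[
 c^{N}\in\psi^{[n]}\bigl(F_*^{ns}(dR)\bigr)
\]
for some $N$. This image is an $R$-submodule, so it contains $c^NR$. This denominator-clearing step is the only place where care is needed; the rest is bookkeeping with \eqref{eq:cartier-product} and \eqref{eq:source-rule}.

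\emph{Step 3: combining.} Choose $m$ with $p^{ms}\geq N$. Then $c^{1+p^{ms}}R\subseteq c^NR\subseteq\psi^{[n]}(F_*^{ns}(dR))$. By Step 1,
\[
 c^2\in\psi^{[m]}\Bigl(F_*^{ms}\psi^{[n]}\bigl(F_*^{ns}(dR)\bigr)\Bigr)=\psi^{[m+n]}\bigl(F_*^{(m+n)s}(dR)\bigr).
\]
So $c^2$ is an HDBS test element for $(R,\cA)$.

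Finally, we apply \eqref{eq:hdbs-formula} with the test element $c^2$. By \eqref{eq:principal-piece}, a map in $\cA_{ts}$ has the form $(F_*^{ts}u)\cdot\psi^{[t]}$. Its image on $F_*^{ts}(c^2R)$ is $\psi^{[t]}(F_*^{ts}(uc^2R))\subseteq\psi^{[t]}(F_*^{ts}(c^2R))$, with equality when $u=1$. The term $t=0$ gives $c^2R$. Hence $\tauB(R;\cA)=\sum_{t\geq0}\psi^{[t]}(F_*^{ts}(c^2R))$, which is \eqref{eq:principal-tau-formula-q}.
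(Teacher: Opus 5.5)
Your proof is correct and follows the paper's argument essentially step for step. Your Step~2 is the paper's Claim~1 (clear the denominator after using strong $F$-regularity on $R_c$), your Step~1 is its Claim~2 (iterate \eqref{eq:principal-fixed-q} and multiply by $c$ to get $c^2\in\psi^{[l]}(F_*^{ls}(c^{p^{ls}+1}R))$), and the combination and the derivation of \eqref{eq:principal-tau-formula-q} from \eqref{eq:hdbs-formula} coincide. Only cosmetic points remain: the HDBS definition needs $n\geq1$ rather than $n\geq0$ (your final exponent $m+n$ is positive anyway), and in Step~2 only $Np^{ns}\geq k$ is needed, so the divisibility condition $p^{ns}\mid N$ is superfluous.
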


\begin{proof}
\setcounter{claim}{0}
Fix $d\in R^\circ$. We have to find $n\geq1$ with
$c^2\in\psi^{[n]}(F_*^{ns}(dR))$. We follow the idea of \cite[Lemma~5.7]{Schwede2011}. After inverting $c$ we reach a power of $c$ from $d$.
\eqref{eq:principal-fixed-q} brings it down to $c^2$. Only Claim~\ref{cl:power-of-c} uses that $(R_c,\cA_c)$ is strongly
$F$-regular.

\begin{claim}\label{cl:power-of-c}
There are $n\geq1$ and $k\geq0$ with $c^k\in\psi^{[n]}(F_*^{ns}(dR))$.
\end{claim}

\begin{proof}[Proof of Claim~\ref{cl:power-of-c}]
By \eqref{eq:principal-piece}, maps of positive degree in $\cA_c$ are source
multiples of iterates of $\psi_c$. So strong $F$-regularity of $(R_c,\cA_c)$
gives $n\geq1$ and $y\in R_c$ with $\psi_c^{[n]}(F_*^{ns}(dy))=1$. Write
$y=z/c^j$ with $z\in R$. Take $k\geq0$ with $kp^{ns}\geq j$. Then
\[
 \psi^{[n]}\bigl(F_*^{ns}(dz\,c^{kp^{ns}-j})\bigr)
 =c^k\,\psi_c^{[n]}\bigl(F_*^{ns}(dy)\bigr)=c^k
\]
in $R_c$, hence in $R$.
\end{proof}

\begin{claim}\label{cl:c-squared}
Let $l\geq1$. Then $c\in\psi^{[l]}(F_*^{ls}(cR))$, hence
$c^2\in\psi^{[l]}(F_*^{ls}(c^{p^{ls}+1}R))$.
\end{claim}

\begin{proof}[Proof of Claim~\ref{cl:c-squared}]
For $l=1$, first assertion is \eqref{eq:principal-fixed-q}. If
$c=\psi^{[l]}(F_*^{ls}(cx))$ with $x\in R$, then $cx$ lies in the ideal
$\psi(F_*^s(cR))$ by \eqref{eq:principal-fixed-q}. So
$c\in\psi^{[l]}(F_*^{ls}\psi(F_*^s(cR)))=\psi^{[l+1]}(F_*^{(l+1)s}(cR))$.
For second assertion, multiply $c=\psi^{[l]}(F_*^{ls}(cx))$ by $c$, which
gives $c^2=\psi^{[l]}(F_*^{ls}(c^{p^{ls}+1}x))$.
\end{proof}

Now we put the claims together. Take $n,k$ as in Claim~\ref{cl:power-of-c}
and $l\geq1$ with $p^{ls}+1\geq k$. Then Claim~\ref{cl:c-squared} gives
$c^2\in\psi^{[l]}(F_*^{ls}(c^kR))$. As $\psi^{[n]}(F_*^{ns}(dR))$ is an ideal
containing $c^k$, we get
\[
 c^2\in\psi^{[l]}\bigl(F_*^{ls}(c^kR)\bigr)
    \subseteq\psi^{[l]}\bigl(F_*^{ls}\psi^{[n]}(F_*^{ns}(dR))\bigr)
    =\psi^{[l+n]}\bigl(F_*^{(l+n)s}(dR)\bigr).
\]
So $c^2$ is HDBS for $(R,\cA)$. Finally, \eqref{eq:hdbs-formula} for $c^2$ is
\eqref{eq:principal-tau-formula-q} as maps in
$\cA$ are source multiples of the $\psi^{[t]}$. Source multiples do not
make $\psi^{[t]}(F_*^{ts}(c^2R))$ bigger.
\end{proof}

\subsection{Common test element}

Next we fix a map $\theta$ of degree $1$ and a test element $c$ with
$c\in\theta(F_*(cR))$. We include this well-known proof as Section~\ref{sec:projective} needs a graded version.

\begin{lemma}[{cf.~\cite[Section~6]{HochsterHuneke1990} and proof of \cite[Proposition~5.6]{Schwede2011}}]\label{lem:common-data}
Let $0\neq\theta\in\cE_1$. There is $c\in R^\circ$ such that $R_c$ is
regular, $c$ is HDBS for $(R,\cE)$ and $c=\theta(F_*(ch))$ for some $h\in R$. Moreover, for every $e\geq1$,
\begin{equation}\label{eq:full-after-localization}
 (\cE_e)_c=\Hom_{R_c}(F_*^eR_c,R_c)=(F_*^eR_c)\cdot\theta_c^{[e]}.
\end{equation}
Every nonzero multiple of $c$ has these properties, with a possibly
different $h$.
\end{lemma}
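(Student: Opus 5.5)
We prove Lemma~\ref{lem:common-data} by finding one $c_0$ that handles regularity, the generator condition, and the HDBS property, and then adjusting it to obtain $c=\theta(F_*(ch))$.

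\emph{Generic choices.} Since $R$ is an excellent normal domain, its regular locus is open and nonempty, so there is $0\neq c_1$ with $R_{c_1}$ regular. Over the fraction field $K$, $\Hom_K(F_*K,K)$ is a free $F_*K$-module of rank one, and $\theta\neq0$ generates it. Hence we can choose $0\neq c_2$ such that $\Hom_{R_{c_2}}(F_*R_{c_2},R_{c_2})$ is free of rank one over $F_*R_{c_2}$ (rank-one reflexive, generically free, so free on a basic open) and is generated by $\theta$. Put $c_3=c_1c_2$. By \cite[Lemma~3.9]{SchwedeFAdjunction2009}, on the regular ring $R_{c_3}$ the iterates $\theta^{[e]}$ generate $\Hom(F_*^eR_{c_3},R_{c_3})$. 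Because $\cE$ is a subalgebra, $\cE_e$ contains $\theta^{[e]}$ and all its source multiples, while $\cE_e\subseteq\cC_e(R)$. Localizing then gives \eqref{eq:full-after-localization} for every multiple of $c_3$.

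\emph{HDBS.} Fix an HDBS element $c_4$ for $(R,\cE)$, which exists by \cite[Proposition~3.21]{Schwede2011}. Set $c_0=c_3c_4$. By \cite[Remark~3.20]{Schwede2011}, $c_0$ and all its nonzero multiples are HDBS. Regularity and \eqref{eq:full-after-localization} are also preserved under passing to multiples, since localizing further keeps both properties.

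\emph{The condition $c\in\theta(F_*(cR))$.} This is the step needing care. Since $\theta_{c_0}$ generates on the regular ring $R_{c_0}$, it is surjective there: a generator of $\Hom(F_*R_{c_0},R_{c_0})$ is surjective because $F_*R_{c_0}$ is locally free with a free summand. So $1=\theta(F_*(y/c_0^{j}))$ for some $y\in R$. Clearing denominators as in Claim~\ref{cl:power-of-c} gives $c_0^k=\theta(F_*z)$ with $z\in R$. Then for any $m$,
\[
c_0^{m+k}=\theta\bigl(F_*(c_0^{pm}z)\bigr).
\]
Put $c=c_0^{m+k}$ and choose $m$ with $pm\geq m+k$. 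Then $c_0^{pm}z=c\cdot c_0^{pm-m-k}z$, so $c=\theta(F_*(ch))$ with $h=c_0^{pm-m-k}z$.

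\emph{Final assertion.} For a nonzero multiple $c'=cr$, write $c'=r\,\theta(F_*(ch))=\theta(F_*(r^{p}ch))=\theta(F_*(c'\,r^{p-1}h))$. So the same property holds with $h'=r^{p-1}h$. Together with the stability of the other properties under multiples noted above, this proves the lemma. The main obstacle is only bookkeeping: ensuring that $\theta$ generates after localization. Even that is standard, since both sides are rank one and agree at the generic point.
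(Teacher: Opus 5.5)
Your proof is correct, and its skeleton matches the paper's. You use an open locus where $R$ is regular and $\theta$ generates, surjectivity of $\theta$ there, clearing denominators with a $p$th power, and $r^{p-1}h$ for multiples.

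The genuine difference is the HDBS property. You import an arbitrary HDBS element $c_4$ from \cite[Proposition~3.21]{Schwede2011} and multiply it in. The paper instead gets HDBS from the construction itself: once $\theta$ generates over $R_{c_0}$ and $c_0\in\theta(F_*(c_0R))$, \cite[Lemma~5.7]{Schwede2011} makes $c=c_0^2$ HDBS already for $R\langle\theta\rangle$, hence for $\cE$. Your route is more modular, since it keeps the HDBS property separate from the fixed-point condition $c=\theta(F_*(ch))$. The paper's route pays off later: its $c$ is a power $b^{2k}$ of a single element $b$ chosen only for regularity and generation. That is exactly what Lemma~\ref{lem:homogeneous-common-data} uses to get a homogeneous $c$ just by choosing $b$ homogeneous. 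With your $c=(c_1c_2c_4)^{m+k}$ you would also need a homogeneous HDBS element, which the cited existence result does not supply.

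Two small bookkeeping points. First, you need $m+k\geq1$, say by taking $m\geq1$. If $\theta$ is already surjective on $R$ you may have $k=0$, and then $m=0$ gives $c=1$, which need not be HDBS nor have $R_c$ regular. Second, your surjectivity argument for $\theta_{c_0}$ should be read locally. A locally free module need not have a global free summand, but surjectivity is local and generation by $\theta$ localizes. Alternatively, use that $R_{c_0}\to F_*R_{c_0}$ splits, as the paper does.
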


\begin{proof}
\setcounter{claim}{0}
We first find an open set where $R$ is regular and $\theta$ generates. Then we turn a function which vanishes outside this set into $c$.

\begin{claim}\label{cl:good-locus}
There is $b\in R^\circ$ such that $R_b$ is regular and
\begin{equation}\label{eq:theta-generates-b}
 \Hom_{R_b}(F_*R_b,R_b)=(F_*R_b)\cdot\theta_b.
\end{equation}
\end{claim}

\begin{proof}[Proof of Claim~\ref{cl:good-locus}]
Both conditions hold at the generic point. And both conditions are open.
For generation, the cokernel of $F_*R\to\Hom_R(F_*R,R)$,
$x\mapsto x\cdot\theta$, is a finite module. It vanishes at the generic
point, as Hom out of a finitely presented module commutes with localization
\cite[Tag~\stacktag{0583}]{stacks-project}. Regular locus of $R$ is open. So we can take $b\neq0$ with $D(b)$ inside both open sets.
\end{proof}

\begin{claim}\label{cl:fixed-element}
For $b$ as in Claim~\ref{cl:good-locus} there are $k\geq1$ and $h_0\in R$
such that $c_0=b^k$ satisfies $\theta(F_*(c_0h_0))=c_0$.
\end{claim}

\begin{proof}[Proof of Claim~\ref{cl:fixed-element}]
As $R_b$ is regular, $F_*R_b$ is faithfully flat \cite{Kunz1969}. So $R_b\to F_*R_b$ is pure. It
splits because its cokernel is finitely presented
\cite[Tag~\stacktag{058L}]{stacks-project}. By \eqref{eq:theta-generates-b}
the splitting is a source multiple of $\theta_b$. So $\theta_b(F_*u)=1$ for
some $u=y/b^N$ with $y\in R$. Now we clear the denominator with a $p$th
power. Choose $k\geq1$ with $k(p-1)\geq N$ and put $h_0=b^{k(p-1)-N}y$.
Then $c_0h_0=b^{kp}u$. So $\theta(F_*(c_0h_0))=b^k\theta_b(F_*u)=c_0$.
\end{proof}

Next we check that $c=c_0^2$ and $h=c_0^{p-1}h_0$ works. First,
$\theta(F_*(ch))=c_0\,\theta(F_*(c_0h_0))=c$. Next, $R_c=R_b$ is regular. The iterates $\theta_c^{[e]}$ generate
$\Hom_{R_c}(F_*^eR_c,R_c)$ by \eqref{eq:theta-generates-b} and
\cite[Lemma~3.9]{SchwedeFAdjunction2009}. As $\cE_e$ contains their source
multiples, this gives \eqref{eq:full-after-localization}. Finally, $\theta$
generates over $R_{c_0}=R_b$ and $c_0\in\theta(F_*(c_0R))$ by
Claim~\ref{cl:fixed-element}. So $c=c_0^2$ is HDBS for $R\langle\theta\rangle$
by \cite[Lemma~5.7]{Schwede2011}, hence also for $\cE$.

For a nonzero multiple, $gc$ is again HDBS
\cite[Remark~3.20]{Schwede2011}. $R_{gc}$ is a localization of $R_c$. It is therefore regular. \eqref{eq:full-after-localization} holds for $gc$. Finally,
$\theta(F_*((gc)g^{p-1}h))=g\,\theta(F_*(ch))=gc$.
\end{proof}

Composing with a source multiple of $\theta$ raises the degree of a map
without changing its value on a power of $c$. So we can move finitely many
values into one degree. We do that next.

\begin{lemma}[{cf.~proof of \cite[Lemma~5.7]{Schwede2011}}]\label{lem:synchronization}
Let $\theta,c$ be as in Lemma~\ref{lem:common-data}, let $L\geq1$,
and let $a_1,\ldots,a_m\in\tauB(R;\cE)$. There are $D>0$ and
$\alpha_i\in\cE_D$ with $\alpha_i(F_*^Dc^L)=a_i$ for $1\leq i\leq m$.
\end{lemma}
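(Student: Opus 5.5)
We prove the statement in two steps. First we show that each individual $a_i$ is a finite sum of values of homogeneous maps of $\cE$ on $c^L$. Then we push all these maps up to one common degree $D$ by composing with source multiples of $\theta$.

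\emph{Step 1.} By Lemma~\ref{lem:common-data}, every nonzero multiple of $c$ is again HDBS for $(R,\cE)$. In particular $c^L$ is HDBS. So \eqref{eq:hdbs-formula} applied to $c^L$ gives
\[
 a_i=\sum_{j}\phi_{ij}\bigl(F_*^{e_{ij}}(c^Lr_{ij})\bigr)
\]
for finitely many $\phi_{ij}\in\cE_{e_{ij}}$ and $r_{ij}\in R$. Replacing $\phi_{ij}$ by the source multiple $(F_*^{e_{ij}}r_{ij})\cdot\phi_{ij}\in\cE_{e_{ij}}$, we may take all $r_{ij}=1$. Terms with $e_{ij}=0$ are just multiplication by elements of $R$, so they have value in $c^LR$. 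We will handle them like the others once we have a degree-raising map that fixes $c^L$.

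\emph{Step 2.} We build, for each $e\geq1$, a map $\theta_e\in\cE_e$ with $\theta_e(F_*^ec^L)=c^L$. By Lemma~\ref{lem:common-data} applied to the multiple $c^L$ (a nonzero multiple of $c$), there is $h_L$ with $\theta(F_*(c^Lh_L))=c^L$. Put $\theta_1=(F_*h_L)\cdot\theta\in\cE_1$, so $\theta_1(F_*c^L)=c^L$, and define $\theta_e=\theta_1^{[e]}$. By induction, $\theta_e(F_*^ec^L)=\theta_1^{[e-1]}(F_*^{e-1}\theta_1(F_*c^L))=c^L$.

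Now set $D=1+\max e_{ij}$ and define
\[
 \alpha_i=\sum_j \phi_{ij}\st\theta_{D-e_{ij}}\in\cE_D,
\]
where a degree-$0$ term $r\in R$ is read as $r\,\theta_D$, which is again in $\cE_D$. Then
\[
 \bigl(\phi_{ij}\st\theta_{D-e_{ij}}\bigr)(F_*^Dc^L)=\phi_{ij}\bigl(F_*^{e_{ij}}\theta_{D-e_{ij}}(F_*^{D-e_{ij}}c^L)\bigr)=\phi_{ij}(F_*^{e_{ij}}c^L),
\]
so summing over $j$ gives $\alpha_i(F_*^Dc^L)=a_i$. Since $D-e_{ij}\geq1$ for every term, all compositions are legitimate and $D>0$.

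The only delicate point is making the degree-raising maps fix $c^L$ exactly, rather than $c$. This is where the "every nonzero multiple" clause of Lemma~\ref{lem:common-data} is needed. Once $\theta_1(F_*c^L)=c^L$ holds, everything else is bookkeeping with \eqref{eq:cartier-product}.
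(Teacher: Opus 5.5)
Your proof is correct and is essentially the paper's argument. The paper's degree-raising map $\rho=(F_*(c^{(p-1)(L-1)}h))\cdot\theta$ is exactly your $\theta_1$, since the $h_L$ supplied by the last clause of Lemma~\ref{lem:common-data} for the multiple $c^L$ is $c^{(p-1)(L-1)}h$. The paper likewise writes $a_i=\sum_j\phi_{ij}(F_*^{d_{ij}}c^L)$ via \eqref{eq:hdbs-formula} and sets $\alpha_i=\sum_j\phi_{ij}\st\rho^{[D-d_{ij}]}$ for $D>\max d_{ij}$.
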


\begin{proof}
Write $c=\theta(F_*(ch))$ and put
$\rho=(F_*(c^{(p-1)(L-1)}h))\cdot\theta\in\cE_1$. Then
\[
 \rho(F_*c^L)=\theta\bigl(F_*((c^{L-1})^pch)\bigr)=c^{L-1}\theta(F_*(ch))=c^L.
\]
By induction $\rho^{[t]}(F_*^tc^L)=c^L$ for all $t\geq1$. So composing with
$\rho$ does not change values on $c^L$. Since $c^L$ is HDBS
\cite[Remark~3.20]{Schwede2011}, \eqref{eq:hdbs-formula} lets us write
$a_i=\sum_j\phi_{ij}(F_*^{d_{ij}}c^L)$ with $\phi_{ij}\in\cE_{d_{ij}}$ (we move
source coefficients into the maps). For $D>\max_{i,j}d_{ij}$ all these maps
$\alpha_i=\sum_j\phi_{ij}\st\rho^{[D-d_{ij}]}\in\cE_D$ have degree $D$. They satisfy
\[
 \alpha_i(F_*^Dc^L)
 =\sum_j\phi_{ij}\bigl(F_*^{d_{ij}}\rho^{[D-d_{ij}]}(F_*^{D-d_{ij}}c^L)\bigr)
 =\sum_j\phi_{ij}(F_*^{d_{ij}}c^L)=a_i.\qedhere
\]
\end{proof}

\subsection{Composing with \texorpdfstring{$\theta$}{theta}}

Composing with $\theta$
puts the values of a map on $F_*^{s_0}(c^2R)$ into the test ideal of a single
map.

\begin{lemma}[{cf.~proof of \cite[Theorem~4.3(i)]{SchwedeSmith2010}}]\label{lem:compose-theta}
Let $\theta,c$ be as in Lemma~\ref{lem:common-data}, let $s_0>0$, and
let $\psi_0\in\cE_{s_0}$ satisfy $c\in\psi_0(F_*^{s_0}(c^2R))$. Then
$\psi=\psi_0\st\theta\in\cE_{s_0+1}$ is nonzero,
$(R_c,R_c\langle\psi_c\rangle)$ is strongly $F$-regular, and
$\psi_0(F_*^{s_0}(c^2R))\subseteq\psi(F_*^{s_0+1}(c^2R))
\subseteq\tauB(R;R\langle\psi\rangle)$.
\end{lemma}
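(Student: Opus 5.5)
We split the lemma into its three assertions: nonvanishing, strong $F$-regularity on $\Spec R_c$, and the chain of inclusions. Nonvanishing is immediate from Lemma~\ref{lem:product-divisor}. The hypothesis $c\in\psi_0(F_*^{s_0}(c^2R))$ forces $\psi_0\neq0$, and $\theta\neq0$.

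For the inclusions, write $c=\theta(F_*(ch))$ as in Lemma~\ref{lem:common-data}. For any $y\in R$,
\[
 c^2y=c\cdot c y=\theta\bigl(F_*(c^{p}y^{p}ch)\bigr)\in\theta\bigl(F_*(c^2R)\bigr),
\]
since $p+1\geq2$. Hence $c^2R\subseteq\theta(F_*(c^2R))$, and applying $\psi_0$ gives
\[
 \psi_0(F_*^{s_0}(c^2R))\subseteq\psi_0\bigl(F_*^{s_0}\theta(F_*(c^2R))\bigr)=\psi(F_*^{s_0+1}(c^2R)).
\]
In particular $c\in\psi(F_*^{s}(c^2R))\subseteq\psi(F_*^s(cR))$, where $s=s_0+1$. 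This is hypothesis \eqref{eq:principal-fixed-q} of Lemma~\ref{lem:principal-test-element}. Once strong $F$-regularity of $(R_c,R_c\langle\psi_c\rangle)$ is known, that lemma says $c^2$ is HDBS for $R\langle\psi\rangle$. Formula \eqref{eq:principal-tau-formula-q} with $t=1$ then gives $\psi(F_*^s(c^2R))\subseteq\tauB(R;R\langle\psi\rangle)$.

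The main step is strong $F$-regularity over $R_c$. On $R_c$ the element $c$ is a unit and lies in the image of $(\psi_0)_c$, so $(\psi_0)_c$ is surjective. By the remark after \eqref{eq:principal-comparison}, the pair $(\Spec R_c,\Delta_{\psi_0})$ is sharply $F$-pure. By \eqref{eq:full-after-localization}, $\theta_c$ generates $\Hom_{R_c}(F_*R_c,R_c)$, so $\Delta_{\theta_c}=0$. Formula \eqref{eq:product-Delta} then gives
\[
 \Delta_{\psi}=\lambda\Delta_{\psi_0}\ \text{on }\Spec R_c,\qquad \lambda=\frac{p(p^{s_0}-1)}{p^{s_0+1}-1}\in(0,1).
\]
Since $R_c$ is regular, $(\Spec R_c,0)$ is strongly $F$-regular. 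We apply Lemma~\ref{lem:SS-convexity} with $\Gamma=0$ and $E=\Delta_{\psi_0}$. It shows that $(\Spec R_c,\Delta_\psi)$ is strongly $F$-regular, so $\tauB(R_c,\Delta_{\psi})=R_c$ by \eqref{eq:tau-detects} applied to $\cC(R_c,\Delta_\psi)$.

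It remains to pass from the pair to the algebra. Map--divisor constructions are compatible with localization, so $\Delta_{\psi_c}=\Delta_\psi|_{\Spec R_c}$. Then \eqref{eq:principal-comparison} on $R_c$ gives $\tauB(R_c;R_c\langle\psi_c\rangle)=\tauB(R_c,\Delta_{\psi_c})=R_c$. A second application of \eqref{eq:tau-detects} shows that $(R_c,R_c\langle\psi_c\rangle)$ is strongly $F$-regular.

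The point needing the most care is this chain of identifications. We must check that $\Delta$ of the localized maps is the restriction of $\Delta$, and that nonvanishing of the localized maps holds automatically on a domain.
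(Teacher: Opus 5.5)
Your proposal is correct and follows essentially the same route as the paper. The inclusion $c^2R\subseteq\theta(F_*(c^2R))$ gives the first containment; sharp $F$-purity of $(\Spec R_c,\Delta_{\psi_0})$, the formula $\Delta_\psi=\lambda\Delta_{\psi_0}$ on $\Spec R_c$ and Lemma~\ref{lem:SS-convexity} give strong $F$-regularity; and Lemma~\ref{lem:principal-test-element} with $t=1$ gives the second containment. The only minor differences are that you get $\psi\neq0$ from Lemma~\ref{lem:product-divisor} rather than from $c\in\psi(F_*^{s_0+1}(cR))$, and that you make explicit the compatibility $\Delta_{\psi_c}=\Delta_\psi|_{\Spec R_c}$, which the paper uses implicitly.
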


\begin{proof}
Write $c=\theta(F_*(ch))$. For $r\in R$ we have
$\theta(F_*(c^{p+1}hr^p))=cr\,\theta(F_*(ch))=c^2r$ and
$c^{p+1}hr^p\in c^2R$. Hence $c^2R\subseteq\theta(F_*(c^2R))$. This gives
\[
 \psi_0(F_*^{s_0}(c^2R))
 \subseteq\psi_0\bigl(F_*^{s_0}\theta(F_*(c^2R))\bigr)
 =\psi(F_*^{s_0+1}(c^2R)).
\]
In particular $c\in\psi(F_*^{s_0+1}(cR))$ and $\psi\neq0$.

For the second inclusion we apply Lemma~\ref{lem:principal-test-element} to
$\psi$. We just checked one of its hypotheses. The other one is that
$(R_c,R_c\langle\psi_c\rangle)$ is strongly $F$-regular. This is why we
composed with $\theta$. On $R_c$ the map $\psi_0$ is surjective. So $(\Spec R_c,\Delta_{\psi_0})$ is sharply $F$-pure. By \eqref{eq:full-after-localization},
$\theta_c$ generates $\Hom_{R_c}(F_*R_c,R_c)$. So $D_{\theta_c}=0$.
Lemma~\ref{lem:product-divisor} on $\Spec R_c$, with $d=s_0$ and $f=1$, gives
\[
 \Delta_\psi=\lambda\Delta_{\psi_0}\ \text{ on }\Spec R_c,\qquad
 \lambda=\frac{p(p^{s_0}-1)}{p^{s_0+1}-1}\in(0,1).
\]
 $R_c$ is regular and hence strongly $F$-regular
\cite[Theorem~3.1(c)]{HochsterHuneke1989}. So
by Lemma~\ref{lem:SS-convexity} $(\Spec R_c,\Delta_\psi)$ is strongly
$F$-regular. By \eqref{eq:principal-comparison} and \eqref{eq:tau-detects} $(R_c,R_c\langle\psi_c\rangle)$ is strongly $F$-regular too. Now
Lemma~\ref{lem:principal-test-element} says that $c^2$ is an HDBS test element
for $R\langle\psi\rangle$. \eqref{eq:principal-tau-formula-q} with $t=1$
gives $\psi(F_*^{s_0+1}(c^2R))\subseteq\tauB(R;R\langle\psi\rangle)$.
\end{proof}

\subsection{Prescribing finitely many values}

For interpolation we need many independent directions in $F_*^eR$. This is where we use $\dim R>0$. Following well-known lemma gives that.

\begin{lemma}\label{lem:nonperfect}
Let $R$ be a positive-dimensional normal domain of characteristic
$p>0$ with fraction field $K$. Then $K$ is not perfect. If $R$ is
$F$-finite, then $[K:K^{p^e}]=[K:K^p]^e$, which goes to infinity with
$e$.
\end{lemma}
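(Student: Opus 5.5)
The plan has two parts: first show that $K$ is not perfect, then show that the degree $[K:K^{p^e}]$ is multiplicative in $e$.

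\emph{$K$ is not perfect.} Since $\dim R>0$, there is a nonzero prime $P$ of $R$. Choose a height one prime $P\subseteq R$ and localize. Because $R$ is normal, $R_P$ is a discrete valuation ring, with normalized valuation $v$ on $K$. Pick a uniformizer $t\in R_P$, so $v(t)=1$. If $t=u^p$ for some $u\in K$, then $1=v(t)=p\,v(u)$, which is impossible because $v(u)\in\Z$. Hence $t\notin K^p$, and $K\neq K^p$. This argument does not use $F$-finiteness. The only input is that a positive-dimensional Noetherian normal domain has a height one prime whose local ring is a DVR. This holds because every Noetherian normal domain satisfies $R_1$, and a domain of positive dimension has a prime of height one.

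\emph{Multiplicativity of degrees.} Now assume $R$ is $F$-finite. Then $F_*R$ is a finite $R$-module, and tensoring with $K$ shows that $K$ is finite over $K^p$. Put $n=[K:K^p]$. The Frobenius $x\mapsto x^p$ is a field isomorphism $K\to K^p$, and it carries the extension $K^p\subseteq K$ to $K^{p^2}\subseteq K^p$. Therefore $[K^p:K^{p^2}]=n$. More generally, applying the $j$-th iterate gives $[K^{p^j}:K^{p^{j+1}}]=n$ for every $j$. The tower law applied to the chain $K\supseteq K^p\supseteq\cdots\supseteq K^{p^e}$ then gives
\[
[K:K^{p^e}]=\prod_{j=0}^{e-1}[K^{p^j}:K^{p^{j+1}}]=n^e.
\]
By the first part, $n\geq2$, so $n^e\to\infty$ as $e\to\infty$.

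There is no real obstacle here. The only point to state carefully is the existence of a discrete valuation on $K$, which comes from normality together with $\dim R>0$.
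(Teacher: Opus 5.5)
Your proposal is correct and follows essentially the same route as the paper. A uniformizer of the DVR $R_P$ at a height-one prime is not a $p$th power, and Frobenius identifies each step $K^{p^j}\supseteq K^{p^{j+1}}$ with $K\supseteq K^p$, so the tower law gives $[K:K^{p^e}]=[K:K^p]^e$. The paper produces the height-one prime explicitly as a prime minimal over a nonzero nonunit, using Krull's principal ideal theorem; this is the justification behind your remark that a positive-dimensional Noetherian domain has a height-one prime.
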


\begin{proof}
The point is that a uniformizer of a DVR is not a $p$th power. Choose a
nonzero nonunit $u\in R$ and a prime $P$ minimal over $(u)$. By Krull's
principal ideal theorem $\operatorname{ht}P=1$. So $R_P$ is a DVR
\cite[Tags~\stacktag{00KV} and~\stacktag{00PD}]{stacks-project}. If $v$ is its
normalized valuation and $\pi$ a uniformizer, then $\pi=z^p$ in $K$ would
give $1=pv(z)$, which is impossible. So $[K:K^p]>1$. It is finite if $R$
is $F$-finite. Frobenius identifies $K/K^p$ with $K^{p^i}/K^{p^{i+1}}$.
\end{proof}

Now we prescribe finitely many values.

\begin{proposition}\label{prop:finite-interpolation}
Assume that $\dim R>0$ and $\cE_1\neq0$. For any
$a_1,\ldots,a_m\in\tauB(R;\cE)$ there are $s>0$ and $0\neq\psi\in\cE_s$
such that
\begin{equation}\label{eq:interpolation-containment}
 (a_1,\ldots,a_m)\subseteq\tauB(R;R\langle\psi\rangle)
                   \subseteq\tauB(R;\cE).
\end{equation}
\end{proposition}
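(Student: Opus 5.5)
The plan follows the outline in Section~\ref{sec:idea}. The right inclusion in \eqref{eq:interpolation-containment} is automatic: for $0\neq\psi\in\cE_s$ we have $R\langle\psi\rangle\subseteq\cE$, so \eqref{eq:monotonicity} applies. All the work is in the left inclusion.

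Fix $0\neq\theta\in\cE_1$ and $c$ as in Lemma~\ref{lem:common-data}. Put $a_0=c$, which lies in $\tauB(R;\cE)$ since it is a test element. We may assume $m\geq1$. By Lemma~\ref{lem:nonperfect} and $\dim R>0$, the rank of $F_*^eR$ over $R$ at the generic point exceeds $m$ for $e\gg0$. Fix such an $e$.

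After replacing $c$ by a nonzero multiple (still allowed by Lemma~\ref{lem:common-data}), we may assume $(F_*^eR)_c$ is free over $R_c$ of rank $r>m$. Indeed, $F_*^eR$ is finite and torsion free, hence free on some $D(g)$. Pick basis elements $v_0,\ldots,v_m$ with dual coordinate functions $v_i^*$. Clearing denominators, set $w_j=c^{N}v_j$ with $N$ large, chosen so that $w_j\in F_*^e(c^2R)$. By \eqref{eq:full-after-localization}, each $c^{M}v_i^*$ is the localization of some $\beta_i\in\cE_e$ for $M$ large. Then $\beta_i(w_j)=\delta_{ij}c^L$ in $R_c$, with $L$ depending on $N,M$ and the $F_*^e$-scalar conventions. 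Since $R$ is a domain, this identity holds in $R$. Enlarge $N,M$ so that $L\geq1$.

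Next, Lemma~\ref{lem:synchronization} gives $D>0$ and $\alpha_i\in\cE_D$ with $\alpha_i(F_*^Dc^L)=a_i$ for $0\leq i\leq m$. Set $\psi_0=\sum_i\alpha_i\st\beta_i\in\cE_{D+e}$. Then
\[
 \psi_0\bigl(F_*^{D+e}w_j\bigr)=\sum_i\alpha_i\bigl(F_*^D\beta_i(F_*^ew_j)\bigr)=\alpha_j(F_*^Dc^L)=a_j.
\]
Here $w_j$, viewed in $R$, lies in $c^2R$. So $a_j\in\psi_0(F_*^{s_0}(c^2R))$ with $s_0=D+e$, and in particular $c=a_0$ lies there.

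Now apply Lemma~\ref{lem:compose-theta} to $\psi_0$. It shows $\psi=\psi_0\st\theta\in\cE_{s_0+1}$ is nonzero and
\[
 a_j\in\psi_0(F_*^{s_0}(c^2R))\subseteq\tauB(R;R\langle\psi\rangle)
\]
for all $j$. This gives the left inclusion.

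The main obstacle I expect is the bookkeeping in the interpolation step. One must make sure that:
\begin{itemize}
\item the clearing of denominators in the $\beta_i$ and $w_j$ is compatible with the $p^e$-twisted module structure, so that a single common power $c^L$ appears;
\item the $w_j$ really lie in $F_*^e(c^2R)$;
\item the identity $\beta_i(w_j)=\delta_{ij}c^L$ holds in $R$ and not only in $R_c$. This holds because $R$ is a domain.
\end{itemize}
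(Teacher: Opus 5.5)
Your proposal is correct and follows the paper's proof essentially step for step. The steps match: a dual basis obtained from generic freeness and \eqref{eq:full-after-localization} after enlarging $c$, synchronization including the extra value $a_0=c$, the sum $\psi_0=\sum_i\alpha_i\st\beta_i$, and finally Lemma~\ref{lem:compose-theta}. The bookkeeping you flag is routine: $L=M+N$ because $v_i^*$ is $R_c$-linear. Also, you should set $a_0=c$ only after the last replacement of $c$, although your ordering works too, since $\psi_0(F_*^{s_0}(c^2R))$ is an ideal containing the old $c$ and hence its multiple, the new $c$.
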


\begin{proof}
\setcounter{claim}{0}
Choose $0\neq\theta\in\cE_1$ and $c$ as in Lemma~\ref{lem:common-data}. We
first build a dual basis in some $F_*^eR$ (Claim~\ref{cl:dual-basis}). We then construct maps taking the values $a_i$ and add them up.

\begin{claim}\label{cl:dual-basis}
After replacing $c$ by a nonzero multiple, one can find $e>0$, $L\geq1$,
elements $w_0,\ldots,w_m\in F_*^e(c^2R)$ and maps
$\beta_0,\ldots,\beta_m\in\cE_e$ with $\beta_i(w_j)=\delta_{ij}c^L$ for all
$i,j$.
\end{claim}

\begin{proof}[Proof of Claim~\ref{cl:dual-basis}]
Put $K=\Frac R$. By Lemma~\ref{lem:nonperfect} there is $e>0$ with
$[K:K^{p^e}]\geq m+1$. This is rank of $F_*^eR$. By generic freeness
\cite[Tag~\stacktag{051S}]{stacks-project}, after replacing $c$ by a nonzero
multiple (by Lemma~\ref{lem:common-data}), $P=(F_*^eR)_c$ is a free
$R_c$-module. Choose $m+1$ elements
$v_0,\ldots,v_m$ of a basis of $P$ and let $v_i^*\in\Hom_{R_c}(P,R_c)$ be
their coordinate functions. So $v_i^*(v_j)=\delta_{ij}$. Now we clear
denominators on both sides. By \eqref{eq:full-after-localization},
$(\cE_e)_c=\Hom_{R_c}(P,R_c)$. So there are $A\geq0$ and $\beta_i\in\cE_e$
with $(\beta_i)_c=c^Av_i^*$. Write
$v_j=c^{-B_j}\cdot F_*^ez_j$ with $B_j\geq0$ and $z_j\in R$, where $c^{-B_j}$
acts through the $R_c$-module structure of $P$. Choose $B>\max_jB_j$ and put
\[
 w_j=c^Bv_j=F_*^e\bigl(c^{(B-B_j)p^e}z_j\bigr).
\]
As $(B-B_j)p^e\geq p^e\geq2$, all $w_j$ lie in $F_*^e(c^2R)$. In $R_c$ we have
$\beta_i(w_j)=c^Av_i^*(c^Bv_j)=\delta_{ij}c^{A+B}$. Both sides lie in
$R$. So $L=A+B$ works.
\end{proof}

From now on $c$ is the element of Claim~\ref{cl:dual-basis}. We add the extra
value $a_0=c$, which lies in $\tauB(R;\cE)$ by \eqref{eq:hdbs-formula}. We
need it for Lemma~\ref{lem:compose-theta}. By
Lemma~\ref{lem:synchronization}, applied to $a_0,\ldots,a_m$ and $c^L$, there
are $D>0$ and $\alpha_i\in\cE_D$ with $\alpha_i(F_*^Dc^L)=a_i$. Put
$\psi_0=\sum_{i=0}^m\alpha_i\st\beta_i\in\cE_{D+e}$. Viewing $F_*^{D+e}R$ as
$F_*^D(F_*^eR)$, Claim~\ref{cl:dual-basis} gives
\[
 \psi_0(F_*^Dw_j)=\sum_{i=0}^m\alpha_i\bigl(F_*^D(\beta_i(w_j))\bigr)
 =\alpha_j(F_*^Dc^L)=a_j.
\]
So $\beta_j$ picks up $w_j$, $\alpha_j$ turns it into $a_j$. As all
$w_j$ lie in $F_*^e(c^2R)$, every $a_j$ is a value of $\psi_0$ on
$F_*^{D+e}(c^2R)$. In particular $c\in\psi_0(F_*^{D+e}(c^2R))$. Now
Lemma~\ref{lem:compose-theta} with $s_0=D+e$ shows that
$\psi=\psi_0\st\theta\in\cE_s$, $s=D+e+1$, is nonzero. It satisfies
$(a_1,\ldots,a_m)\subseteq\tauB(R;R\langle\psi\rangle)$. Second inclusion
in \eqref{eq:interpolation-containment} follows from
$R\langle\psi\rangle\subseteq\cE$ and \eqref{eq:monotonicity}.
\end{proof}

\subsection{Principalization}

\begin{proposition}\label{prop:principal-degree-one}
Assume $\cE_1\neq0$. Then there are $s>0$ and $0\neq\psi\in\cE_s$ with
$\tauB(R;\cE)=\tauB(R;R\langle\psi\rangle)$.
\end{proposition}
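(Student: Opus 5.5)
The plan is to split according to $\dim R$ and, in the main case, to feed a finite generating set of the test ideal into Proposition~\ref{prop:finite-interpolation}.

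\emph{Case $\dim R>0$.} Since $R$ is Noetherian, the ideal $\tauB(R;\cE)$ is finitely generated, say $\tauB(R;\cE)=(a_1,\ldots,a_m)$. Because $\cE_1\neq0$ and $\dim R>0$, Proposition~\ref{prop:finite-interpolation} gives $s>0$ and $0\neq\psi\in\cE_s$ such that
\[
 (a_1,\ldots,a_m)\subseteq\tauB(R;R\langle\psi\rangle)\subseteq\tauB(R;\cE).
\]
The two outer terms coincide, so equality holds throughout.

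\emph{Case $\dim R=0$.} A normal domain of dimension $0$ is a field $K$, and $F$-finiteness makes $[K:K^p]$ finite. Take any $0\neq\psi\in\cE_1$, so $s=1$. Over a field, $\psi$ is a nonzero $K$-linear map $F_*K\to K$, hence surjective. Both $\cE$ and $R\langle\psi\rangle$ are nondegenerate. Their test ideals are nonzero ideals of a field, because by definition they meet $R^\circ=K\setminus\{0\}$. Hence both test ideals equal $K$.

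One could also argue via \eqref{eq:tau-detects}: $1=\psi(F_*u)$ for some $u$, and then for any $d\neq0$ we get $1=\psi(F_*(d\cdot d^{-1}u))$. So $(K,K\langle\psi\rangle)$ is strongly $F$-regular. Either way, the equality holds trivially.

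\emph{Where the difficulty lies.} There is essentially no obstacle at this stage. All the work (interpolation, synchronization, and composing with $\theta$ to control the test element) is already packaged in Proposition~\ref{prop:finite-interpolation}. The only points to check are finite generation of the test ideal and the separate dimension-zero case, which Lemma~\ref{lem:nonperfect} excludes because $K$ may be perfect there.
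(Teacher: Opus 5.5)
Your proposal is correct and follows the paper's proof: in positive dimension you apply Proposition~\ref{prop:finite-interpolation} to a finite generating set of $\tauB(R;\cE)$, so both inclusions become equalities. In dimension $0$ you use that $R$ is a field and any nonzero $\psi\in\cE_1$ is surjective. Your first dimension-zero argument, that any nondegenerate test ideal meets $R^\circ$ and so is the whole field, is a harmless shortcut around the paper's appeal to \eqref{eq:tau-detects}.
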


\begin{proof}
If $\dim R>0$, apply Proposition~\ref{prop:finite-interpolation} to generators
$a_1,\ldots,a_m$ of $\tauB(R;\cE)$. Both inclusions in
\eqref{eq:interpolation-containment} become equalities. If $\dim R=0$, then
$R$ is a field $\tauB(R;\cE)=R$. Choose $0\neq\psi\in\cE_1$. A nonzero
linear functional $F_*R\to R$ is surjective.
Also, $dR=R$ for $0\neq d\in R$.
Hence $1\in\psi(F_*(dR))$. Thus $(R,R\langle\psi\rangle)$ is strongly
$F$-regular and its test ideal is $R$ by \eqref{eq:tau-detects}.
\end{proof}

To drop the assumption $\cE_1\neq0$ we pass to Veronese, using
$2$ facts. 

\begin{lemma}\label{lem:iterate}
Let $0\neq\phi\in\cE_d$ with $d>0$ and let $n\geq1$. Then
$\tauB(R;R\langle\phi^{[n]}\rangle)=\tauB(R;R\langle\phi\rangle)$.
\end{lemma}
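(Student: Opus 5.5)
The quickest route is through the map--divisor correspondence. By \eqref{eq:principal-comparison} it is enough to show $\Delta_{\phi^{[n]}}=\Delta_\phi$. Then
\[
\tauB(R;R\langle\phi^{[n]}\rangle)=\tauB(R,\Delta_{\phi^{[n]}})=\tauB(R,\Delta_\phi)=\tauB(R;R\langle\phi\rangle).
\]

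For the divisors, I induct on $n$ using Lemma~\ref{lem:product-divisor}. Write $\phi^{[n]}=\phi^{[n-1]}\st\phi$. The lemma gives $\phi^{[n]}\neq0$ and
\[
D_{\phi^{[n]}}=p^dD_{\phi^{[n-1]}}+D_\phi.
\]
Inductively this yields $D_{\phi^{[n]}}=(1+p^d+\cdots+p^{(n-1)d})D_\phi=\frac{p^{nd}-1}{p^d-1}D_\phi$. Dividing by $p^{nd}-1$ gives $\Delta_{\phi^{[n]}}=\Delta_\phi$.

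Alternatively, one can argue directly from \eqref{eq:product-Delta}. It expresses $\Delta_{\phi^{[n-1]}\st\phi}$ as a weighted average of $\Delta_{\phi^{[n-1]}}$ and $\Delta_\phi$ with weights summing to $1$. If both equal $\Delta_\phi$, so does the average.

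A purely algebraic proof avoiding divisors is also available as a cross-check. The inclusion $R\langle\phi^{[n]}\rangle\subseteq R\langle\phi\rangle$ gives $\subseteq$ by \eqref{eq:monotonicity}. For the reverse inclusion, let $J=\tauB(R;R\langle\phi^{[n]}\rangle)$ and consider $J'=\sum_{k=0}^{n-1}\phi^{[k]}(F_*^{kd}J)$. This ideal is stable under $\phi$ and under its source multiples. Indeed, $\phi$ applied to the $k=n-1$ term lands in $\phi^{[n]}(F_*^{nd}J)\subseteq J$. It also contains $J$, which is nonzero. Minimality then gives $\tauB(R;R\langle\phi\rangle)\subseteq J'$.

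The catch is that $J'$ may be larger than $J$, so minimality alone does not finish. That is why I prefer the divisor route. Its only step needing care is the induction in Lemma~\ref{lem:product-divisor}, and that step is immediate.
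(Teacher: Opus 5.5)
Your proof is correct and is essentially the paper's argument: you use Lemma~\ref{lem:product-divisor} for $\phi^{[n]}\neq0$, reduce to $\Delta_{\phi^{[n]}}=\Delta_\phi$, and conclude with \eqref{eq:principal-comparison}. The only difference is that the paper cites \cite[Theorem~3.11(e)]{SchwedeFAdjunction2009} for the divisor equality, whereas you derive it by a short induction from $D_{\phi^{[n-1]}\st\phi}=p^dD_{\phi^{[n-1]}}+D_\phi$, and that induction is valid.
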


\begin{proof}
By Lemma~\ref{lem:product-divisor}, $\phi^{[n]}\neq0$.
$\Delta_{\phi^{[n]}}=\Delta_\phi$ by
\cite[Theorem~3.11(e)]{SchwedeFAdjunction2009}. Now use
\eqref{eq:principal-comparison}.
\end{proof}

If $\cE$ has a nonzero map of positive degree, then
\begin{equation}\label{eq:sum-principal}
 \tauB(R;\cE)=\sum_{d>0}\ \sum_{0\neq\phi\in\cE_d}\tauB(R;R\langle\phi\rangle)
\end{equation}
by the proof of \cite[Theorem~5.1]{Schwede2011}, see also
\cite[Chapter~5, Theorem~8.31]{SchwedeSmithBook2026}.

\begin{lemma}\label{lem:veronese}
Assume $\cE_d\neq0$ for some $d>0$ and let $n>0$. Then for
$\cE^{(n)}=\bigoplus_{e\geq0}\cE_{ne}$, we have $\tauB(R;\cE^{(n)})=\tauB(R;\cE)$.
\end{lemma}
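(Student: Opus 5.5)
The plan is to prove both inclusions using \eqref{eq:sum-principal} together with Lemma~\ref{lem:iterate}.

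\emph{The inclusion $\tauB(R;\cE^{(n)})\subseteq\tauB(R;\cE)$.} First note that $\cE^{(n)}$ is a Cartier subalgebra: it is closed under $\st$ because degrees add, and $\cE^{(n)}_0=R$. It is nondegenerate: if $0\neq\phi\in\cE_d$, then $\phi^{[n]}\in\cE_{nd}=\cE^{(n)}_d$, and $\phi^{[n]}\neq0$ by Lemma~\ref{lem:product-divisor}. Since $\cE^{(n)}\subseteq\cE$ as graded subrings of $\cC(R)$ (only the grading is rescaled), the inclusion follows from \eqref{eq:monotonicity}. To be careful, I would check it directly from the characterization of $\tauB$ as the smallest ideal meeting $R^\circ$ that is stable under all maps of positive degree. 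Any $\cE$-stable ideal is $\cE^{(n)}$-stable, so it contains the smallest $\cE^{(n)}$-stable ideal meeting $R^\circ$. This argument uses nothing about how the degrees are relabeled.

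\emph{The reverse inclusion $\tauB(R;\cE)\subseteq\tauB(R;\cE^{(n)})$.} By \eqref{eq:sum-principal} applied to $\cE$, it suffices to show $\tauB(R;R\langle\phi\rangle)\subseteq\tauB(R;\cE^{(n)})$ for every $d>0$ and every $0\neq\phi\in\cE_d$. Lemma~\ref{lem:iterate} gives $\tauB(R;R\langle\phi\rangle)=\tauB(R;R\langle\phi^{[n]}\rangle)$. The algebra $R\langle\phi^{[n]}\rangle$ lives in degrees divisible by $nd$, and by \eqref{eq:principal-piece} its pieces are source multiples of iterates of $\phi^{[n]}$, all of which lie in $\cE$. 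Hence $R\langle\phi^{[n]}\rangle\subseteq\cE^{(n)}$, and \eqref{eq:monotonicity} gives $\tauB(R;R\langle\phi^{[n]}\rangle)\subseteq\tauB(R;\cE^{(n)})$. Summing over all such $\phi$ gives the claim.

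\emph{Main obstacle.} The only delicate point is bookkeeping of the grading. Inside $\cE^{(n)}$, the map $\phi^{[n]}$ has degree $d$, not $nd$. Test ideals, however, depend only on which maps act, not on the grading labels: the stability condition and the condition of meeting $R^\circ$ are unchanged when the degrees are rescaled. So I would phrase the argument through the minimal-stable-ideal characterization rather than through the formal graded statement \eqref{eq:monotonicity}, which avoids any ambiguity.
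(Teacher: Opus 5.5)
Your proposal is correct and is essentially the paper's proof. The inclusion $\tauB(R;\cE^{(n)})\subseteq\tauB(R;\cE)$ is monotonicity \eqref{eq:monotonicity}. The reverse inclusion comes from \eqref{eq:sum-principal}, combined with Lemma~\ref{lem:iterate} for $\phi^{[n]}\in\cE_{nd}$ and monotonicity for $R\langle\phi^{[n]}\rangle\subseteq\cE^{(n)}$. Your added checks, that $\cE^{(n)}$ is nondegenerate and that regrading does not affect the test ideal, only make explicit what the paper leaves implicit.
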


\begin{proof}
For every $d>0$ and $0\neq\phi\in\cE_d$ we have $0\neq\phi^{[n]}\in\cE_{nd}$. So
Lemma~\ref{lem:iterate} and \eqref{eq:monotonicity} give
$\tauB(R;R\langle\phi\rangle)=\tauB(R;R\langle\phi^{[n]}\rangle)
\subseteq\tauB(R;\cE^{(n)})$. Summing up, \eqref{eq:sum-principal} gives
$\subseteq$. \eqref{eq:monotonicity} gives $\supseteq$.
\end{proof}

Now we can drop $\cE_1\neq0$. We state this for normal rings, as Section~\ref{sec:general} needs it on a
normalization.

\begin{corollary}\label{cor:normal-principal}
Let $S$ be a normal $F$-finite ring and let $\cE\subseteq\cC(S)$ be a Cartier
subalgebra with a nonzero map of positive degree on every factor of $S$.
Then there are $s>0$ and $\theta\in\cE_s$, nonzero on every factor, with
$\tauB(S;\cE)=\tauB(S;S\langle\theta\rangle)$.
\end{corollary}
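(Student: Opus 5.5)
Write $S=\prod_{i=1}^r S_i$ with each $S_i$ a normal $F$-finite domain, and let $\cE_i$ be the image of $\cE$ in $\cC(S_i)$. Since maps in $\cC(S)$ preserve factors, $\cC_e(S)=\prod_i\cC_e(S_i)$. Each $\cE_e$ is an $S$-module, so multiplying by the idempotents $\epsilon_i$ (on the source or target) shows $\cE_e=\prod_i(\cE_i)_e$. Thus a tuple of maps $(\theta_i)$ with $\theta_i\in(\cE_i)_s$ lies in $\cE_s$. By definition $\tauB(S;\cE)=\prod_i\tauB(S_i;\cE_i)$. Since the subalgebra generated by such a tuple $\theta$ satisfies $S\langle\theta\rangle_i=S_i\langle\theta_i\rangle$, it suffices to find, on each factor, a nonzero $\theta_i\in(\cE_i)_{s}$ with $\tauB(S_i;\cE_i)=\tauB(S_i;S_i\langle\theta_i\rangle)$, \emph{with one common degree $s$}.

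Fix a factor $R=S_i$ with $\cD=\cE_i$, and choose $d_i>0$ with $(\cD)_{d_i}\neq0$. Consider the Veronese $\cD^{(d_i)}=\bigoplus_e\cD_{d_ie}$. This is a Cartier subalgebra for the $e$-th Frobenius $F^{d_i}$ with nonzero degree-one piece. By Lemma~\ref{lem:veronese}, $\tauB(R;\cD^{(d_i)})=\tauB(R;\cD)$. Now apply Proposition~\ref{prop:principal-degree-one} with $p$ replaced by $q=p^{d_i}$; equivalently, rerun Section~\ref{sec:affine} with $F^{d_i}$ in place of $F$. This is the step that needs care. I would check that all ingredients only use the Frobenius formally: Lemma~\ref{lem:common-data}, Lemma~\ref{lem:synchronization}, Lemma~\ref{lem:principal-test-element}, Lemma~\ref{lem:nonperfect}, and Lemma~\ref{lem:compose-theta}. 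In particular, the product formula \eqref{eq:product-Delta} and Lemma~\ref{lem:SS-convexity} hold with $q$ in place of $p$, since $\lambda=q(q^{s_0}-1)/(q^{s_0+1}-1)\in(0,1)$ is still rational. The test ideal $\tauB(R;\cD^{(d_i)})$ is the same object, because an $e$-th graded piece of the Veronese is a $p^{-d_ie}$-linear map. This gives $0\neq\theta_i'\in\cD_{d_it_i}$ with $\tauB(R;\cD)=\tauB(R;R\langle\theta_i'\rangle)$.

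To synchronize degrees, let $s=\operatorname{lcm}_i(d_it_i)$ and replace $\theta_i'$ by the iterate $\theta_i=(\theta_i')^{[s/(d_it_i)]}\in\cD_s$. By Lemma~\ref{lem:iterate}, this iterate is nonzero and has the same test ideal. Then $\theta=(\theta_1,\ldots,\theta_r)\in\cE_s$ is nonzero on every factor, and
\[
\tauB(S;S\langle\theta\rangle)=\prod_i\tauB(S_i;S_i\langle\theta_i\rangle)=\prod_i\tauB(S_i;\cE_i)=\tauB(S;\cE).
\]

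The main obstacle is the honest verification that the degree-one argument transfers to the Veronese, that is, to $q=p^{d_i}$. The one potential snag is the identity $R\langle\psi\rangle_{ns}=(F_*^{ns}R)\cdot\psi^{[n]}$ and the map–divisor dictionary: $\Delta_\phi$ computed for $\phi$ viewed as $q^{-e}$-linear coincides with the $p$-version, since both are defined via $\cC_{d_ie}(R)$. A second, purely formal point is that $\dim R=0$ factors are handled directly, as in the proof of Proposition~\ref{prop:principal-degree-one}.
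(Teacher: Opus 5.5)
Your proposal is correct and follows the paper's proof essentially step for step. The paper also first treats a domain via the Veronese $\cE^{(n)}$, rerunning Section~\ref{sec:affine} with $F^n$ in place of $F$ and using Lemma~\ref{lem:veronese}. It then synchronizes the factors' degrees by taking the lcm together with Lemma~\ref{lem:iterate}, and assembles $\theta=\sum_i(F_*^s\epsilon_i)\cdot\vartheta_i$ from lifts $\vartheta_i\in\cE_s$, which is exactly your identification $\cE_s=\prod_i(\cE_i)_s$ via the idempotents.
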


\begin{proof}
First let $S$ be a domain. Choose $n>0$ with $\cE_n\neq0$. All maps in
$\cE^{(n)}$ have degree divisible by $n$. So we can run this section for
$\cE^{(n)}$ with Frobenius replaced by $F^n$. Read $F_*^e$ as $F_*^{ne}$, $p$
as $p^n$ and $\cE_e$ as $\cE_{ne}$. Use a nonzero map in $\cE_n$ as
$\theta$ in Lemma~\ref{lem:common-data}. Everything goes through. So Proposition~\ref{prop:principal-degree-one}
gives $0\neq\theta\in\cE_{ns}$ with
$\tauB(S;S\langle\theta\rangle)=\tauB(S;\cE^{(n)})$. This is $\tauB(S;\cE)$
by Lemma~\ref{lem:veronese}.

In general write $S=\prod_iS_i$ with normal domains $S_i$, idempotents
$\epsilon_i$ and images $\cE_i\subseteq\cC(S_i)$ of $\cE$. The domain case
gives $0\neq\theta_i\in(\cE_i)_{s_i}$ with
$\tauB(S_i;\cE_i)=\tauB(S_i;S_i\langle\theta_i\rangle)$. Put
$s=\operatorname{lcm}_i(s_i)$. By Lemma~\ref{lem:iterate},
$\theta_i'=\theta_i^{[s/s_i]}\in(\cE_i)_s$ has the same test ideal. Lift
$\theta_i'$ to some $\vartheta_i\in\cE_s$ and put
$\theta=\sum_i(F_*^s\epsilon_i)\cdot\vartheta_i$, which lies in $\cE_s$ as
$\cE_s$ is stable under source multiplication. Its image in $\cC(S_i)$ is
$\theta_i'$. So $S\langle\theta\rangle$ has image $S_i\langle\theta_i'\rangle$
in $\cC(S_i)$. Thus
$\tauB(S;S\langle\theta\rangle)=\prod_i\tauB(S_i;\cE_i)=\tauB(S;\cE)$.
\end{proof}

\begin{proof}[Proof of Corollary~\ref{thm:intro-principalization}]
Corollary~\ref{cor:normal-principal} for $S=R$ gives $s>0$ and
$0\neq\psi\in\cD_s$ with $\tauB(R;\cD)=\tauB(R;R\langle\psi\rangle)$. Second equality is \eqref{eq:principal-comparison}.
\eqref{eq:log-trivial} gives last assertion.
\end{proof}

\begin{proof}[Proof of Corollary~\ref{thm:intro-affine}]
We reduce to domains. Since $\Spec R$ is normal with finitely many
irreducible components, it is a disjoint union of normal integral schemes
\cite[Tag~\stacktag{0357}]{stacks-project}, i.e.\ $R=R_1\times\cdots\times R_t$
with all $R_i$ normal domains. Each $\cC_1(R_i)$ is nonzero. So Corollary~\ref{cor:normal-principal} for
$S=R$ and $\cE=\cC(R)$ gives $s>0$ and $\theta\in\cC_s(R)$, nonzero on every
factor, with $\tauB(R)=\tauB(R;R\langle\theta\rangle)$. Let $\Delta$ be
$\Delta_{\theta_i}$ on $\Spec R_i$, where $\theta_i$ is the image of $\theta$
in $\cC_s(R_i)$. Then \eqref{eq:principal-comparison} and
\eqref{eq:log-trivial} on each $R_i$ give $\tauB(R,\Delta)=\tauB(R)$ and
$(p^s-1)(K_X+\Delta)\sim0$.
\end{proof}

\section{Reduced rings}\label{sec:general}

In this section we prove Theorem~\ref{thm:main}. As explained in
Section~\ref{sec:idea}, we pass to the normalization of
$R/\operatorname{Ann}_R\tau(I,\mathscr B)$, apply Section~\ref{sec:affine}
there and then come back to $\mathscr B$. We use the notation of
Section~\ref{sec:test-modules}. $R$ is a reduced $F$-finite ring,
$\mathscr B$ acts on an ideal $I\subseteq R$, $J=\tau(I,\mathscr B)$ and
$\Sigma=\Sigma(\mathscr B,I)$. If $J=0$, then $s=1$ and $\psi=0\in\mathscr B_1$ work, as
$\tau(I,R\langle0\rangle)=0$ by \eqref{eq:generic-condition}. So from now on
$J\neq0$.

\subsection{Normalizing the test module}

By \eqref{eq:generic-condition} and \eqref{eq:generic-vanish}, $J_Q=I_Q=R_Q$
for $Q\in\Sigma$ and $J_Q=0$ for all other minimal primes $Q$ of $R$. As
$R\subseteq\prod_{Q\in\operatorname{Min}R}R_Q$, the annihilator
$H=\operatorname{Ann}_RJ$ is $\bigcap_{Q\in\Sigma}Q$. Thus $A=R/H$ is reduced,
its minimal primes are the $Q/H$ with $Q\in\Sigma$ and its total quotient
ring is $Q(A)=\prod_{Q\in\Sigma}R_Q$. Also $J\cap H=0$, as $x\in J\cap H$
gives $x^2\in HJ=0$. We view $J$ as an ideal of $A$. We have
$J\otimes_AQ(A)=Q(A)$. Hence every $\phi\in\mathscr B_e$ extends uniquely
from $J$ to a $p^{-e}$-linear map on $Q(A)$
\cite[Chapter~5, Observation~8.6]{SchwedeSmithBook2026}, which we also call $\phi$. It preserves the factors of $Q(A)$, as idempotents
satisfy $\epsilon^{p^e}=\epsilon$.

Let $S$ be the normalization of $A$ in $Q(A)$. As $A$ is $F$-finite, $S$ is finite over $A$
\cite[Tag~\stacktag{035S}]{stacks-project}. Thus $S$ is a normal $F$-finite
ring with one factor for each $Q\in\Sigma$. There is a nonzerodivisor
$d\in A$ with $dS\subseteq A$. Minimality forces $J$ to be an ideal of $S$. Following construction shows this.

\begin{lemma}(cf. \cite[Theorem~5.5(a)]{BlickleStabler2019}, \cite[Proposition~7.10]{SchwedeCenters2010})\label{lem:saturation}
For an $R$-submodule $N\subseteq J$, let $N^S=\{x\in N:Sx\subseteq N\}$ be the
largest ideal of $S$ contained in $N$.
\begin{enumerate}[label=(\alph*)]
\item If $N_Q=J_Q$ for all $Q\in\Sigma$, then $(N^S)_Q=J_Q$ for all
$Q\in\Sigma$.
\item If $\phi(F_*^eN)\subseteq N$ for some $\phi\in\mathscr B_e$, then
$\phi(F_*^eN^S)\subseteq N^S$.
\end{enumerate}
\end{lemma}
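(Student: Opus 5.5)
Both parts rest on the conductor element $d$: it is a nonzerodivisor of $A$ with $dS\subseteq A$. Throughout we regard $N\subseteq J\subseteq A\subseteq S\subseteq Q(A)$, and every $\phi\in\mathscr B_e$ acts on $Q(A)$ by its unique extension.

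For (a), I would first show that $dN\subseteq N^S$. Let $x\in N$ and $s\in S$. Then $s\,(dx)=(sd)x$, and $sd\in A$. Since $N$ is an $R$-module, and hence an $A$-module because $H$ kills $J\supseteq N$, we get $(sd)x\in N$. So $dN\subseteq N^S$. Now localize at $Q\in\Sigma$. There $A_{Q/H}=R_Q$ is a field, and $d$ is a unit in it because $d$ is a nonzerodivisor of $A$ and so avoids every minimal prime. Hence $(N^S)_Q\supseteq (dN)_Q=N_Q=J_Q$. The reverse inclusion $(N^S)_Q\subseteq J_Q$ holds because $N^S\subseteq N\subseteq J$.

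For (b), let $x\in N^S$; we must show that $y=\phi(F_*^e x)\in N^S$. We already know $y\in N$, because $x\in N$. It remains to show $sy\in N$ for every $s\in S$. The key computation uses $p^{-e}$-linearity of the extended map on $Q(A)$:
\[
 s\,\phi(F_*^ex)=\phi\bigl(F_*^e(s^{p^e}x)\bigr).
\]
Since $s^{p^e}\in S$ and $x\in N^S$, we have $s^{p^e}x\in N$. Therefore $\phi(F_*^e(s^{p^e}x))\in\phi(F_*^eN)\subseteq N$.

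The one point that needs justification is that the extended $\phi$ on $Q(A)$ restricts to the original action on $N\subseteq J$ and satisfies this linearity identity for $s\in S\subseteq Q(A)$. Both follow from uniqueness of the extension, as in \cite[Chapter~5, Observation~8.6]{SchwedeSmithBook2026}. The identity holds on $Q(A)$ because it is the $Q(A)$-linear structure of the extension, and $s^{p^e}x$ is an element of $J$ on which the extension agrees with the original $\phi$.

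I expect this compatibility in (b) to be the only real obstacle; everything else is direct. It is also the reason Lemma~\ref{lem:saturation} forces $J$ to be an ideal of $S$. Both $N=J$ and $N=J^S$ satisfy the hypotheses of (a) and (b) for all $\phi\in\mathscr B_+$. So $J^S$ is a $\mathscr B$-submodule satisfying \eqref{eq:generic-condition}, and minimality of $J$ then gives $J^S=J$.
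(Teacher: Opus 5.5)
Your proposal is correct and follows the paper's proof essentially verbatim. In (a) you use the conductor element $d$ to get $dN\subseteq N^S$ and then note that $d$ becomes a unit at each $Q\in\Sigma$. In (b) you use the identity $u\,\phi(F_*^ex)=\phi(F_*^e(u^{p^e}x))$ in $Q(A)$ together with $u^{p^e}x\in N$. Your closing remark that $J^S=J$ is how the paper then uses the lemma in Proposition~\ref{prop:normalize-J}\ref{it:SJ}.
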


\begin{proof}
(a) For $x\in N$ we have $S(dx)=(dS)x\subseteq Ax\subseteq N$, as $N$ is an
$A$-module. So $dN\subseteq N^S$. Also $(dN)_Q=N_Q$ for $Q\in\Sigma$, as $d$ is
a unit in $R_Q$. (b) Let $x\in N^S$ and $u\in S$. In $Q(A)$ we have
$u\,\phi(F_*^ex)=\phi(F_*^e(u^{p^e}x))$. This lies in
$\phi(F_*^eN)\subseteq N$ because $u^{p^e}x\in N$.
\end{proof}

\begin{proposition}\label{prop:normalize-J}
\begin{enumerate}[label=(\alph*)]
\item\label{it:SJ}(cf. \cite[Theorem~5.1]{CarvajalRojasStabler2023}) $J$ is an ideal of $S$. In particular, $cS\subseteq J\subseteq A$ for $c\in J$.
\item\label{it:extend}(cf.\cite[Proposition~7.11]{SchwedeCenters2010}, \cite[Lemma~8.1]{SchwedeFAdjunction2009},
\cite[Exercise~6.14]{BlickleSchwede2013}) For $\phi\in\mathscr B_e$, the map $\phi$ on $Q(A)$ sends $S$ into $S$. It
 therefore restricts to $\overline\phi\in\cC_e(S)$.
\item\label{it:E} Let $\cE\subseteq\cC(S)$ be the Cartier subalgebra generated
by $S$ and all $\overline\phi$. Then $\tauB(S;\cE)=J$. For $e>0$, $\cE_e$
is the set of finite sums $\sum_i(F_*^eu_i)\cdot\overline{\phi_i}$ with
$\phi_i\in\mathscr B_e$ and $u_i\in S$.
\end{enumerate}
\end{proposition}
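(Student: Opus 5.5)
I prove the three parts in the order (a), (b), (c). Part (b) does not depend on (a), so I start with it; (a) and (c) then use the saturation Lemma~\ref{lem:saturation} together with the minimality of $J$ from \eqref{eq:generic-condition}.

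\emph{Part \ref{it:SJ}.} Apply Lemma~\ref{lem:saturation} with $N=J$. Part (a) of the lemma shows that $J^S$ satisfies \eqref{eq:generic-condition}, since $J_Q=I_Q$ for $Q\in\Sigma$. Part (b) shows that $J^S$ is a $\mathscr B$-submodule of $I$. Because $J$ is the smallest such submodule, $J\subseteq J^S\subseteq J$. Hence $SJ\subseteq J$, and $J$ is an ideal of $S$. For $c\in J$ this gives $cS\subseteq J\subseteq A$, where $J\subseteq A$ uses the identification of $J$ with its image in $A=R/H$ (recall $J\cap H=0$).

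\emph{Part \ref{it:extend}.} Let $x\in S$ and choose a nonzerodivisor $c\in J$; one exists because $J_Q=R_Q$ for every $Q\in\Sigma$. I want to show that $y=\phi(F_*^ex)$ is integral over $A$. By (a), $c\,x^{p^e n}\in J$ for all $n$ (alternatively, without (a), use $d$ from $dS\subseteq A$ together with $dJ\subseteq\cdots$). Then
\[
 c\,y^{n}=\phi\bigl(F_*^e(c^{p^e}x^{p^en})\bigr)\in J\subseteq A
\]
for all $n$, because $\phi$ maps $J$ into $J$ and $c^{p^e}x^{p^en}\in cJ$. So $A[y]\subseteq c^{-1}A$, which is a finitely generated $A$-module since $A$ is Noetherian. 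Hence $y$ is integral over $A$ and lies in $Q(A)$, so $y\in S$ because $S$ is normal in $Q(A)$. The map $\phi$ is $p^{-e}$-linear over $S$, so $\overline\phi\in\cC_e(S)$.

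\emph{Part \ref{it:E}.} First, the description of $\cE_e$. Sums of source multiples $\sum(F_*^eu_i)\cdot\overline{\phi_i}$ with $u_i\in S$ contain the generators. They are closed under products by \eqref{eq:source-rule}, because $\overline{\phi}\st\overline{\eta}=\overline{\phi\st\eta}$ with $\phi\st\eta\in\mathscr B$. They are also closed under the target $S$-action, since $u\phi=(F_*^eu^{p^e})\cdot\phi$. Therefore they form exactly the degree-$e$ part of the algebra generated by $S$ and the $\overline\phi$.

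Next, $J$ is $\cE$-stable. It is an $S$-ideal by (a), and each $\overline\phi$ preserves $J$. A source multiple $(F_*^eu)\cdot\overline\phi$ sends $J$ to $\phi(F_*^e(uJ))\subseteq\phi(F_*^eJ)\subseteq J$. Moreover $J$ is nonzero on every factor of $S$, since $J_Q=R_Q$. For each $Q\in\Sigma$ some map acts nontrivially, so $\cE$ has a nonzero map of positive degree on every factor. By the characterization at the end of Section~\ref{sec:test-modules}, this gives $\tauB(S;\cE)\subseteq J$.

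For the reverse inclusion, let $T=\tauB(S;\cE)$. I would show that $N=T\cap J$ is a $\mathscr B$-submodule of $I$ satisfying \eqref{eq:generic-condition}, which forces $J\subseteq T$. Stability holds because each $\phi\in\mathscr B_e$ acts on $N$ through $\overline\phi\in\cE_e$, and both $T$ and $J$ are stable. For the generic condition, $T$ is nonzero on each factor of $S$, i.e.\ $T_Q$ is the whole field $R_Q$, and $J_Q=R_Q$ as well. Since these are $S$-ideals contained in $A$ and $S$ is finite over $A$, I get $N_Q=R_Q=I_Q$.

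The main obstacle is part \ref{it:extend}: making the integrality argument rigorous. One has to ensure that $c$ is a nonzerodivisor on $A$ and that the element $c^{p^e}x^{p^en}$ is honestly in $J$ before $\phi$ is applied, given that $\phi$ is a priori only defined on $J$ and extended to $Q(A)$.
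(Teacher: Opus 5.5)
Your parts (a) and (c) are correct and are essentially the paper's argument. In (c), your $N=T\cap J$ is just $T$, since you have already shown $T\subseteq J$. The problem is part (b): the displayed identity
\[
 c\,y^{n}=\phi\bigl(F_*^e(c^{p^e}x^{p^en})\bigr)
\]
is false. The map $\phi$ is $p^{-e}$-linear, not multiplicative. Pulling out the $p^e$th power $(cx^n)^{p^e}$ gives $\phi(F_*^e(c^{p^e}x^{p^en}))=cx^n\,\phi(F_*^e1)$, which has nothing to do with $y^n=\phi(F_*^ex)^n$.

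For example, take $R=\mathbb F_p[t]$, $I=R$ and $\mathscr B=\cC(R)$, so that $J=A=S=R$. Let $\phi\in\cC_1(R)$ send $F_*t^{p-1}\mapsto 1$ and $F_*t^i\mapsto 0$ for $0\leq i<p-1$. With $c=1$ and $x=t^{p-1}$ you get $cy^n=1$, while the right-hand side is $x^n\phi(F_*1)=0$. So, as written, nothing controls the powers of $y$, and the integrality of $y$ is not established.

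The worries you list at the end are not the real issue. A nonzerodivisor $c\in J$ exists by prime avoidance, because $J$ lies in no minimal prime of $A$. And $\phi$ is already defined on all of $Q(A)$. Also, contrary to your opening sentence, your (b) does use (a), and it has to.

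The conclusion $cy^n\in J$ is nevertheless true, and your integrality route can be repaired by using (a) correctly. For $j\in J$, $p^{-e}$-linearity of the extension on $Q(A)$ gives $jy=\phi(F_*^e(j^{p^e}x))$. Moreover $j^{p^e}x=j\cdot(j^{p^e-1}x)\in J$, because $J$ is an ideal of $S$. Hence $yJ\subseteq J$. By induction $y^nc\in J\subseteq A$, so $A[y]\subseteq c^{-1}A$ is a finitely generated $A$-module. Equivalently, you can apply the determinant trick to $J$, which is finitely generated and faithful over $A[y]$. Either way $y\in Q(A)$ is integral over $A$, so $y\in S$.

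With this fix, your proof of (b) genuinely differs from the paper's. The paper works on each non-field factor $S'$ and localizes at height-one primes to DVRs $V$ with $JV=t^mV$. It then computes $\phi(F_*^ez)=t^{-m}\phi(F_*^e(t^{mp^e}z))\in t^{-m}JV=V$ and concludes from $S'=\bigcap V$. The integrality argument avoids the Krull-intersection description of normal domains and the reduction to factors, at the cost of the standard determinant trick. Both arguments hinge on $J$ being an $S$-ideal.
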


\begin{proof}
\ref{it:SJ}: By Lemma~\ref{lem:saturation}, $J^S$ is a $\mathscr B$-submodule
of $I$ which satisfies \eqref{eq:generic-condition}. So $J\subseteq J^S$ by
minimality, i.e.\ $J=J^S$ is an ideal of $S$.

\ref{it:extend}: We check this on each factor $S'$ of $S$ which is not a field. Let $V$ be the localization of $S'$ at a
height $1$ prime.
It is a DVR with uniformizer $t$. Write $JV=t^mV$, using
\ref{it:SJ}.  Localizing $\phi(F_*^eJ)\subseteq J$ gives
$\phi(F_*^eJV)\subseteq JV$. So for $z\in V$ we get
$\phi(F_*^ez)=t^{-m}\phi(F_*^e(t^{mp^e}z))\in t^{-m}JV=V$. And $S'$ is the
intersection of these $V$ \cite[Tag~\stacktag{031T}]{stacks-project}.

\ref{it:E}: The algebra $\cE$ preserves $J$, as the maps $\overline\phi$ do
and $J$ is an ideal of $S$. For $Q\in\Sigma$ some homogeneous $\phi\in\mathscr B_+$ is nonzero on $I_Q=J_Q$. So $\cE$ has a nonzero map of positive degree on every
factor of $S$. We have $\tauB(S;\cE)\subseteq J$, as $J$ is nonzero on every
factor. Conversely, $\tauB(S;\cE)\subseteq J\subseteq I$ is a
$\mathscr B$-submodule of $I$ which satisfies \eqref{eq:generic-condition}. Hence
$J\subseteq\tauB(S;\cE)$. Finally, by
uniqueness of extensions, $\overline{\phi\st\eta}=\overline\phi\st\overline\eta$.
So by \eqref{eq:source-rule} the sums, together with $\cE_0=S$,
are closed under products, where target multiplication by $w\in S$ is source
multiplication by $w^{p^e}$. Hence they form the algebra generated by $S$ and
the maps $\overline\phi$.
\end{proof}

\subsection{Proof of Theorem~\ref{thm:main}}

\begin{proof}[Proof of Theorem~\ref{thm:main}]
\setcounter{claim}{0}
By Proposition~\ref{prop:normalize-J}\ref{it:E} and
Corollary~\ref{cor:normal-principal} there are $e>0$ and $\theta\in\cE_e$,
nonzero on every factor of $S$, with
\begin{equation}\label{eq:theta-J}
 \tauB(S;S\langle\theta\rangle)=J.
\end{equation}
By Proposition~\ref{prop:normalize-J}\ref{it:E}, $\theta$ is a sum of maps
$\overline\phi$ with $\phi\in\mathscr B$ and with coefficients in $S$. These
coefficients need not come from $R$.
The plan: we choose $c\in J$, which will clear the coefficients
(Claim~\ref{cl:c-in-J}). We then take an iterate $\theta^{[N]}$ which maps
$F_*^{Ne}(c^4S)$ onto $J$ (Claim~\ref{cl:finite-iterate}). Then the source multiple $\eta=(F_*^{Ne}c^2)\cdot\theta^{[N]}$ still has test
ideal $J$
(Claim~\ref{cl:eta}). It comes from a map $\psi\in\mathscr B$
(Claim~\ref{cl:lift}). At the end we compare test modules of $\psi$ and
$\eta$.

\begin{claim}\label{cl:c-in-J}
There are $c\in J\cap S^\circ$ and $h\in S$ such that $S_c$ is regular,
$\theta_c$ generates $\Hom_{S_c}(F_*^eS_c,S_c)$ and $\theta(F_*^e(ch))=c$.
\end{claim}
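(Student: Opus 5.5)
We would adapt the proof of Lemma~\ref{lem:common-data} factorwise to the normal ring $S=\prod_iS_i$ and the map $\theta\in\cE_e$, which is nonzero on every factor. The plan has two parts. First, find a "good" element $b\in S^\circ$ on which regularity and generation by $\theta$ hold, together with a fixed-point relation. Second, multiply by an element of $J$ to land inside $J$; the final sentence of Lemma~\ref{lem:common-data} says all these properties survive passage to nonzero multiples.

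\emph{Step 1: good locus.} On each factor $S_i$, the image $\theta_i$ of $\theta$ is nonzero in $\cC_e(S_i)$. As in Claim~\ref{cl:good-locus}, the regular locus is open. The cokernel of $F_*^eS_i\to\Hom_{S_i}(F_*^eS_i,S_i)$, $x\mapsto x\cdot\theta_i$, is a finite module vanishing at the generic point, because $\Hom$ over the field $\Frac S_i$ is one-dimensional over $F_*^e\Frac S_i$. So there is $0\neq b_i\in S_i$ with $(S_i)_{b_i}$ regular and $\theta_i$ generating $\Hom$ there. Put $b=(b_i)_i\in S^\circ$; then $S_b=\prod(S_i)_{b_i}$ has both properties.

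\emph{Step 2: fixed element.} Since $S_b$ is regular, $S_b\to F_*^eS_b$ splits (Kunz plus finite presentation). The splitting is a source multiple of $\theta_b$, so $\theta_b(F_*^eu)=1$ with $u=y/b^N$. Choose $k$ with $k(p^e-1)\geq N$ and set $c_0=b^k$, $h_0=b^{k(p^e-1)-N}y$. Then $\theta(F_*^e(c_0h_0))=c_0$, exactly as in Claim~\ref{cl:fixed-element} with $p$ replaced by $p^e$.

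\emph{Step 3: move into $J$.} Since $J$ is an ideal of $S$ (Proposition~\ref{prop:normalize-J}\ref{it:SJ}) that is nonzero on every factor, we have $J\cap S^\circ\neq\emptyset$; pick $g\in J\cap S^\circ$ (for instance a sum of idempotent-weighted nonzero elements of $J\epsilon_i=J_{Q_i}\cap S_i$). Put $c=gc_0\in J$ and $h=g^{p^e-1}h_0$. Then
\[
 \theta(F_*^e(ch))=\theta\bigl(F_*^e(g^{p^e}c_0h_0)\bigr)=g\,\theta(F_*^e(c_0h_0))=gc_0=c.
\]
The ring $S_c$ is a localization of $S_b$, so it is regular, and $\theta_c$ still generates $\Hom_{S_c}(F_*^eS_c,S_c)$, since generation localizes.

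\emph{Main obstacle.} The only delicate point is working with a product ring and with $F^e$ instead of $F$: we need $\theta$ nonzero on each factor so that each $b_i\neq0$, and we need to find the element $g\in J\cap S^\circ$. Both follow directly from Corollary~\ref{cor:normal-principal} and from Proposition~\ref{prop:normalize-J}\ref{it:E}, which shows that $J$ is nonzero on every factor. Unlike in Lemma~\ref{lem:common-data}, no iterate statement is needed, because the claim is only about $\theta$ in degree $e$.
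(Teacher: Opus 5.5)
Your proposal is correct and is essentially the paper's argument. The paper applies Lemma~\ref{lem:common-data} (with $F^e$ in place of $F$) to each factor $S_i\langle\theta_i\rangle$, uses its last assertion to multiply each $c_i$ by a nonzero element of $\epsilon_iJ$, and sets $c=\sum_i\epsilon_ic_i$, $h=\sum_i\epsilon_ih_i$; you instead inline that lemma's two claims on the product ring and skip the squaring $c=c_0^2$, which is fine since that squaring only serves the HDBS property, not needed here.

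One harmless slip: the parenthetical identity $J\epsilon_i=J_{Q_i}\cap S_i$ is not right as written, since $J_{Q_i}=\Frac S_i$ makes the right side all of $S_i$. You only use $J\epsilon_i\neq0$, and that does hold.
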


\begin{proof}[Proof of Claim~\ref{cl:c-in-J}]
Let $S_i$ be the factors of $S$, $\epsilon_i$ the idempotents and
$0\neq\theta_i\in\cC_e(S_i)$ the image of $\theta$. Lemma~\ref{lem:common-data}
for $S_i\langle\theta_i\rangle$, with $F^e$ in place of $F$, gives $0\neq c_i\in S_i$,
$h_i\in S_i$ with $(S_i)_{c_i}$ regular, $\theta_i$ generating there by
\eqref{eq:full-after-localization} and $\theta_i(F_*^e(c_ih_i))=c_i$. By the
last assertion of that lemma, we can multiply $c_i$ by a nonzero element of
$\epsilon_iJ$. Adjust $h_i$ as in that lemma. Then $c=\sum_i\epsilon_ic_i$ and $h=\sum_i\epsilon_ih_i$ work,
because $J$ is an ideal of $S$.
\end{proof}

\begin{claim}\label{cl:finite-iterate}
There is $N\geq1$ with $\theta^{[N]}(F_*^{Ne}(c^4S))=J$.
\end{claim}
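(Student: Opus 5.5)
The plan is to realize $J$ as the union of an increasing chain of ideals $\theta^{[t]}(F_*^{te}(c^4S))$ and then use that $S$ is Noetherian. Since $J=\tauB(S;S\langle\theta\rangle)$ is computed factor by factor, and $\theta$, $c$, $h$ all decompose along the idempotents $\epsilon_i$, I will work on each factor $S_i$ (a normal domain) and reassemble at the end. Throughout, the Frobenius power $F^e$ plays the role of $F$, as in the proof of Corollary~\ref{cor:normal-principal}.

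\emph{Step 1: $c^4$ is an HDBS test element for $S\langle\theta\rangle$.} I will apply Lemma~\ref{lem:principal-test-element} to $\psi=\theta_i$ on $S_i$, with $s=e$ and the element $c_i$. Its first hypothesis is $c_i\in\theta_i(F_*^e(c_iS_i))$, which holds because $\theta(F_*^e(ch))=c$ by Claim~\ref{cl:c-in-J}. For its second hypothesis, $(S_i)_{c_i}$ is regular and $\theta_{c_i}$ generates $\Hom(F_*^e(S_i)_{c_i},(S_i)_{c_i})$. Hence $\Delta_{\theta_i}=0$ on $\Spec (S_i)_{c_i}$, and that pair is strongly $F$-regular. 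By \eqref{eq:principal-comparison} and \eqref{eq:tau-detects}, so is $((S_i)_{c_i},(S_i)_{c_i}\langle\theta_i\rangle)$. The lemma then makes $c_i^2$ an HDBS test element. Multiplying by $c_i^2\in S_i^\circ$ (\cite[Remark~3.20]{Schwede2011}) shows $c_i^4$ is also HDBS. Therefore \eqref{eq:hdbs-formula} for $c^4$, combined with \eqref{eq:principal-piece} (source multiples do not enlarge images), gives
\[
 J=\sum_{t\geq0}\theta^{[t]}\bigl(F_*^{te}(c^4S)\bigr).
\]

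\emph{Step 2: the terms form an increasing chain.} Put $I_t=\theta^{[t]}(F_*^{te}(c^4S))$. For $r\in S$ I compute, as in the proof of Lemma~\ref{lem:compose-theta},
\[
 \theta\bigl(F_*^e(c^{3p^e+1}hr^{p^e})\bigr)=c^3r\,\theta(F_*^e(ch))=c^4r .
\]
Since $c^{3p^e+1}h\in c^4S$, this gives $c^4S\subseteq\theta(F_*^e(c^4S))$, that is, $I_0\subseteq I_1$. Applying $\theta^{[t]}$ to this inclusion yields $I_t\subseteq\theta^{[t]}\bigl(F_*^{te}\theta(F_*^e(c^4S))\bigr)=I_{t+1}$.

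\emph{Step 3: conclude by Noetherianity.} The ascending chain $I_0\subseteq I_1\subseteq\cdots$ of ideals of the Noetherian ring $S$ stabilizes at some $I_N$ with $N\geq1$. Since the chain is increasing, the sum in Step 1 equals $I_N$. Hence $J=\theta^{[N]}(F_*^{Ne}(c^4S))$.

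The only real work is Step 1, namely checking that the hypotheses of Lemma~\ref{lem:principal-test-element} hold on each factor. It relies on the generation statement in Claim~\ref{cl:c-in-J} to obtain strong $F$-regularity after inverting $c$. The product structure causes no trouble: every object decomposes along the $\epsilon_i$, and $\tauB(S;\,\cdot\,)$ is defined factorwise.
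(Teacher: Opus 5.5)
Your proof is correct. Steps~2 and~3 are exactly the paper's argument: $c^4S\subseteq\theta(F_*^e(c^4S))$, the chain is increasing, and it stabilizes because $S$ is Noetherian. Where you differ is in showing that the union of the chain equals $J$.

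You go through Lemma~\ref{lem:principal-test-element}. Regularity of $S_c$ and generation by $\theta_c$ give strong $F$-regularity after inverting $c$. So $c^2$, and hence $c^4$, is an HDBS test element, and \eqref{eq:hdbs-formula} writes $J$ as $\sum_t\theta^{[t]}(F_*^{te}(c^4S))$.

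The paper instead argues by minimality. The union is a $\theta$-stable ideal containing $c^4\in S^\circ$, so it contains $J=\tauB(S;S\langle\theta\rangle)$. Conversely, it lies in $J$ because $c^4S\subseteq J$ (as $c\in J$ and $J$ is an ideal of $S$) and $J$ is $\theta$-stable.

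The paper's route is lighter. It uses only $c\in J\cap S^\circ$ and the relation $\theta(F_*^e(ch))=c$. It needs neither the regularity nor the generation statements of Claim~\ref{cl:c-in-J}, and it avoids the test-element lemma entirely.

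Your route needs those extra hypotheses. In exchange it never uses $c\in J$, since that comes out of the formula as the $t=0$ term. It is also the same mechanism the paper itself uses one step later, in Claim~\ref{cl:eta}.
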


\begin{proof}[Proof of Claim~\ref{cl:finite-iterate}]
As in the proof of Lemma~\ref{lem:compose-theta}, for $r\in S$
we have $\theta(F_*^e(c^{3p^e+1}hr^{p^e}))=c^3r\,\theta(F_*^e(ch))=c^4r$. So
$c^4S\subseteq\theta(F_*^e(c^4S))$. The ideals
$\theta^{[n]}(F_*^{ne}(c^4S))$ increase with $n$. They lie in $J$ because
$c^4S\subseteq J$ by Proposition~\ref{prop:normalize-J}\ref{it:SJ} and $J$
is $\theta$-stable. Their union is a $\theta$-stable ideal which contains
$c^4\in S^\circ$. So it contains $J$ by \eqref{eq:theta-J}. As $S$ is
Noetherian, some $N$ works.
\end{proof}

\begin{claim}\label{cl:eta}
The map $\eta=(F_*^{Ne}c^2)\cdot\theta^{[N]}\in\cE_{Ne}$ satisfies
$\eta(F_*^{Ne}S)=\eta(F_*^{Ne}J)=J$ and $\tauB(S;S\langle\eta\rangle)=J$.
\end{claim}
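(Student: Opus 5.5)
First I would compute the image. By \eqref{eq:source-rule}, $\eta(F_*^{Ne}x)=\theta^{[N]}(F_*^{Ne}(c^2x))$. For $x\in S$ this lies in $\theta^{[N]}(F_*^{Ne}J)$, since $c^2S\subseteq J$ by Proposition~\ref{prop:normalize-J}\ref{it:SJ}. That set lies in $J$, because $J=\tauB(S;S\langle\theta\rangle)$ is $\theta$-stable. So $\eta(F_*^{Ne}J)\subseteq\eta(F_*^{Ne}S)\subseteq J$. For the reverse, $c^2\in J$ gives $c^2(c^2S)=c^4S\subseteq c^2J$, so
\[
J=\theta^{[N]}(F_*^{Ne}(c^4S))\subseteq\theta^{[N]}(F_*^{Ne}(c^2J))=\eta(F_*^{Ne}J)
\]
by Claim~\ref{cl:finite-iterate}. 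This gives both equalities in the first assertion.

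For the test ideal, the inclusion $\tauB(S;S\langle\eta\rangle)\subseteq J$ is immediate. Indeed $S\langle\eta\rangle\subseteq\cE$, so \eqref{eq:monotonicity} applies factorwise together with Proposition~\ref{prop:normalize-J}\ref{it:E}. Alternatively, $J$ is an $\eta$-stable ideal that is nonzero on every factor, and minimality does the rest. This needs $\eta$ to be nonzero on each factor. That holds because $\eta(F_*^{Ne}J)=J$ and $J$ is nonzero on every factor of $S$.

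The reverse inclusion is the main point. Let $T=\tauB(S;S\langle\eta\rangle)$. It is $\eta$-stable and nonzero on each factor. I would show $J\subseteq T$ by a test-element argument like Lemma~\ref{lem:principal-test-element}, working on each factor $S_i$ separately. Because $\eta(F_*^{Ne}J)=J$ and $c\in J$, we get $c\in\eta(F_*^{Ne}J)$. That is not of the form $c\in\eta(F_*^{Ne}(cS))$, so instead I would use the image directly. If some power $c^k$ lies in $T$, then $J\subseteq T$. The reason is $J=\theta^{[N]}(F_*^{Ne}(c^4S))$ combined with the earlier argument: iterating $\eta$ on $T$ stays in $T$, and $\eta^{[n]}(F_*^{nNe}(c^kS))$ should grow to all of $J$. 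To get such a $c^k$, note that $T$ is nonzero on every factor. Every nonzero ideal of a domain contains an element $d\in S^\circ$, and since $T$ is $\eta$-stable it contains $\eta^{[n]}(F_*^{nNe}(dS))$ for all $n$. So it suffices to show that $(S_c,S_c\langle\eta_c\rangle)$ is strongly $F$-regular, which is Claim~\ref{cl:power-of-c} applied to $\eta$. On $S_c$, $c$ is a unit, so $\eta_c=\theta_c^{[N]}$ up to a unit source multiple. By Claim~\ref{cl:c-in-J}, $S_c$ is regular and $\theta_c$ generates, so $D_{\eta_c}=0$. Hence $(\Spec S_c,0)$ is strongly $F$-regular, and by \eqref{eq:principal-comparison} and \eqref{eq:tau-detects} so is the algebra of $\eta_c$. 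Claim~\ref{cl:power-of-c} then gives $c^k\in\eta^{[n]}(F_*^{nNe}(dS))\subseteq T$.

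It remains to pass from $c^k\in T$ to $J\subseteq T$. For this I would use a second fixed-point identity. Since $\theta^{[N]}(F_*^{Ne}(ch'))=c$ for a suitable $h'$ obtained by iterating Claim~\ref{cl:c-in-J}, we get $\eta(F_*^{Ne}(c^{p^{Ne}j-1}h'))=c^{j+1}$ for large $j$. So high powers of $c$ map down to lower powers, and iterating brings $c^k$ down to $c^4$. From $c^4S\subseteq T$, applying $\eta$ then gives $\eta(F_*^{Ne}(c^4S))\subseteq T$. The genuinely delicate step is the final one: showing that iterates of $\eta$ on $c^4S$ exhaust $J$, given that $\eta$ carries the extra factor $c^2$. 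I would aim to handle it via $c^4S\subseteq T$ and $\eta(F_*^{Ne}J)=J$. If $T$ is $\eta$-stable and contains $c^4S$, then the ideals $\eta^{[n]}(F_*(c^4S))$ increase by the same trick as in Claim~\ref{cl:finite-iterate}. Their union is an $\eta$-stable ideal inside $J$ containing $c^4$, and it equals $J$ by the minimality of $J$ applied to $\theta$. This last identification is where care is needed, since it has to compare $\eta$-stability with $\theta$-stability.
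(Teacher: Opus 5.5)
The first half of your proof is fine: the image computation, the inclusion $\tauB(S;S\langle\eta\rangle)\subseteq J$, and the argument that $(S_c,S_c\langle\eta_c\rangle)$ is strongly $F$-regular, so that $T=\tauB(S;S\langle\eta\rangle)$ contains some $c^k$. The gap is the passage from $c^k\in T$ to $J\subseteq T$. Since $J=\eta(F_*^{Ne}(c^2S))$, what you need is $c^2\in T$.

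Your identity $\eta(F_*^{Ne}(c^{qj-1}h'))=c^{j+1}$, with $q=p^{Ne}$, gives $c^{j+1}\in T$ from $c^m\in T$ only when $qj-1\geq m$. To reach $c^2$ it needs $c^{q-1}\in T$, which is no easier than the goal when $q\leq 3$; for $q\in\{2,3\}$ the descent never gets below $c^3$. Your fallback is also not valid. You claim $\bigcup_n\eta^{[n]}(F_*^{nNe}(c^4S))$ equals $J$ ``by minimality of $J$ applied to $\theta$''. But that union is only known to be $\eta$-stable, while the minimality of $J=\tauB(S;S\langle\theta\rangle)$ is among $\theta$-stable ideals. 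An $\eta$-stable ideal need not be $\theta$-stable, since $\eta$ carries the extra factor $c^2$. You flagged this step as delicate yourself, and as written it does not close.

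The missing observation is one you dismissed: the fixed-point hypothesis \eqref{eq:principal-fixed-q} does hold for $\eta$. By definition $\eta(F_*^{Ne}(c^2x))=\theta^{[N]}(F_*^{Ne}(c^4x))$, so Claim~\ref{cl:finite-iterate} gives
\[
 c\in J=\eta(F_*^{Ne}(c^2S))\subseteq\eta(F_*^{Ne}(cS)).
\]
Hence Lemma~\ref{lem:principal-test-element} applies to $\eta$ on each factor. Claim~\ref{cl:c-squared} gives $c^2\in\eta^{[l]}(F_*^{lNe}(c^{p^{lNe}+1}S))$, which brings any $c^k\in T$ down to $c^2\in T$. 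So $c^2$ is HDBS, and \eqref{eq:principal-tau-formula-q} yields $T\supseteq\eta(F_*^{Ne}(c^2S))=J$. This is exactly the paper's proof; with it, your second fixed-point identity and the union argument are unnecessary.
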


\begin{proof}[Proof of Claim~\ref{cl:eta}]
By Claim~\ref{cl:finite-iterate},
$\eta(F_*^{Ne}(c^2S))=\theta^{[N]}(F_*^{Ne}(c^4S))=J$. As $c^2S\subseteq J$
and $J$ is $\theta$-stable,
\[
 J=\eta(F_*^{Ne}(c^2S))\subseteq\eta(F_*^{Ne}J)\subseteq\eta(F_*^{Ne}S)
 =\theta^{[N]}(F_*^{Ne}(c^2S))\subseteq J.
\]
In particular $c\in\eta(F_*^{Ne}(cS))$. Over $S_c$, $\eta$ is a source multiple of $\theta^{[N]}$ by unit $c^2$.
$\theta^{[N]}$ generates $\Hom_{S_c}(F_*^{Ne}S_c,S_c)$ by
Claim~\ref{cl:c-in-J} and \cite[Lemma~3.9]{SchwedeFAdjunction2009}. So
$(S_c,S_c\langle\eta_c\rangle)$ is strongly $F$-regular
\cite[Lemma~3.22]{Schwede2011}. Now apply Lemma~\ref{lem:principal-test-element},
on each factor of $S$. \eqref{eq:principal-tau-formula-q} gives
$\tauB(S;S\langle\eta\rangle)\supseteq\eta(F_*^{Ne}(c^2S))=J$. Other
inclusion holds as $J$ is $\eta$-stable and nonzero on every factor.
\end{proof}

\begin{claim}\label{cl:lift}
There is $\psi\in\mathscr B_{Ne}$ with $\overline\psi=\eta$.
\end{claim}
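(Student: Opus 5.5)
We want $\psi\in\mathscr B_{Ne}$ with $\overline\psi=\eta$. The plan is to expand $\theta^{[N]}$ as a sum with coefficients in $S$, and then use the factor $c^2$ in $\eta$ to push those coefficients into $A$.

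First, $\theta^{[N]}\in\cE_{Ne}$. So by Proposition~\ref{prop:normalize-J}\ref{it:E} we can write
\[
\theta^{[N]}=\sum_i(F_*^{Ne}u_i)\cdot\overline{\phi_i}
\]
with $\phi_i\in\mathscr B_{Ne}$ and $u_i\in S$. Then
\[
\eta=\sum_i(F_*^{Ne}(c^2u_i))\cdot\overline{\phi_i}.
\]
By Proposition~\ref{prop:normalize-J}\ref{it:SJ}, $cS\subseteq J\subseteq A$. Hence $c^2u_i\in cS\subseteq A$, so each $c^2u_i$ is the image of some $r_i\in R$ under $R\to A=R/H$.

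Next, put $\psi=\sum_i(F_*^{Ne}r_i)\cdot\phi_i$. This lies in $\mathscr B_{Ne}$ because the homogeneous pieces of a Cartier algebra are stable under source multiplication by $R=\mathscr B_0$: indeed $\phi\st r=(F_*^er)\cdot\phi$.

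It remains to check $\overline\psi=\eta$. By uniqueness of the extension from $J$ to $Q(A)$, it suffices to check equality on $F_*^{Ne}J$. For $x\in J$ we have
\[
\psi(F_*^{Ne}x)=\sum_i\phi_i(F_*^{Ne}(r_ix)).
\]
Since $H$ annihilates $J$, $r_ix$ depends only on the image $c^2u_i\in A$. So the right-hand side equals $\sum_i\overline{\phi_i}(F_*^{Ne}(c^2u_ix))=\eta(F_*^{Ne}x)$. Both $\psi$ and $\eta$ are defined on $Q(A)$ by extension from $J$, and they agree there, so $\overline\psi=\eta$.

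The only delicate point is this last verification: the source action of $R$ on maps defined only on $J$ has to be compatible with the $A$- and $S$-structures. This works because $R$ acts on $J$ through $A=R/H$, and because $c^2u_i\in A$ is what makes the lift to $R$ possible. No step looks like a serious obstacle; it is bookkeeping.
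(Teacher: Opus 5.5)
Your proof is correct and follows essentially the same route as the paper. You expand $\theta^{[N]}$ via Proposition~\ref{prop:normalize-J}\ref{it:E}, absorb $c^2$ into the coefficients, lift them to $R$, and conclude by uniqueness of extensions from $J$. The only difference is that the paper takes the unique lift $r_i\in J$ (possible since $J\cap H=0$), which makes $\psi(F_*^{Ne}I)\subseteq J$ immediate in the image computation that follows; for the claim itself your arbitrary lift $r_i\in R$ is enough, because $H$ annihilates $J$.
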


\begin{proof}[Proof of Claim~\ref{cl:lift}]
By Proposition~\ref{prop:normalize-J}\ref{it:E} we can write
$\theta^{[N]}=\sum_i(F_*^{Ne}u_i)\cdot\overline{\phi_i}$ with
$\phi_i\in\mathscr B_{Ne}$ and $u_i\in S$. Thus
$\eta=\sum_i(F_*^{Ne}(c^2u_i))\cdot\overline{\phi_i}$. Here $c^2u_i\in J$, as $J$ is an ideal of $S$ containing $c$
(Proposition~\ref{prop:normalize-J}\ref{it:SJ}). As $J\to A$ is injective,
$c^2u_i$ has a unique preimage $r_i\in J\subseteq R$. Put $\psi=\sum_i(F_*^{Ne}r_i)\cdot\phi_i\in\mathscr B_{Ne}$. For
$x\in J$ the image of $r_ix$ in $A$ is $c^2u_ix$. So
$\psi(F_*^{Ne}x)=\eta(F_*^{Ne}x)$. By uniqueness of extensions,
$\overline\psi=\eta$.
\end{proof}

Now we compare $\psi$ with $\eta$. Put $K=\tau(I,R\langle\psi\rangle)$. Then
$K\subseteq J$ by \eqref{eq:module-monotone}. For $Q\in\Sigma$, $\psi$ acts on
$I_Q=J_Q$ as $\eta$ does. This action is nonzero, as $\eta(F_*^{Ne}S)=J$ is
nonzero on every factor of $S$. So $\Sigma\subseteq\Sigma(R\langle\psi\rangle,I)$.
We have $K_Q=I_Q=J_Q$ for $Q\in\Sigma$ by \eqref{eq:generic-condition}. By Lemma~\ref{lem:saturation}, $K^S$ is an
ideal of $S$ which is nonzero on every factor. It is stable under $\psi$,
i.e.\ under $\eta=\overline\psi$. So $K^S$ contains
$\tauB(S;S\langle\eta\rangle)$, which is $J$ by Claim~\ref{cl:eta}. Thus
$J\subseteq K^S\subseteq K\subseteq J$. This gives $\tau(I,R\langle\psi\rangle)=J$.

It remains to compute the image of $\psi$. As $r_i\in J$, we have
$r_iI\subseteq J$. Every $\phi_i$ maps $J$ into $J$. So
$\psi(F_*^{Ne}I)\subseteq J$. And $\psi(F_*^{Ne}J)=\eta(F_*^{Ne}J)=J$ by
Claim~\ref{cl:eta}. So $\psi(F_*^{Ne}I)=\psi(F_*^{Ne}J)=J$ and $s=Ne$
works.
\end{proof}

\begin{remark}\label{rem:one-image}
For a normal $F$-finite domain $R$, $I=R$ and $\mathscr B=\cC(R)$,
the map $\psi$ of Theorem~\ref{thm:main} works in
Corollary~\ref{thm:intro-principalization}. Writing $J=\tauB(R)$, we have
\[
 \psi(F_*^sR)=\psi(F_*^sJ)=J.
\]
It follows by induction that
$\psi^{[n]}(F_*^{ns}R)=J$ for every $n\geq1$.
Non-$F$-pure ideal $\sigma(R,\Delta_\psi)$ is the stable image of
these iterates; cf.\
\cite[Definition~3.2 and the discussion following
Proposition~3.4]{PatakfalviSchwedeTucker2017}.
Hence $\sigma(R,\Delta_\psi)=J=\tauB(R)$.
In particular, for every prime $P$,
$(\Spec R_P,\Delta_\psi|_{\Spec R_P})$ is sharply $F$-pure if and only if
$R_P$ is strongly $F$-regular. Sharp $F$-purity is equivalent
to surjectivity of $\psi_P$
\cite[Theorem~3.11(d)]{SchwedeFAdjunction2009}. By \eqref{eq:localization}
\[
 \psi_P(F_*^sR_P)=J_P=\tauB(R_P),
\]
 and $\tauB(R_P)=R_P$ if and only if
$R_P$ is strongly $F$-regular.

We can also choose the boundary of Theorem~\ref{thm:intro-global} with this
property. Let $\psi\in\cC_s(S)$ and homogeneous $c$ be as in
Proposition~\ref{prop:homogeneous-interpolation}. Put
$J_\psi=\tauB(S;S\langle\psi\rangle)$. This agrees with
$\tauB(S)$ on $U$. So we run Claims~\ref{cl:finite-iterate}
and~\ref{cl:eta} with $J_\psi$ in place of $J$. By
Lemma~\ref{lem:compose-theta}, $(S_c,S_c\langle\psi_c\rangle)$ is strongly
$F$-regular and $c\in\psi(F_*^s(c^2S))\subseteq J_\psi$. So
$c=\psi(F_*^s(ch))$ for some $h\in S$. Then the computation in
Claim~\ref{cl:finite-iterate} gives $N\geq1$ with
$\psi^{[N]}(F_*^{Ns}(c^4S))=J_\psi$. As in Claim~\ref{cl:eta} the
homogeneous map $\eta=(F_*^{Ns}c^2)\cdot\psi^{[N]}$ satisfies
$\eta(F_*^{Ns}S)=\eta(F_*^{Ns}J_\psi)=J_\psi$. Over $S_c$, $\eta$ is a source
multiple of $\psi^{[N]}$ by a unit. So $(S_c,S_c\langle\eta_c\rangle)$ is
strongly $F$-regular by Lemma~\ref{lem:iterate} and \eqref{eq:tau-detects}. As $c\in J_\psi=\eta(F_*^{Ns}(c^2S))$, Lemma~\ref{lem:principal-test-element}
gives $\tauB(S;S\langle\eta\rangle)\supseteq J_\psi$. Equality holds as $J_\psi$ is
$\eta$-stable. So Proposition~\ref{prop:Proj-descent}
applies to $\eta$. On a chart $D_+(f)=\Spec B$ put $C=B[f,f^{-1}]$. By \eqref{eq:psi-chart},
$\eta_f$ is a source multiple by a unit of the Laurent extension of the induced map $\varphi_f$. This unit preserves every extended ideal $LC$. So \eqref{eq:Laurent-residue-map} gives
$\eta_f(F_*^{Ns}(LC))=\varphi_f(F_*^{Ns}L)C$ for every ideal $L\subseteq B$.
Also $(J_\psi)_f=\tauB(S)_f$ by Proposition~\ref{prop:homogeneous-interpolation}. This is $\tauB(B)C$ by \eqref{eq:localization} and Lemmas~\ref{lem:laurent-maps} and~\ref{lem:laurent-test-ideal}. So localizing the $2$ image equalities for
$\eta$ at $f$, with $L=B$ and $L=\tauB(B)$ and contracting to $B$, we get
$\varphi_f(F_*^{Ns}B)=\varphi_f(F_*^{Ns}\tauB(B))=\tauB(B)$.
Now the argument above works on every chart, with $\Delta=0$ on components of
dimension $0$.
\end{remark}

\section{Projective case}\label{sec:projective}

Let $S=\bigoplus_{n\geq0}S_n$ be a standard graded normal $F$-finite domain,
i.e.\ $S$ is generated by $S_1$ over $S_0$. Assume $S_1\neq0$. Put
$Y=\Proj S$ and $U=\Spec S\setminus V(S_+)$. We first build a homogeneous map
$\psi$ such that $\tauB(S;S\langle\psi\rangle)$ and $\tauB(S)$ agree on $U$.
Then we descend $\psi$ to $Y$. Interpolation is the same as in
Section~\ref{sec:affine}, we only have to keep track of degrees. To descend to
$Y$ we compare test ideals on Laurent extensions (Section~\ref{sec:laurent},
compare \cite[Application~5.2]{BravoSmith2002}).

Following \cite[\S5.3]{SchwedeSmith2010}, we grade $F_*^eS$ by
$\deg(F_*^es)=p^{-e}\deg s$. A map $\phi\in\cC_e(S)$ is homogeneous of degree
$\delta$ if it sends elements of degree $\gamma$ to elements of degree
$\gamma+\delta$. Since $F_*^eS$ is a finitely generated graded $S$-module, every
$\phi\in\cC_e(S)$ is a finite sum of homogeneous maps, its homogeneous
components. For homogeneous $\phi\in\cC_d(S)$ and $\eta\in\cC_f(S)$,
\begin{equation}\label{eq:graded-product-degree}
 \deg(\phi\st\eta)=\deg\phi+p^{-d}\deg\eta,
\end{equation}
because for homogeneous $s\in S$
\[
 \deg\bigl((\phi\st\eta)(F_*^{d+f}s)\bigr)
   =p^{-d}\bigl(p^{-f}\deg s+\deg\eta\bigr)+\deg\phi.
\]

We also need the charts of $Y$. Fix nonzero generators $x_1,\ldots,x_\ell$ of
$S_1$ over $S_0$. For $0\neq x\in S_1$, $D_+(x)$ is Spec of
$(S_x)_0=S_0[x_1/x,\ldots,x_\ell/x]$, which is $F$-finite. If $0\neq g\in S$ is homogeneous and divisible by $x$, then 
\begin{equation}\label{eq:laurent-chart}
 S_g=(S_g)_0[x,x^{-1}]
\end{equation}
(compare \cite[Chapter~3, Proposition~4.13(e)]{SchwedeSmithBook2026}). Also $(S_g)_0$ is normal. To
see this, suppose that
$u\in\Frac((S_g)_0)$ is integral over $(S_g)_0$. Then $u\in S_g$. $u$ has degree $0$ because $bu\in(S_g)_0$ for some $0\neq b\in(S_g)_0$. $(S_g)_0$ has $K(Y)$ as fraction field. Hence for
$S_g$ that is $K(Y)(x)$.

\subsection{Homogeneous maps with prescribed values}

First we make the data of Section~\ref{sec:affine} homogeneous.

\begin{lemma}\label{lem:homogeneous-common-data}
For $\cE=\cC(S)$, we can choose $\theta$ and $c$ in
Lemma~\ref{lem:common-data} homogeneous. By its last assertion, we may replace $c$ by any nonzero homogeneous multiple.
\end{lemma}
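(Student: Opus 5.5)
We follow the proof of Lemma~\ref{lem:common-data} and check that every choice can be made homogeneous.

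\emph{Choice of $\theta$.} Since $S$ is a normal domain, $\cC_1(S)\neq0$. Every $\phi\in\cC_1(S)$ is a finite sum of its homogeneous components, and a nonzero $\phi$ has a nonzero component. So we may take a nonzero homogeneous $\theta\in\cC_1(S)$.

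\emph{Choice of $b$ in Claim~\ref{cl:good-locus}.} The cokernel $M$ of $F_*S\to\Hom_S(F_*S,S)$, $x\mapsto x\cdot\theta$, is a finitely generated graded $S$-module, because $\theta$ is homogeneous; the gradings involve $p^{-1}\Z$, which is harmless. Its annihilator is therefore a homogeneous ideal. The singular locus of $S$ is defined by a homogeneous ideal as well: it is closed, and it is stable under the $\mathbb G_m$-action, or one can see directly that the defining Jacobian/Fitting-type ideal is graded. Both ideals are nonzero, since both conditions hold at the generic point. A nonzero homogeneous ideal contains a nonzero homogeneous element, and the product of two such elements is again homogeneous. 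This gives a homogeneous $b\neq0$ with $S_b$ regular and $\theta_b$ generating. To avoid the $\mathbb G_m$ argument for the singular locus, one can instead use that the singular locus of a graded ring is defined by a graded ideal, since the localization $S_{(P)}$ at the largest homogeneous prime inside $P$ is regular whenever $S_P$ is. Alternatively, first pick any $b$ and then replace it by a nonzero homogeneous element of the radical ideal defining the complement of the good locus, checking that this ideal is graded.

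\emph{Claim~\ref{cl:fixed-element} with homogeneous data.} We need a homogeneous $h_0$ with $\theta(F_*(b^kh_0))=b^k$. Take any $u$ with $\theta_b(F_*u)=1$. Because $\theta_b$ is homogeneous of some degree $\delta$ on the graded ring $S_b$, we may replace $u$ by its homogeneous component of degree $-p\delta$, and that component alone maps to $1$. Writing this component as $y/b^N$ makes $y$ homogeneous, so $h_0=b^{k(p-1)-N}y$ is homogeneous. Then $c=b^{2k}$ and $h=c_0^{p-1}h_0$ are homogeneous. The remaining properties, HDBS, regularity of $S_c$ and \eqref{eq:full-after-localization}, do not depend on homogeneity, so they hold as in the original proof.

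The final assertion is immediate. The last statement of Lemma~\ref{lem:common-data} allows replacing $c$ by any nonzero multiple $gc$, with new $h'=g^{p-1}h$. If $g$ is homogeneous, then $gc$ and $h'$ stay homogeneous.

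The main obstacle is the step producing a homogeneous $b$, namely showing that the relevant open loci are complements of closed sets cut out by homogeneous ideals. I would handle the generation locus through the graded cokernel as above, and the regular locus through the fact that localizing at homogeneous primes detects regularity in graded rings.
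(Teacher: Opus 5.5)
This is essentially the paper's argument. You choose a nonzero homogeneous $\theta\in\cC_1(S)$, cut out the non-generation locus by the annihilator of the graded cokernel and the nonregular locus by a homogeneous ideal, take a nonzero homogeneous $b$ in their product, and set $c=b^{2k}$. Making $h$ homogeneous as well is correct, but the statement only concerns $\theta$ and $c$, so it is not needed.

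The only step that needs care is the one you single out, and there your justification is loose.

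\emph{The paper's route.} The paper settles it with Kunz's theorem. With $r=[K:K^p]$ one has $\operatorname{Fitt}_{r-1}(F_*S)=0$, so the nonregular locus is $V(\operatorname{Fitt}_r(F_*S))$. This ideal is homogeneous because $F_*S$ has a graded presentation. That is the precise form of your ``Fitting-type ideal''. A Jacobian ideal is not available in this generality, since $S$ is only of finite type over an $F$-finite ring.

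\emph{Your $\mathbb G_m$ route.} It also works, once you say why. The action map $\sigma$ and the projection $\mathbb G_m\times\Spec S\to\Spec S$ are both smooth. So both pull the singular locus back to the singular locus of $\mathbb G_m\times\Spec S$. For the radical ideal $I$ of the singular locus, this gives $\sigma^*(I)\subseteq I[t,t^{-1}]$, because $I[t,t^{-1}]$ is radical. Then $\sigma^*(f)=\sum_nf_nt^n$ puts every homogeneous component of $f\in I$ into $I$.

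\emph{Your third alternative.} This one has the implication backwards. ``$S_{P^*}$ is regular whenever $S_P$ is'' is just localization and gives nothing. Homogeneity of the ideal of a closed set means the set is stable under $P\mapsto P^*$. For the singular locus, that requires the converse: $S_P$ is regular whenever $S_{P^*}$ is. That converse is a nontrivial fact about graded rings.
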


\begin{proof}
The proof of Lemma~\ref{lem:common-data} gives $c=b^{2k}$ for some $k\geq1$,
provided $S_b$ is regular and $\theta_b$ generates
$\Hom_{S_b}(F_*S_b,S_b)$. Thus it suffices to choose $\theta$ and $b$
homogeneous.

Since $S$ is $F$-finite domain, we can choose a nonzero homogeneous
$\theta\in\cC_1(S)$. The locus where $\theta$ fails to generate
$\cC_1(S)$ is cut out by the homogeneous ideal
$\operatorname{Ann}_S(\cC_1(S)/(F_*S)\cdot\theta)$.

The nonregular locus is also cut out by a homogeneous ideal. Put
$K=\Frac S$ and $r=[K:K^p]$. Since $S$ is a domain,
$\operatorname{Fitt}_{r-1}(F_*S)=0$. By \cite{Kunz1969} and
\cite[Tag~\stacktag{07ZD}]{stacks-project}, the nonregular locus is therefore
$V(\operatorname{Fitt}_r(F_*S))$. This Fitting ideal is homogeneous because
$F_*S$ has a graded presentation.

Neither locus contains the generic point. So both ideals are nonzero.
Choose a nonzero homogeneous $b$ in their product. Then $S_b$ is regular,
$\theta_b$ generates and $c=b^{2k}$ is homogeneous.
\end{proof}

Recall the following well-known lemma.

\begin{lemma}(\cite[Chapter~3, Proposition~4.2(b)]{SchwedeSmithBook2026})\label{lem:tau-homogeneous}
$\tauB(S)$ is homogeneous.
\end{lemma}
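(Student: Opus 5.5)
We have to show that $\tauB(S)$ is a homogeneous ideal of the graded ring $S$. The plan is the standard torus-action argument: a homogeneous ideal is one stable under the grading, and $\tauB(S)$ is intrinsic, so it should be invariant under every symmetry of $S$ that respects the grading.

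We would like to use the automorphisms $s\mapsto\lambda^{\deg s}s$. The base $S_0$ need not contain enough units, so we work instead with the grading coaction $\mu\colon S\to S[t,t^{-1}]$, $s\mapsto st^{\deg s}$, together with the inclusion $\iota\colon S\to S[t,t^{-1}]$. An ideal $J\subseteq S$ is homogeneous if and only if $\mu(J)\subseteq J\,S[t,t^{-1}]$. Indeed, the coefficient of $t^n$ in $\mu(x)$ is the degree $n$ component of $x$.

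\textbf{Step 1: reduce to a statement about Cartier maps.} By \eqref{eq:hdbs-formula} with a homogeneous HDBS test element $c$, we have $\tauB(S)=\sum_e\sum_{\phi\in\cC_e(S)}\phi(F_*^e(cS))$. Such a $c$ exists by Lemma~\ref{lem:homogeneous-common-data} applied to $\cE=\cC(S)$. Every $\phi\in\cC_e(S)$ is a finite sum of homogeneous components, and so $\phi(F_*^e(cS))$ is contained in the sum of the ideals $\phi_\delta(F_*^e(cS))$ over the components $\phi_\delta$. Each $\phi_\delta$ again lies in $\cC_e(S)$. So
\[
 \tauB(S)=\sum_e\sum_{\phi\text{ homogeneous}}\phi(F_*^e(cS)).
\]

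\textbf{Step 2: images of homogeneous maps on homogeneous ideals are homogeneous.} Let $\phi$ be homogeneous and let $x=\sum_\gamma x_\gamma\in cS$ be written in homogeneous components. Because $c$ is homogeneous, $cS$ is homogeneous, so each $x_\gamma$ lies in $cS$. Then $\phi(F_*^ex)=\sum_\gamma\phi(F_*^ex_\gamma)$, and each summand is homogeneous of degree $p^{-e}\gamma+\delta$. Hence $\phi(F_*^e(cS))$ is generated by homogeneous elements. A sum of homogeneous ideals is homogeneous, which completes the argument.

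One subtlety is that the degrees of $F_*^eS$ lie in $p^{-e}\Z$. This needs care: if $\phi\neq0$ is homogeneous, then the values $\phi(F_*^ex_\gamma)$ are nonzero only when $p^{-e}\gamma+\delta$ is an integer. In any case they lie in $S$ and are homogeneous in its $\Z$-grading, so this causes no problem.

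\textbf{Main obstacle.} The key point is the existence of a \emph{homogeneous} HDBS test element, together with the decomposition of every Cartier map into homogeneous components. Both are already available: the first from Lemma~\ref{lem:homogeneous-common-data}, the second from the finite generation of the graded module $F_*^eS$. The rest is bookkeeping, so I expect no serious difficulty beyond checking that the homogeneous components of a map lie in $\cC_e(S)$. That holds because they are $S$-linear, as each homogeneous component of an $S$-linear map between graded modules is again $S$-linear.
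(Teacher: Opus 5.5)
Your proof is correct. It cannot follow the paper's argument, because the paper gives none here: it treats the statement as well known and cites \cite[Chapter~3, Proposition~4.2(b)]{SchwedeSmithBook2026}.

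You instead derive it from tools already in the paper. These are the homogeneous HDBS element of Lemma~\ref{lem:homogeneous-common-data}, the formula \eqref{eq:hdbs-formula}, and the splitting of every $\phi\in\cC_e(S)$ into finitely many $S$-linear homogeneous components. There is no circularity. The proof of Lemma~\ref{lem:homogeneous-common-data} uses only that the non-generation locus of a homogeneous $\theta$ and the Fitting ideal $\operatorname{Fitt}_r(F_*S)$ are homogeneous, never that $\tauB(S)$ is.

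Your Step~2 is sound. For distinct $\gamma$ the values $\phi(F_*^ex_\gamma)$ have distinct degrees $p^{-e}\gamma+\delta$. So they are exactly the homogeneous components of $\phi(F_*^ex)$, and each lies in $\phi(F_*^e(cS))$ because $x_\gamma\in cS$. The opening discussion of the coaction $\mu$ is never used and can be dropped.

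What your route buys is a self-contained argument that isolates the two inputs actually needed: a homogeneous HDBS test element, and closure of the Cartier algebra under taking homogeneous components. It therefore applies verbatim to any Cartier subalgebra of $\cC(S)$ with these two properties. An example is $S\langle\psi\rangle$ for a homogeneous $\psi$, with the homogeneous element $c^2$ as in the proof of Lemma~\ref{lem:compose-theta}. The citation buys only brevity.
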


\begin{proposition}\label{prop:homogeneous-interpolation}
Assume $\dim Y>0$. Then there exist $s>0$ and a nonzero homogeneous
$\psi\in\cC_s(S)$ such that
$\tauB(S;S\langle\psi\rangle)\cO_U=\tauB(S)\cO_U$.
\end{proposition}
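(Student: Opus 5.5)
We follow the proof of Proposition~\ref{prop:finite-interpolation} verbatim, keeping every choice homogeneous and making all prescribed values have a common degree. First, by Lemma~\ref{lem:tau-homogeneous} choose homogeneous generators $b_1,\ldots,b_r$ of $\tauB(S)$. Since $\tauB(S)\cO_U$ is all we need, replace them by the elements $a_{ij}=b_ix_j^{M-\deg b_i}$ for a fixed large $M$. These are homogeneous of the same degree $M$ and lie in $\tauB(S)$. On each $D(x_j)\cap U$ they generate $\tauB(S)$, so $(a_{ij})\cO_U=\tauB(S)\cO_U$. Relabel them $a_1,\ldots,a_m$. It then suffices to find a nonzero homogeneous $\psi$ with all $a_i\in\tauB(S;S\langle\psi\rangle)$, because $\tauB(S;S\langle\psi\rangle)\subseteq\tauB(S)$ by \eqref{eq:monotonicity}.

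Next take homogeneous $\theta\in\cC_1(S)$ and homogeneous $c$ from Lemma~\ref{lem:homogeneous-common-data}. We redo Claim~\ref{cl:dual-basis} with graded data. The module $F_*^eS$ is finitely generated and graded, so generic freeness can be achieved after inverting a homogeneous element; enlarge $c$ by it. Then $(F_*^eS)_c$ is a graded free $S_c$-module with a homogeneous basis, and the coordinate functions $v_i^*$ are homogeneous. Clearing denominators with powers of the homogeneous $c$ gives homogeneous $w_j\in F_*^e(c^2S)$ and homogeneous $\beta_i$ with $\beta_i(w_j)=\delta_{ij}c^L$.

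In Lemma~\ref{lem:synchronization}, the element $h$ with $c=\theta(F_*(ch))$ can be replaced by its homogeneous component of the right degree, so $\rho$ is homogeneous. Each $a_i$ is homogeneous, and $\tauB(S)$ is computed by \eqref{eq:hdbs-formula} with homogeneous maps. Taking homogeneous components of the $\phi_{ij}$ of the degree dictated by $\deg a_i$, we get homogeneous $\alpha_i$ with $\alpha_i(F_*^Dc^L)=a_i$. Put $a_0=c$.

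The main obstacle is that $\psi_0=\sum_i\alpha_i\st\beta_i$ must be homogeneous, and by \eqref{eq:graded-product-degree} the summands have degrees $\deg\alpha_i+p^{-D}\deg\beta_i$. The value $a_0=c$ has the wrong degree. I would fix this in two ways. First, replace $a_0$ by a homogeneous multiple $c'=c\,x^{k}$, which is still an HDBS element, so that $\deg c'=M$ and all values share one degree. Second, note that $w_j$ and $\beta_i$ have degrees tied by $\deg\beta_i(w_j)=\deg c^L$. Each summand $\alpha_i\st\beta_i$ then sends $F_*^Dw_i$ to degree $M$ while having degree $M-p^{-D}\deg w_i-\deg\ldots$, which differs across $i$.

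If the degrees still differ, the fallback is to take the homogeneous component $\psi_0'$ of $\psi_0$ of one fixed degree. That component still sends each homogeneous $F_*^Dw_j$ to the degree-$M$ part of $a_j$, which is $a_j$. This works because $a_j$ is homogeneous of degree $M$ and the $w_j$ are homogeneous: a single degree component captures each value precisely when $\deg w_j$ is constant. So I would also arrange that all $w_j$ have one degree, multiplying each $w_j$ by suitable homogeneous $F_*^e$-elements and adjusting $\beta_j$ on the source. This is possible because free generators can be rescaled by homogeneous units of $S_c$, for example powers of $x$ after additionally inverting $x$.

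Finally, apply Lemma~\ref{lem:compose-theta} to $\psi=\psi_0'\st\theta$, which is homogeneous. This gives all $a_j\in\tauB(S;S\langle\psi\rangle)$, hence the equality on $U$.
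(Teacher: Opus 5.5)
There is a genuine gap at the step where you put all the $w_j$ into one degree. A symptom is that your argument never uses $\dim Y>0$.

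Your component trick itself is sound, and slightly simpler than the paper's degree bookkeeping. If all $F_*^Dw_j$ have one degree $\gamma$ and all $a_j$, including $a_0$, have degree $M$, then the degree-$(M-\gamma)$ component of $\psi_0$ still sends $F_*^Dw_j$ to $a_j$.

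Your way of equalizing degrees, however, does not work. Rescaling basis vectors by homogeneous units of $S_c$ keeps $\beta_i(w_j)=\delta_{ij}c^L$, but shifts degrees only by integers, while $\deg w_j\in p^{-e}\Z$. Rescaling one $w_j$ by a unit $F_*^e(x^{n_j})$ of $F_*^eS_c$ and compensating on the source of $\beta_j$ gives $\beta_i'(w_j')=\beta_i\bigl(F_*^e(x^{n_j-n_i})w_j\bigr)$ for $i\neq j$. This need not vanish.

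What you need is $m+1$ dual pairs with all $w_j$ in one class modulo $\Z$. Asking $\operatorname{rank}_SF_*^eS\geq m+1$ does not give this. With $x\mid c$ one has $S_c=T_0[x,x^{-1}]$, $T_0=(S_c)_0$. The elements of $F_*^eS_c$ with degrees in a fixed class modulo $\Z$ form a summand of rank only $[K(Y):K(Y)^{p^e}]$. For example, take $S=k[x]$ with $k$ perfect, so $\dim Y=0$. Every graded piece of $F_*^eS_x$ is a one-dimensional $k$-space. Two $w_j$ of equal degree are then proportional, and no $\beta_i$ can separate them. Separately, generic freeness alone does not give a \emph{homogeneous} basis.

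The paper's remedy is as follows.
\begin{itemize}
\item Arrange $x_1\mid c$, so $S_c=T_0[x_1,x_1^{-1}]$ with $T_0$ a normal domain with fraction field $K(Y)$.
\item Since $\dim Y>0$, $K(Y)$ is imperfect (Lemma~\ref{lem:nonperfect}). So $F_*^eT_0$ has $T_0$-rank at least $m+1$ for $e\gg0$.
\item Generic freeness over $T_0$, after inverting a homogeneous multiple of $c$, gives basis vectors $v_j$ of degree $0$.
\item Their coordinate maps extend to $F_*^eS_c$ by the Laurent formula as degree-$0$ maps.
\item Clearing denominators with powers of $c$ gives all $w_j$ of degree $BN$ and all $\beta_i$ of degree $AN$.
\end{itemize}

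Your treatment of $a_0$ also needs repair. Lemma~\ref{lem:compose-theta} needs the element $c$ itself in $\psi_0(F_*^{s_0}(c^2S))$: the $c$ with $c=\theta(F_*(ch))$, $w_j\in F_*^e(c^2S)$ and $\beta_i(w_j)=\delta_{ij}c^L$. Having $cx^k$ in the image does not give this. Promoting $cx^k$ to the new test element breaks the dual-basis normalizations you already built. You also fix $M$ before $c$ is enlarged by generic freeness, so $\deg c$ may exceed $M$.

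The paper orders things the other way. Once $c$ is final, it picks $N$ divisible by $\deg c$ with $N>\max_i\deg b_i$. It replaces $c$ by $c^{N/\deg c}$ before building the $w_j$ and $\beta_i$. Only then does it set $a_{ij}=b_ix_j^{N-\deg b_i}$, so $a_0=c$ automatically has the common degree $N$.
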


\begin{proof}
\setcounter{claim}{0}
We adapt the proof of Proposition~\ref{prop:finite-interpolation}, keeping
the maps homogeneous. For this, we need a homogeneous dual basis and
prescribed values of the same degree. Choose nonzero homogeneous generators
$b_1,\ldots,b_r$ of $I=\tauB(S)$
(Lemma~\ref{lem:tau-homogeneous}). Put $m=r\ell$. Choose homogeneous
$\theta,c$ as in Lemma~\ref{lem:homogeneous-common-data}.

\begin{claim}\label{cl:graded-dual-basis}
After replacing $c$ by a nonzero homogeneous multiple, we can find
$e>0$, $N=\deg c>\max_i\deg b_i$, integers $A\geq0$ and $B\geq1$,
homogeneous maps $\beta_0,\ldots,\beta_m\in\cC_e(S)$ of degree $AN$, and
homogeneous elements $w_0,\ldots,w_m\in F_*^e(c^2S)$ of degree $BN$ such that
\begin{equation}\label{eq:graded-delta}
 \beta_i(w_j)=\delta_{ij}c^L,\qquad L=A+B.
\end{equation}
\end{claim}

\begin{proof}[Proof of Claim~\ref{cl:graded-dual-basis}]
Replace $c$ by $cx_1$ and put $T=S_c$, $T_0=(S_c)_0$. Then $\deg c>0$.
\eqref{eq:laurent-chart} gives $T=T_0[x_1,x_1^{-1}]$. $T_0$
is a normal domain with fraction field $K(Y)$.

Since $\dim Y>0$, a codimension $1$ point of $Y$ has a DVR as its local
ring. Thus $K(Y)$ is not perfect by Lemma~\ref{lem:nonperfect}. Choose
$e>0$ such that $[K(Y):K(Y)^q]\geq m+1$. $F_*^eT_0$ has this rank as $T_0$-module. So generic freeness
\cite[Tag~\stacktag{051S}]{stacks-project} makes it free after inverting
some $0\neq g\in T_0$. Write $g=f/c^k$ with $f\in S$ homogeneous and
$k\geq0$. Replacing $c$ by $cf$ replaces $T_0$ by $(T_0)_g$.
We can now choose $v_0,\ldots,v_m$ from a basis of $F_*^eT_0$.

Choose $N>\max_i\deg b_i$ divisible by $\deg c$ and replace $c$ by
$c^{N/\deg c}$. This leaves $T$ unchanged and gives $\deg c=N$.

Let $v_i^*:F_*^eT_0\to T_0$ be the coordinate maps for those basis
vectors. Extend them to $F_*^eT\to T$ by
\[
 v_i^*(F_*^e(ax_1^n))=
 \begin{cases}
  x_1^{n/q}v_i^*(F_*^ea),&q\mid n,\\
  0,&q\nmid n,
 \end{cases}
 \qquad a\in T_0,\ n\in\Z.
\]
These extensions are $T$-linear and homogeneous of degree $0$. They still
satisfy $v_i^*(v_j)=\delta_{ij}$.

We now clear denominators. Since $F_*^eS$ is finitely presented, Hom
commutes with localization
\cite[Tag~\stacktag{0583}]{stacks-project}. Thus multiplying the targets
of all the $v_i^*$ by one power $c^A$, $A\geq0$, gives maps
$\beta_i\in\cC_e(S)$ with $(\beta_i)_c=c^Av_i^*$. Taking their homogeneous components of degree $AN$ preserves these localizations.

Write $v_j=c^{-B_j}\cdot F_*^ez_j$ with $B_j\geq0$ and $z_j\in S$
homogeneous. Choose $B>\max_jB_j$ and put
\[
 w_j=c^Bv_j=F_*^e(c^{q(B-B_j)}z_j).
\]
Then $\deg w_j=BN$. Also, $w_j\in F_*^e(c^2S)$ because $q(B-B_j)\geq2$.
Finally,
\[
 \beta_i(w_j)=c^Av_i^*(c^Bv_j)=\delta_{ij}c^{A+B}.
\]
This equality holds in $T$, hence in $S$, proving the claim.
\end{proof}

\begin{claim}\label{cl:one-degree}
$a_{ij}=b_ix_j^{N-\deg b_i}$ lie in $I=\tauB(S)$ and have degree $N$.
The ideal $I'=(a_{ij})$ satisfies
\begin{equation}\label{eq:equal-ideal-U}
 I'\cO_U=I\cO_U.
\end{equation}
\end{claim}

\begin{proof}[Proof of Claim~\ref{cl:one-degree}]
Each $a_{ij}$ is a multiple of $b_i$ of degree $N$. On $D(x_j)$, $x_j^{N-\deg b_i}$ is a unit. Hence $I'_{x_j}=I_{x_j}$. These open
sets cover $U$, giving \eqref{eq:equal-ideal-U}.
\end{proof}

It remains to put $I'$ into the test ideal of one homogeneous map.
Relabel $a_{ij}$ as $a_1,\ldots,a_m$ and set $a_0=c$. By
\eqref{eq:hdbs-formula}, $c\in I$. Now every $a_i$ has degree $N$.
Lemma~\ref{lem:synchronization} gives $D>0$ and maps
$\alpha_i\in\cC_D(S)$ such that
\[
 \alpha_i(F_*^Dc^L)=a_i.
\]
Keeping only the homogeneous component of $\alpha_i$ of degree
$N-p^{-D}LN$ preserves this equality.

Put $\psi_0=\sum_{i=0}^m\alpha_i\st\beta_i\in\cC_{D+e}(S)$. By
\eqref{eq:graded-product-degree}, each summand has degree
\[
 \deg(\alpha_i\st\beta_i)
 =N-p^{-D}LN+p^{-D}AN
 =N-p^{-D}BN.
\]
Thus $\psi_0$ is homogeneous. Moreover, \eqref{eq:graded-delta} gives
\[
 \psi_0(F_*^Dw_j)
 =\sum_i\alpha_i(F_*^D\beta_i(w_j))
 =\alpha_j(F_*^Dc^L)=a_j.
\]
Since $w_j\in F_*^e(c^2S)$, we have
\[
 c,a_1,\ldots,a_m\in\psi_0(F_*^{D+e}(c^2S)).
\]
Lemma~\ref{lem:compose-theta}, with $\cE=\cC(S)$ and $s_0=D+e$, therefore
gives a nonzero map
$\psi=\psi_0\st\theta\in\cC_s(S)$, where $s=D+e+1$, with
$I'\subseteq\tauB(S;S\langle\psi\rangle)$. $\psi$ is homogeneous
by \eqref{eq:graded-product-degree}. Monotonicity
\eqref{eq:monotonicity} gives
\[
 I'\subseteq\tauB(S;S\langle\psi\rangle)\subseteq I.
\]
Restricting to $U$ and using \eqref{eq:equal-ideal-U} proves the assertion.
\end{proof}

\begin{remark}\label{rem:a-positive}
    We may also arrange $\deg\psi>0$. The proof of
Lemma~\ref{lem:synchronization} allows any sufficiently large $D$,
while $N,B,e$ and $\theta$ remain fixed. Writing $\delta=\deg\theta$,
we have
\[
 \deg\psi
 =N-p^{-D}BN+p^{-(D+e)}\delta
 =N+p^{-D}(p^{-e}\delta-BN)>0
\]
for $D$ sufficiently large, since $N>0$.
\end{remark}

\subsection{Laurent extensions}\label{sec:laurent}

To descend from the cone to $Y$, we compare test ideals on $B$ and
$B[t,t^{-1}]$. For finitistic test ideal,
\cite[Theorem~4.1]{BravoSmith2002} showed
$\tau(A[t])=\tau(A)A[t]$, assuming test ideals commute with localization.
We prove the Laurent formula below for big test ideals of Cartier
algebras on arbitrary normal $F$-finite domains. Localization holds by
\eqref{eq:localization}. Compare also \cite[Application~5.2]{BravoSmith2002}.

Let $B$ be a normal $F$-finite domain and put $C=B[t,t^{-1}]$. For
$\alpha\in\cC_e(B)$ and $q=p^e$, define
$\widetilde\alpha\in\cC_e(C)$ by
\begin{equation}\label{eq:Laurent-residue-map}
 \widetilde\alpha(F_*^e(bt^n))=
 \begin{cases}
  \alpha(F_*^eb)\,t^{n/q},&q\mid n,\\
  0,&q\nmid n,
 \end{cases}
 \qquad(b\in B,\ n\in\Z).
\end{equation}
This is the smooth pullback action of
\cite[Theorem~5.5(c)]{BlickleStabler2019}, using $dt/t$ to trivialize
the relative canonical module.
The map is $C$-linear. Checking on monomials gives, for $b\in B$ and $\beta\in\cC_f(B)$,
\begin{equation}\label{eq:laurent-multiplicative}
 \widetilde{(F_*^eb)\cdot\alpha}
   =(F_*^eb)\cdot\widetilde\alpha,\qquad
 \widetilde{\alpha\st\beta}
   =\widetilde\alpha\st\widetilde\beta.
\end{equation}

\begin{lemma}\label{lem:laurent-maps}
Every $\phi\in\cC_e(C)$ is a finite sum of maps
$(F_*^et^i)\cdot\widetilde\alpha$, with $i\in\Z$ and
$\alpha\in\cC_e(B)$.
\end{lemma}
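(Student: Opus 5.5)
The plan is to decompose $\phi$ according to residues of exponents of $t$ modulo $q=p^e$. As a $B$-module,
\[
F_*^eC=\bigoplus_{i=0}^{q-1}F_*^e(B[t^{q},t^{-q}]\,t^i).
\]
Over $C$ this reads
\[
F_*^eC\cong\bigoplus_{i=0}^{q-1}(F_*^eB)\otimes_B C\cdot F_*^et^i,
\]
because $t\cdot F_*^e(x)=F_*^e(t^qx)$. So $F_*^eC$ is the free $(F_*^eB\otimes_BC)$-module on the basis $F_*^et^i$, $0\le i<q$. Hence a $C$-linear $\phi:F_*^eC\to C$ is determined by the $q$ restrictions
\[
\phi_i:\ F_*^eB\to C,\qquad F_*^eb\mapsto\phi(F_*^e(bt^{i})),
\]
which are $B$-linear. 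These restrictions can be arbitrary $B$-linear maps, since the $B$-linear maps $F_*^eB\to C$ extend uniquely $C$-linearly on each summand.

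The next step is to write each $\phi_i$ in terms of $\cC_e(B)$. Since $F_*^eB$ is a finitely generated $B$-module and $C=\bigoplus_n Bt^n$ is free, $\Hom_B(F_*^eB,C)=\bigoplus_n\Hom_B(F_*^eB,B)\,t^n$; this uses finite generation, so that only finitely many components are nonzero. Thus $\phi_i=\sum_n\alpha_{i,n}t^n$ is a finite sum with $\alpha_{i,n}\in\cC_e(B)$.

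Now compare with the maps $(F_*^et^j)\cdot\widetilde\alpha$. By \eqref{eq:Laurent-residue-map} we have
\[
((F_*^et^j)\cdot\widetilde\alpha)(F_*^e(bt^{n}))=\widetilde\alpha(F_*^e(bt^{n+j})),
\]
which is nonzero only when $n\equiv -j\pmod q$. Take $j=-i+qn$. On the summand indexed by residue $i$, the map $(F_*^et^{-i+qn})\cdot\widetilde\alpha_{i,n}$ sends $F_*^eb\mapsto\alpha_{i,n}(F_*^eb)t^{n}$, and it kills the summands with residues $\ne i$. Therefore
\[
\phi=\sum_{i=0}^{q-1}\sum_n (F_*^et^{qn-i})\cdot\widetilde{\alpha_{i,n}},
\]
since both sides are $C$-linear and agree on the generators $F_*^e(bt^{i})$. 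This is a finite sum of the required form.

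The only delicate point is the bookkeeping of the $C$-module structure on $F_*^eC$, namely that $C$-linearity reduces everything to the values on $F_*^e(bt^i)$ with $0\le i<q$, together with the finiteness of the $t$-expansion. I expect no real obstacle beyond stating these carefully.
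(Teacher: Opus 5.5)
Your proof is correct and is essentially the paper's argument. The paper also forms the $B$-linear restrictions $\phi_r\colon F_*^eB\to C$, $F_*^eb\mapsto\phi(F_*^e(bt^r))$ for $0\le r<q$, and uses finite generation of $F_*^eB$ to write $\phi_r=\sum_k t^k\alpha_{r,k}$ with finitely many $\alpha_{r,k}\in\cC_e(B)$. It then concludes $\phi=\sum_{r,k}(F_*^et^{qk-r})\cdot\widetilde{\alpha_{r,k}}$ by checking on $F_*^e(bt^{qj+r})$ via $C$-linearity, exactly as you do. Your extra framing of $F_*^eC$ as free over $F_*^eB\otimes_BC$ is correct but not needed; the argument only uses that the elements $F_*^e(bt^i)$ generate $F_*^eC$ as a $C$-module.
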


\begin{proof}
Put $q=p^e$. For $0\leq r<q$,
\[
 \phi_r:F_*^eB\longrightarrow C,\qquad
 F_*^eb\longmapsto\phi(F_*^e(bt^r))
\]
is $B$-linear. Choose finitely many $B$-module generators of $F_*^eB$.
Only finitely many powers of $t$ occur in their images under $\phi_r$.
Taking coefficients therefore gives maps $\alpha_{r,k}\in\cC_e(B)$
such that $\phi_r=\sum_kt^k\alpha_{r,k}$, with only finitely many
nonzero terms.

We then have
\[
 \phi=\sum_{r,k}(F_*^et^{qk-r})\cdot
                    \widetilde{\alpha_{r,k}}.
\]
Indeed, on $F_*^e(bt^{qj+r})$, only the summands indexed by $r$
survive. Their sum is
$\sum_kt^{j+k}\alpha_{r,k}(F_*^eb)=t^j\phi_r(F_*^eb)$.
This is also the value of $\phi$, by $C$-linearity.
\end{proof}

For a Cartier subalgebra $\cD\subseteq\cC(B)$, let
$\cD[t]\subseteq\cC(C)$ consist, in degree $e$, of finite sums of maps
$(F_*^eu)\cdot\widetilde\alpha$, with $u\in C$ and $\alpha\in\cD_e$.
In degree $0$, these are just multiplication maps by elements of $C$.
Equations \eqref{eq:laurent-multiplicative} and \eqref{eq:source-rule}
show that these sums are closed under products. Thus $\cD[t]$ is a
Cartier subalgebra. Lemma~\ref{lem:laurent-maps} gives
$\cC(B)[t]=\cC(C)$. Likewise, \eqref{eq:principal-piece} and
\eqref{eq:laurent-multiplicative} give
\begin{equation}\label{eq:laurent-principal}
 B\langle\phi\rangle[t]=C\langle\widetilde\phi\rangle
 \qquad(0\neq\phi\in\cC_s(B),\ s>0).
\end{equation}
Compare also \cite[Corollary~6.10]{BlickleStabler2019},
 \cite[proof of Proposition~6.1]{rodríguezvillalobos2026bcmthresholdsnonprincipalideals} and
\cite{AjitSimper2025}.

\begin{lemma}[{cf.~\cite[Theorem~4.1]{BravoSmith2002}}]
\label{lem:laurent-test-ideal}
Let $\cD\subseteq\cC(B)$ be a nondegenerate Cartier subalgebra. Then
$\tauB(C;\cD[t])=\tauB(B;\cD)C$. In particular,
$\tauB(C)=\tauB(B)C$, and
$\tauB(C;C\langle\widetilde\phi\rangle)
=\tauB(B;B\langle\phi\rangle)C$
for $0\neq\phi\in\cC_s(B)$, $s>0$.
\end{lemma}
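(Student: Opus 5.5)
Put $J=\tauB(B;\cD)$ and prove the two inclusions $\tauB(C;\cD[t])\subseteq JC$ and $JC\subseteq\tauB(C;\cD[t])$, using that a big test ideal is the smallest $\cD$-stable ideal meeting the nonzero elements, and its description \eqref{eq:hdbs-formula} through an HDBS test element. The two special cases then follow: the first from $\cC(B)[t]=\cC(C)$ (Lemma~\ref{lem:laurent-maps}), the second from \eqref{eq:laurent-principal}.

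For $\subseteq$: $JC$ meets $C^\circ$ since $J\neq0$, so it is enough to show that $JC$ is $\cD[t]$-stable. A degree $e$ element of $\cD[t]$ is a sum of maps $(F_*^eu)\cdot\widetilde\alpha$ with $u\in C$ and $\alpha\in\cD_e$. Such a map sends $F_*^e(JC)$ into $\widetilde\alpha(F_*^e(JC))$, because $JC$ is an ideal of $C$. The ideal $JC$ is spanned by the monomials $bt^n$ with $b\in J$, and by \eqref{eq:Laurent-residue-map} we get $\widetilde\alpha(F_*^e(bt^n))\in\alpha(F_*^eJ)t^{n/q}\subseteq JC$ (or $0$ when $q\nmid n$). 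So $JC$ is stable and contains the test ideal.

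For $\supseteq$: let $c\in B$ be an HDBS test element for $(B,\cD)$. I claim $c$ is also HDBS for $(C,\cD[t])$. Granting this, each generator $\alpha(F_*^e(cb))$ of $J$ from \eqref{eq:hdbs-formula} equals $\widetilde\alpha(F_*^e(cb))$, which lies in $\tauB(C;\cD[t])$ by \eqref{eq:hdbs-formula} on $C$. Hence $J\subseteq\tauB(C;\cD[t])$, and so $JC\subseteq\tauB(C;\cD[t])$.

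The main obstacle is the claim itself. Given $0\neq d\in C$, I need $\phi\in\cD[t]_e$ with $c\in\phi(F_*^e(dC))$. Multiplying $d$ by a power of $t$ does not change $dC$, so I may assume $d=\sum_{k=0}^n d_kt^k$ with $d_0\neq0$. The trick is to pick the maps so that the extra terms of $d$ are killed. By the HDBS property for $d_0$, there are $e$ and $\alpha\in\cD_e$ with $c=\alpha(F_*^e(d_0y))$ for some $y\in B$. Iterates may be needed to make $q=p^e>n$. For this I would replace $\alpha$ by $\rho^{[k]}\st\alpha$, where $\rho$ is a map fixing $c$ on the relevant ideal, as in Lemma~\ref{lem:synchronization}. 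This needs some such $\rho$ with $c\in\rho(F_*(cB))$ up to degree issues. Alternatively, use that the HDBS property for $d_0$ allows $e$ arbitrarily large, since $d_0c'$ is still in $B^\circ$ and one can compose. Once $q>n$, apply $\widetilde\alpha$ to $F_*^e(dy)$. The terms $d_ky t^k$ with $0<k\le n<q$ have $q\nmid k$ and are sent to $0$, so $\widetilde\alpha(F_*^e(dy))=\alpha(F_*^e(d_0y))=c$. This proves the claim.

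To get large $e$ I would argue as follows. The HDBS property applied to $d_0\cdot c'$, where $c'$ is itself HDBS, gives $c'\in\beta(F_*^{e'}(d_0B))$. Then $c\in\gamma(F_*^{e''}(c'B))$, and composing gives $c\in(\gamma\st\beta)(F_*^{e''+e'}(d_0B))$ with $e''+e'\ge e'+1$. Repeating this raises the degree beyond any bound.
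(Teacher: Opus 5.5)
Your proof is correct. The inclusion $\tauB(C;\cD[t])\subseteq\tauB(B;\cD)C$ and the two special cases are handled exactly as in the paper, but you prove the reverse inclusion by a different route. You show that an HDBS test element $c$ for $(B,\cD)$ remains HDBS for $(C,\cD[t])$, and then you compare the two instances of \eqref{eq:hdbs-formula}.

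The paper never considers test elements on $C$. It observes that for $J=\tauB(C;\cD[t])$ the contraction $J\cap B$ is $\cD$-stable, since $\widetilde\alpha(F_*^eb)=\alpha(F_*^eb)$ for $b\in B$. By minimality of $\tauB(B;\cD)$ it then suffices to exhibit one nonzero element of $J\cap B$. It takes $0\neq g=\sum_{n=n_0}^{n_1}b_nt^n\in J$ with $b_j\neq0$, some $q=p^e>n_1-n_0$, and any $\alpha\in\cD_e$, $y\in B$ with $\alpha(F_*^ey)\neq0$. Then $\widetilde\alpha(F_*^e(b_j^{q-1}yt^{-j}g))=b_j\alpha(F_*^ey)\neq0$.

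Both proofs use the same trick: take $q$ larger than the spread of $t$-exponents so that $\widetilde\alpha$ sees a single coefficient. The paper only needs \emph{some} nonzero map in large degree, which iterates of a nonzero map supply (Lemma~\ref{lem:product-divisor}). You need a map of large degree whose image of $F_*^e(d_0B)$ contains the specific element $c$, and that is what forces your degree-raising step. That step is valid as written in your last paragraph. Your $\rho$ variant also works with no degree issue: the HDBS property of $c$ applied to $c$ itself gives $\rho\in\cD_{e_1}$ and $h\in B$ with $\rho(F_*^{e_1}(ch))=c$, so $(F_*^{e_1}h)\cdot\rho$ fixes $c$.

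In return, your route proves the stronger and reusable fact that test elements of $(B,\cD)$ stay test elements of $(C,\cD[t])$. The paper's route is shorter and avoids \eqref{eq:hdbs-formula} on $C$ altogether. You should also state explicitly that $\cD[t]$ is nondegenerate ($\widetilde\alpha\neq0$ whenever $\alpha\neq0$). Both the minimality characterization on $C$ and \eqref{eq:hdbs-formula} on $C$ require it.
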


\begin{proof}
Put $I=\tauB(B;\cD)$ and $J=\tauB(C;\cD[t])$. $\cD[t]$ is nondegenerate because
$\widetilde\alpha\neq0$ whenever $\alpha\neq0$.

First, $IC$ is $\cD[t]$-stable. Indeed, for $\alpha\in\cD_e$,
$q=p^e$ and $b_n\in I$, we have
\[
 \widetilde\alpha\Bigl(F_*^e\sum_nb_nt^n\Bigr)
 =\sum_{q\mid n}\alpha(F_*^eb_n)t^{n/q}\in IC.
\]
Same holds for source multiples and sums of these maps, since
$IC$ is an ideal. Minimality of $J$ therefore gives $J\subseteq IC$.

For reverse inclusion, $J\cap B$ is $\cD$-stable, since
$\widetilde\alpha(F_*^eb)=\alpha(F_*^eb)$ for $b\in B$.
Thus it suffices to find a nonzero element of $J\cap B$.

Take $0\neq g=\sum_{n=n_0}^{n_1}b_nt^n\in J$ and choose $j$ with
$b_j\neq0$. Iterating a nonzero map in $\cD$ lets us
choose $e>0$ with $\cD_e\neq0$ and $q=p^e>n_1-n_0$
(Lemma~\ref{lem:product-divisor}).
Choose $0\neq\alpha\in\cD_e$ and $y\in B$ such that
$\alpha(F_*^ey)\neq0$.
In $t^{-j}g$, constant term is the only term whose exponent is
divisible by $q$. Hence
\[
 \widetilde\alpha\bigl(F_*^e(b_j^{q-1}yt^{-j}g)\bigr)
 =\alpha(F_*^e(b_j^qy))
 =b_j\alpha(F_*^ey)\neq0.
\]
The input lies in $J$, so this element lies in $J\cap B$.
Minimality of $I$ now gives $I\subseteq J\cap B$. Hence $IC\subseteq J$.

The last assertions follow from Lemma~\ref{lem:laurent-maps} and
\eqref{eq:laurent-principal}.
\end{proof}

\subsection{Boundary on \texorpdfstring{$\Proj S$}{Proj S}}

Choose a Cartier divisor $H$ on $Y$ with
$\cO_Y(H)\cong\cO_Y(1)$. Let $\psi\in\cC_e(S)$ be nonzero and
homogeneous.
Since $\psi$ takes some input of degree $n/q$ to a nonzero element
of integer degree, $\deg\psi=a/q$ for some $a\in\Z$.
To obtain a map into $\cO_Y$, we therefore restrict the source to
degree $-a$.

\begin{proposition}\label{prop:Proj-descent}
There is a nonzero $\cO_Y$-linear map
$\overline\psi:F_*^e\cO_Y(-aH)\to\cO_Y$
whose expression on $D_+(f)=\Spec B$, where $0\neq f\in S_1$,
$B=(S_f)_0$, is
\[
 \phi_f:F_*^eB\longrightarrow B,\qquad
 \phi_f(F_*^eb)=\psi_f(F_*^e(bf^{-a})).
\]
Its boundary $\Delta_Y$ satisfies
$(q-1)(K_Y+\Delta_Y)\sim aH$ and
$\Delta_Y|_{D_+(f)}=\Delta_{\phi_f}$.
If
$\tauB(S;S\langle\psi\rangle)\cO_U=\tauB(S)\cO_U$,
then $\tauB(Y,\Delta_Y)=\tauB(Y)$.
\end{proposition}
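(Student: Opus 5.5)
The map $\overline\psi$ will be built by gluing the chart maps $\phi_f$. First I check that $\phi_f$ lands in $B$. If $b\in B=(S_f)_0$, then $bf^{-a}\in S_f$ has degree $-a$, so $F_*^e(bf^{-a})$ has degree $-a/q$. Hence $\psi_f(F_*^e(bf^{-a}))$ has degree $-a/q+a/q=0$ and lies in $(S_f)_0=B$.

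For gluing I work in the ungraded language. On $D_+(f)$ the sheaf $\cO_Y(-aH)$ is $(S_f)_{-a}=Bf^{-a}$, so the intrinsic map is $F_*^e(S_f)_{-a}\to(S_f)_0$, $F_*^ez\mapsto\psi_f(F_*^ez)$. On overlaps $D_+(fg)$ both descriptions are restrictions of $\psi_{fg}$, so they agree. This gives a global $\overline\psi:F_*^e\cO_Y(-aH)\to\cO_Y$, which is nonzero because $\psi\neq0$. Its divisor is $D\sim(1-q)K_Y+aH$: twisting the map--divisor correspondence gives $\Hom(F_*^e\cO_Y(-aH),\cO_Y)\cong F_*^e\cO_Y((1-q)K_Y+aH)$ on reflexive sheaves. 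Setting $\Delta_Y=D/(q-1)$ then yields $(q-1)(K_Y+\Delta_Y)\sim aH$. Once we trivialize $\cO_Y(-aH)$ by $f^{-a}$ on the chart, the local divisor is $D_{\phi_f}$, so $\Delta_Y|_{D_+(f)}=\Delta_{\phi_f}$.

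For the test ideals it suffices to prove $\tauB(B,\Delta_{\phi_f})=\tauB(B)$ on every chart, because test ideals glue. By \eqref{eq:principal-comparison} this amounts to $\tauB(B;B\langle\phi_f\rangle)=\tauB(B)$. Put $C=B[f,f^{-1}]=S_f$ using \eqref{eq:laurent-chart} with $t=f$. The key computation compares $\psi_f$ with $\widetilde{\phi_f}$. I claim
\begin{equation}\label{eq:psi-chart}
 \psi_f=(F_*^ef^{a})\cdot\widetilde{\phi_f}.
\end{equation}
To verify this on monomials $bf^n$, write $bf^{n+a}$. Homogeneity forces $\psi_f(F_*^e(bf^{n+a}))=0$ unless $q\mid n$, since the output degree $(n+a)/q-a/q=n/q$ must be an integer. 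When $q\mid n$, $C$-linearity gives $f^{n/q}\phi_f(F_*^eb)$. So $\psi_f$ is a source multiple of $\widetilde{\phi_f}$ by a unit of $C$, and hence $C\langle\psi_f\rangle=C\langle\widetilde{\phi_f}\rangle$ by \eqref{eq:principal-piece} and \eqref{eq:laurent-multiplicative}.

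Then \eqref{eq:localization} and the hypothesis give $\tauB(C;C\langle\psi_f\rangle)=\tauB(S;S\langle\psi\rangle)_f=\tauB(S)_f=\tauB(C)$, since $D(f)\subseteq U$. Lemma~\ref{lem:laurent-test-ideal} rewrites both sides as $\tauB(B;B\langle\phi_f\rangle)C=\tauB(B)C$. Contracting to $B$ uses $LC\cap B=L$ for ideals $L$ of $B$, since $C$ is free over $B$ with $1$ part of a basis, and gives the equality on $B$.

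The main obstacle is making the gluing and the identification of $\Delta_Y$ precise, in particular showing that the local divisors $D_{\phi_f}$ computed with different trivializations $f^{-a}$ agree on overlaps. For this I would use that the ratio $(f/g)^{-a}$ is a unit on $D_+(fg)$, and that a source multiple by a unit does not change the divisor.
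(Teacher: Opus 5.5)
Your proposal is correct and follows the paper's proof essentially step by step. You glue the degree-zero restrictions of $\psi_f$, read off $\Delta_Y$ from the twisted map--divisor correspondence, prove $\psi_f=(F_*^ef^a)\cdot\widetilde{\phi_f}$ by degree considerations, and then apply Lemma~\ref{lem:laurent-test-ideal} and contract via $LC\cap B=L$. One sign slip to fix: in the monomial check you should evaluate $\psi_f$ on $F_*^e(bf^{n-a})$, whose image has degree $(n-a)/q+a/q=n/q$ and equals $f^{n/q}\phi_f(F_*^eb)$ when $q\mid n$, whereas the input $F_*^e(bf^{n+a})$ you wrote has image of degree $(n+2a)/q$.
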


\begin{proof}
On $D_+(f)$, sections of $\cO_Y(-a)$ are
$(S_f)_{-a}=B\cdot f^{-a}$
\cite[Tag~\stacktag{01MT}]{stacks-project}.
After applying $F_*^e$, these have degree $-a/q$. So $\psi_f$
sends them into the degree $0$ part $B$.
These restrictions agree on overlaps because they come from the same
map $\psi$. They therefore glue to $\overline\psi$, with local
expression $\phi_f$.

We can recover $\psi_f$ from $\phi_f$.
By \eqref{eq:laurent-chart}, $C=S_f=B[f,f^{-1}]$.
For $b\in B$ and $n\in\Z$, the value
$\psi_f(F_*^e(bf^n))$ has degree $(n+a)/q$. It is zero unless
$n=qk-a$ for some $k\in\Z$. In that case,
\[
 \psi_f(F_*^e(bf^{qk-a}))
 =f^k\psi_f(F_*^e(bf^{-a}))
 =f^k\phi_f(F_*^eb).
\]
Thus, taking $t=f$ in \eqref{eq:Laurent-residue-map}, we get
\begin{equation}\label{eq:psi-chart}
 \psi_f=(F_*^ef^a)\cdot\widetilde{\phi_f}.
\end{equation}
In particular, $\phi_f\neq0$ because $\psi_f\neq0$. Thus
$\overline\psi\neq0$.

Now we compute the boundary
(cf.~\cite[Remark~9.5]{SchwedeFAdjunction2009}).
On the regular locus, Frobenius duality (\cite[Corollary~1.0.3]{BhattBlickleSchwedeTucker2026}) and
projection formula give
\[
 \begin{aligned}
 \mathcal{H}om(F_*^e\cO_Y(-aH),\cO_Y)
 &\cong
 (F_*^e\cO_Y(K_Y+aH))\otimes\cO_Y(-K_Y)\\
 &\cong F_*^e\cO_Y((1-q)K_Y+aH).
 \end{aligned}
\]
Both sides are reflexive. So this isomorphism extends to whole $Y$
\cite[Proposition~2.4(2)--(4)]{SchwedeSmith2010}.
Thus $\overline\psi$ determines an effective divisor
$D_{\overline\psi}\sim(1-q)K_Y+aH$.
Set $\Delta_Y=D_{\overline\psi}/(q-1)$. Then
$(q-1)(K_Y+\Delta_Y)\sim aH$.
In the trivialization by $f^{-a}$, the map is $\phi_f$, so
$\Delta_Y|_{D_+(f)}=\Delta_{\phi_f}$.

Finally, assume the equality of test ideals on $U$.
Since $f^a$ is a unit in $C$, \eqref{eq:psi-chart} gives
\[
 C\langle\psi_f\rangle=C\langle\widetilde{\phi_f}\rangle
\]
as each map is a source multiple of the other by a unit.
Localizing the hypothesis at $f$ and using \eqref{eq:localization}
and Lemma~\ref{lem:laurent-test-ideal}, we obtain
\[
 \begin{aligned}
 \tauB(B;B\langle\phi_f\rangle)C
 &=\tauB(C;C\langle\widetilde{\phi_f}\rangle)\\
 &=\tauB(C;C\langle\psi_f\rangle)
 =\tauB(C)
 =\tauB(B)C.
 \end{aligned}
\]
For every ideal $L\subseteq B$, we have $LC\cap B=L$, by taking
Laurent coefficients. Contracting the above equality therefore
gives $\tauB(B;B\langle\phi_f\rangle)=\tauB(B)$.
By \eqref{eq:principal-comparison},
this is
$\tauB(B,\Delta_Y|_{D_+(f)})=\tauB(B)$.
The charts $D_+(f)$ cover $Y$, proving the assertion.
\end{proof}
\subsection{Proof of Theorem~\ref{thm:intro-global}}

A homogeneous coordinate ring need not be normal even if its Proj is normal \footnote{Twisted quartic curve in $\mathbb P^3$ is an example
\cite[Exercise~I.3.18(b)]{Hartshorne1977}}. So we use a complete section ring.

\begin{lemma}[{cf.~\cite[Chapter~3, Proposition~4.13]{SchwedeSmithBook2026}}]
\label{lem:normal-graded-presentation}
Let $A$ be an excellent Noetherian ring and let $X$ be a normal integral
scheme quasi-projective over $A$. Then there is a normal integral projective
compactification $X\subseteq Y=\Proj S$, where $S$ is a standard graded
normal domain of finite type over $A$ and $\cO_Y(1)$ is ample and
invertible. If $A$ is $F$-finite, then so is $S$.
\end{lemma}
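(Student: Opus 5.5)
The construction is the standard one via Nagata's quasi-projective embedding followed by normalization. Since $X$ is quasi-projective over $A$, there is a locally closed immersion $X\hookrightarrow\mathbb P^n_A$. Let $\overline X$ be the scheme-theoretic closure of $X$, with its reduced structure. Then $\overline X$ is integral and projective over $A$, and $X$ is open in it.

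\textbf{Step 1: normalize the closure.} Let $\nu:Y\to\overline X$ be the normalization. Because $A$ is excellent and $\overline X$ is of finite type over $A$, $\overline X$ is excellent, so $\nu$ is finite. Hence $Y$ is a normal integral scheme, projective over $A$. Since $X$ is already normal, $\nu$ is an isomorphism over $X$, and $X$ embeds as a dense open subscheme of $Y$.

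\textbf{Step 2: build a normal standard graded ring.} Fix an ample invertible sheaf $\mathcal L$ on $Y$, for instance the pullback of $\cO(1)$, which is ample because $\nu$ is finite. Consider the section ring $S'=\bigoplus_{n\geq0}H^0(Y,\mathcal L^n)$. Since $Y$ is normal and integral, $S'$ is a normal domain. The argument is that $S'$ is the degree-$\geq0$ part of the normal ring $\bigoplus_{n\in\Z}H^0(Y,\mathcal L^n)$, and that ring is an intersection of the normal rings obtained from an affine cover trivializing $\mathcal L$ (Laurent extensions of normal domains). The degree-$\geq0$ truncation stays integrally closed because integral dependence respects the grading.

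The ring $S'$ is finitely generated over $H^0(Y,\cO_Y)$, which is finite over $A$ by properness, and $\Proj S'=Y$ since $\mathcal L$ is ample. To make the ring standard graded, replace $\mathcal L$ by a high power $\mathcal L^d$. For $d\gg0$, the Veronese $S=\bigoplus_n H^0(Y,\mathcal L^{dn})$ is generated in degree $1$ over $S_0$ (standard Veronese lemma for finitely generated graded algebras). It remains normal, being a Veronese subring of a normal domain: an element of $\Frac S$ integral over $S$ lies in $S'$ and has degree divisible by $d$. Moreover $\Proj S=Y$ and $\cO_Y(1)=\mathcal L^d$ is ample and invertible. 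I expect this normality-plus-generation bookkeeping to be the only delicate point. If in doubt, I would cite the quoted Proposition~4.13 for it.

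\textbf{Step 3: $F$-finiteness.} If $A$ is $F$-finite, then $S$, being of finite type over $A$, is $F$-finite.
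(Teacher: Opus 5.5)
Your proposal is correct and follows the same route as the paper: you take the closure in $\mathbb P^n_A$, pass to its normalization (finite by excellence), take the section ring of an ample line bundle on it, and pass to a Veronese subring to make it standard graded, with $F$-finiteness inherited from finite type over $A$. The differences are minor: you sketch why the section ring and its Veronese are normal, where the paper cites Proposition~4.13, and the general Veronese lemma only gives generation in degree $1$ for suitably divisible $d$, not for all $d\gg0$ (e.g.\ $k[x]$ with $\deg x=2$), which is harmless since one $d$ suffices.
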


\begin{proof}
Choose an immersion $X\hookrightarrow\mathbb P^n_A$. Let $Z$ be its
scheme-theoretic closure and $Y\to Z$ be the normalization.
The scheme $Z$ is excellent and hence Nagata
\cite[Tags~\stacktag{07QU} and~\stacktag{07QV}]{stacks-project}.
So $Y\to Z$ is finite
\cite[Tag~\stacktag{035S}]{stacks-project}.
Thus $Y$ is projective over $A$
\cite[Tags~\stacktag{0B3I} and~\stacktag{0C4P}]{stacks-project}.
Since $X$ is normal, normalization is an isomorphism over $X$
\cite[Tag~\stacktag{0AB1}]{stacks-project}.
View $X$ as a dense open subscheme of the normal integral
scheme $Y$.

Choose an ample invertible sheaf $L$ on $Y$ and put
\[
 T=\bigoplus_{n\geq0}H^0(Y,L^n).
\]
By \cite[Chapter~3, Proposition~4.13(a),(b),(d)]{SchwedeSmithBook2026},
$T$ is a normal domain, finitely generated over its degree $0$ part,
which is a finite $A$-algebra. We have $\Proj T=Y$.
Choose $d>0$ sufficiently divisible so that
\[
 S=T^{(d)}=\bigoplus_{n\geq0}H^0(Y,L^{dn})
\]
is generated in degree $1$
\cite[Tag~\stacktag{0EGH}]{stacks-project}.
This is the section ring of $L^d$. So $S$ is normal and of finite type over $A$, with
$\Proj S=Y$ and $\cO_Y(1)\cong L^d$.

Finally, if $A$ is $F$-finite, absolute Frobenius on $\Spec S$ factors
as relative Frobenius followed by a base change of Frobenius on
$\Spec A$. Both are finite
\cite[Tag~\stacktag{0CCD}]{stacks-project}. So $S$ is $F$-finite.
\end{proof}

\begin{proof}[Proof of Theorem~\ref{thm:intro-global}]
Suppose first that $X$ is integral and $\dim X>0$.
Base is $F$-finite, hence excellent \cite{Kunz1976}.
Lemma~\ref{lem:normal-graded-presentation} gives a compactification
$X\subseteq Y=\Proj S$, with $S$ standard graded, normal and $F$-finite.
Take
$K_X=K_Y|_X$.

Since $\dim Y>0$, Proposition~\ref{prop:homogeneous-interpolation}
gives a nonzero homogeneous $\psi\in\cC_s(S)$ such that
\[
 \tauB(S;S\langle\psi\rangle)\cO_U=\tauB(S)\cO_U.
\]

By \Cref{rem:a-positive} we may choose $\deg\psi>0$.
Write $\deg\psi=a/p^s$, with $a>0$. Choose a Cartier divisor $H$ with
$\cO_Y(H)\cong\cO_Y(1)$. Then $H$ is ample.
Proposition~\ref{prop:Proj-descent} gives effective $\Delta_Y$ with
\[
 \tauB(Y,\Delta_Y)=\tauB(Y),\qquad
 (p^s-1)(K_Y+\Delta_Y)\sim aH.
\]
Restricting to $X$ and using \eqref{eq:localization} proves the assertion
with $\Delta=\Delta_Y|_X$. The index of $K_X+\Delta$ divides $p^s-1$. So prime to $p$.

If $X$ is integral of dimension $0$, it is a field,
and $\Delta=0$ works.

Finally, a normal Noetherian scheme is a finite disjoint union of
normal integral components
\cite[Tag~\stacktag{0357}]{stacks-project}.
Each component is open and closed, hence quasi-projective over the
base. So the integral cases apply. Their boundaries define an effective
$\Q$-divisor on $X$. The equality of test ideals holds componentwise.
Global index is the lcm of the component indices. So prime to $p$.
\end{proof}

\section{Applications}\label{sec:consequences}

Here we give some applications of our main results. 

\subsection{Fixed ideals and exponents}

For mixed test ideals, see \cite[Section~1]{Takagi2006}. We use their
description by Cartier algebras
\cite[Remarks~3.10 and~3.17]{Schwede2011},
\cite[Definitions~3.20--3.21, Proposition~3.22 and the discussion
following it]{Blickle2013}.

\begin{corollary}[{cf.~\cite[Remark~3.10]{Schwede2011}}]\label{cor:intro-mixed}
Let $R$ be a normal $F$-finite domain, let
$\mathfrak a_1,\ldots,\mathfrak a_r$ be nonzero ideals and fix real
numbers $t_1,\ldots,t_r\geq0$. There is an effective $\Q$-divisor
$\Delta$ such that $K_R+\Delta$ is $\Q$-Cartier of index prime to $p$
and
\[
 \tauB(R;\mathfrak a_1^{t_1}\cdots\mathfrak a_r^{t_r})
 =\tauB(R,\Delta).
\]
\end{corollary}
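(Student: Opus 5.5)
We realize the mixed test ideal as the big test ideal of a single Cartier subalgebra and then apply Corollary~\ref{thm:intro-principalization}. By \cite[Remarks~3.10 and~3.17]{Schwede2011} and \cite[Definitions~3.20--3.21, Proposition~3.22]{Blickle2013}, there is a Cartier subalgebra $\cD=\cC^{\mathfrak a^t}\subseteq\cC(R)$ with $\tauB(R;\mathfrak a_1^{t_1}\cdots\mathfrak a_r^{t_r})=\tauB(R;\cD)$. In degree $e$ it is
\[
 \cD_e=\sum (F_*^e\mathfrak a_1^{\lceil t_1(p^e-1)\rceil}\cdots\mathfrak a_r^{\lceil t_r(p^e-1)\rceil})\cdot\cC_e(R),
\]
possibly with a slightly different rounding convention; the exact convention does not matter below. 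Multiplicativity of $\cD$ follows from $\lceil t(p^{d}-1)\rceil p^{f}+\lceil t(p^f-1)\rceil\geq\lceil t(p^{d+f}-1)\rceil$. For real $t_i$ one must check that the cited sources use the same algebra, or the $\lceil t_i p^e\rceil$ variant. I would take whichever version the references identify with Takagi's mixed test ideal.

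Next I verify the hypothesis of Corollary~\ref{thm:intro-principalization}, namely $\cD_e\neq0$ for some $e>0$. Since $R$ is an $F$-finite domain, $\cC_1(R)\neq0$. Since each $\mathfrak a_i$ is nonzero, the product of powers is a nonzero ideal $\mathfrak b$. If $0\neq\phi\in\cC_1(R)$, choose $x$ with $\phi(F_*x)\neq0$ and $0\neq g\in\mathfrak b$. Then $(F_*g)\cdot\phi$ is nonzero, because $\phi(F_*(g^{p}x))=g\,\phi(F_*x)\neq0$ and $g^p\in\mathfrak b$. So $\cD_1\neq0$.

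Corollary~\ref{thm:intro-principalization} then gives $s>0$ and $0\neq\psi\in\cD_s$ with
\[
\tauB(R;\cD)=\tauB(R;R\langle\psi\rangle)=\tauB(R,\Delta_\psi),
\]
where $\Delta_\psi\geq0$ and $(p^s-1)(K_R+\Delta_\psi)\sim0$. Taking $\Delta=\Delta_\psi$ gives the statement, and the index of $K_R+\Delta$ divides $p^s-1$, which is prime to $p$.

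The main obstacle is the first step: pinning down a precise reference showing that the mixed test ideal with real exponents equals $\tauB(R;\cD)$ for a genuine graded Cartier subalgebra $\cD$ with $\cD_0=R$. The remaining steps are formal consequences of earlier results.
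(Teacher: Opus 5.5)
Your proposal is correct and is essentially the paper's own proof. The paper uses the same algebra $\cD_e=\cC_e(R)\cdot F_*^e\mathfrak b_e$ with $\mathfrak b_e=\prod_i\mathfrak a_i^{\ceil{t_i(p^e-1)}}$, checks closure under products via the same ceiling inequality together with \eqref{eq:source-rule}, takes the identification of $\tauB(R;\cD)$ with the mixed test ideal from \cite[Remarks~3.10 and~3.17]{Schwede2011} and \cite{Blickle2013}, notes $\cD_1\neq0$, and applies Corollary~\ref{thm:intro-principalization}.
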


\begin{proof}
We put the ideals and exponents into one Cartier algebra. Set
$\mathfrak b_0=R$ and, for $e\geq1$, put
\[
 \mathfrak b_e
 =\prod_{i=1}^r\mathfrak a_i^{\ceil{t_i(p^e-1)}}.
\]
Let $\cD_e=\cC_e(R)\cdot F_*^e\mathfrak b_e$ be the set of finite
sums of maps $(F_*^eb)\cdot\phi$, where $b\in\mathfrak b_e$ and
$\phi\in\cC_e(R)$. Put $\cD_0=R$.

For each $i$,
\[
 p^f\ceil{t_i(p^e-1)}+\ceil{t_i(p^f-1)}
 \geq\ceil{t_i(p^{e+f}-1)}.
\]
So,
\begin{equation}\label{eq:mixed-coefficients}
 u^{p^f}v\in\mathfrak b_{e+f}
 \qquad
 (u\in\mathfrak b_e,\ v\in\mathfrak b_f).
\end{equation}
Together with \eqref{eq:source-rule}, this shows that $\cD$ is a
Cartier subalgebra. Its test ideal is $\tauB(R;\mathfrak a_1^{t_1}\cdots\mathfrak a_r^{t_r})$. Also $\cD_1\neq0$.
Corollary~\ref{thm:intro-principalization} therefore gives the assertion.
\end{proof}

Corollary~\ref{cor:intro-mixed} absorbs ideals into the boundary. With some more careful work, we can also keep them in the triple as explained below. \begin{lemma}\label{lem:keep-ideals}
Let $R$ be a normal $F$-finite domain. Fix nonzero ideals
$\mathfrak a_1,\ldots,\mathfrak a_r\subseteq R$ and real numbers
$t_1,\ldots,t_r\geq0$. There is an effective $\Q$-divisor
$\Delta$, with $K_R+\Delta$ $\Q$-Cartier of index prime to $p$, such
that
\[
 \tauB(R;\mathfrak a_1^{t_1}\cdots\mathfrak a_r^{t_r})
 =
 \tauB(R,\Delta;\mathfrak a_1^{t_1}\cdots\mathfrak a_r^{t_r}).
\]
\end{lemma}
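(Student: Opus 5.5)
Write $\mathfrak a^t=\mathfrak a_1^{t_1}\cdots\mathfrak a_r^{t_r}$ and let $\cD$ be the Cartier algebra from the proof of Corollary~\ref{cor:intro-mixed}, with $\cD_e=\cC_e(R)\cdot F_*^e\mathfrak b_e$. The inclusion $\tauB(R,\Delta;\mathfrak a^t)\subseteq\tauB(R;\mathfrak a^t)$ holds for every boundary. Indeed, the triple test ideal is the test ideal of the subalgebra with pieces $\cC_e(R,\Delta)\cdot F_*^e\mathfrak b_e\subseteq\cD_e$, so \eqref{eq:monotonicity} applies. The real task is the reverse inclusion. The plan is to find one map $\psi$ such that the subalgebra $\cD^\psi$, with pieces $(F_*^{e}\mathfrak b_{e})\cdot R\langle\psi\rangle_{e}$ in degrees $e\in s\Z$, already computes $\tauB(R;\mathfrak a^t)$. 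By Lemma~\ref{lem:veronese}, working in degrees divisible by $s$ is harmless. Then we take $\Delta=\Delta_\psi$. Since $\psi\in\cC_s(R,\Delta_\psi)$, iterates of $\psi$ lie in $\cC(R,\Delta_\psi)$, so $\cD^\psi$ is contained in the Veronese of the triple algebra. Therefore $\tauB(R;\cD^\psi)\subseteq\tauB(R,\Delta_\psi;\mathfrak a^t)$.

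To produce $\psi$, I would rerun the interpolation of Proposition~\ref{prop:finite-interpolation}, but now for the full algebra $\cC(R)$ with $\theta\in\cC_1(R)$ and $c$ from Lemma~\ref{lem:common-data}. First, fix generators $a_1,\ldots,a_m$ of $\tauB(R;\mathfrak a^t)$. By \eqref{eq:hdbs-formula} for $\cD$, each $a_i$ is a sum of terms $\phi(F_*^d(b\,c^L))$ with $\phi\in\cC_d(R)$ and $b\in\mathfrak b_d$. The natural candidate keeps the ideal factors $b$ separate and uses $\psi$ only for the "divisor part". Concretely, I would build $\psi$ of degree $s$ so that $R\langle\psi\rangle$ contains, after composing with $\rho=(F_*h')\cdot\theta$ as in Lemma~\ref{lem:synchronization}, maps agreeing with each needed $\phi$ on the relevant inputs. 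The device is again the dual basis of Claim~\ref{cl:dual-basis}. We choose $w_j\in F_*^e(c^2R)$ and $\beta_j$, and set $\psi_0=\sum_i\alpha_i\st\beta_i$ with $\alpha_i\in\cC_D(R)$ chosen so that $\alpha_i(F_*^D c^L)=$ the \emph{unmultiplied} map values. The ideal factors are then recovered through source multiplication by elements of $\mathfrak b$, which is allowed in the triple. Finally $\psi=\psi_0\st\theta$, and Lemma~\ref{lem:compose-theta} shows that $c^2$ is an HDBS test element for $R\langle\psi\rangle$.

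The key technical point, and the step I expect to be the main obstacle, is compatibility of ideal exponents with degrees. A term $(F_*^db)\cdot\phi$ with $b\in\mathfrak b_d$ must be realized as $(F_*^{ns}b')\cdot\psi^{[n]}$ with $b'\in\mathfrak b_{ns}$. Composing with $\rho^{[k]}$ to raise degrees multiplies $b$ by a $p^k$th power, and $\mathfrak b_d^{[p^{k}]}\not\subseteq\mathfrak b_{d+k}$ in general. The exponents $\lceil t(p^{d+k}-1)\rceil$ exceed $p^k\lceil t(p^d-1)\rceil$ only by $O(p^k)$ at most. To fix this, I would first use $c^L$ as in \eqref{eq:hdbs-formula}, which lets us absorb a bounded loss. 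I would also test-element-twist by an element $g\in\mathfrak a_1\cdots\mathfrak a_r$. Replace $c$ by $gc$, so that the inputs $w_j$ carry extra factors $g^{N}$ supplying the missing powers. This uses the standard fact that $\tauB(R;\mathfrak a^t)=\tauB(R;\mathfrak a^{t+\varepsilon})$-type perturbations are controlled by a test element; compare \cite[Remark~3.10]{Schwede2011}. Then I would check, via \eqref{eq:mixed-coefficients} and \eqref{eq:source-rule}, that each $a_i$ lands in $\sum_n(\psi^{[n]}\cdot F_*^{ns}\mathfrak b_{ns})(F_*^{ns}c^2R)$. That sum is contained in $\tauB(R;\cD^\psi)$, because $c^2$ times a fixed element of $\mathfrak b$ is HDBS there, by the argument of Lemma~\ref{lem:principal-test-element} applied to $\cD^\psi$. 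The index statement then follows from \eqref{eq:log-trivial}.
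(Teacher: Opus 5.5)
Your framework matches the paper's. Your $\cD^\psi$ is the paper's algebra $\cA$, with $\cA_m=(R\langle\psi\rangle)_m\cdot F_*^m\mathfrak b_m$. You take $\Delta=\Delta_\psi$, and your comparison of $\cD^\psi$ with the algebra of the triple is sound. But as written the proposal does not prove the reverse inclusion, because you do not resolve the step you flag as the main obstacle.

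\textbf{The exponent loss.} Your diagnosis tracks only $b^{p^k}$ and ignores that $\rho^{[k]}$ carries its own source coefficient. Measured against $\mathfrak b_{d+k}$, the shortfall of $b^{p^k}$ can be of order $t_ip^k$ copies of $\mathfrak a_i$ (exactly $t_i(p^k-1)$ when $t_i\in\Z$). So it is not a bounded loss that a fixed power $c^L$ can absorb. The $w_j$ are chosen, together with $e$ and $L$, before $D$ is. Nothing in your sketch makes extra powers $g^N$ placed in them keep pace with a shortfall that grows with the shift. The appeal to perturbing $t$ to $t+\varepsilon$ is not tied to this loss by any argument you give.

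\textbf{The HDBS element.} You take $c$ from Lemma~\ref{lem:common-data} for $\cC(R)$ and then expand generators via \eqref{eq:hdbs-formula} for $\cD$. Being HDBS for $\cC(R)$ does not by itself make $c$ HDBS for the smaller algebra $\cD$. Also, your final HDBS claim for $\cD^\psi$ needs the twisted fixed-point relation $c\in\psi(F_*^s(c\,\mathfrak b_s))$, which your construction never produces.

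\textbf{The missing idea} is to put the fixing map itself into $\cD$. The paper picks $0\neq g\in\mathfrak b_1$ and sets $\xi=(F_*g)\cdot\theta\in\cD_1$. It takes $c$ from Lemma~\ref{lem:common-data} for $R\langle\xi\rangle\subseteq\cD$, multiplied into $\bigcap_i\mathfrak a_i$. Then $c$ is HDBS for $\cD$, $(\mathfrak b_e)_c=R_c$, and $c=\theta(F_*(ch))$ with $h\in\mathfrak b_1$.

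Hence $\rho=(F_*(c^{(p-1)(L-1)}h))\cdot\theta$ lies in $\cD_1$, so $\rho^{[k]}\in\cD_k$. By \eqref{eq:mixed-coefficients}, $\mathfrak b_d^{[p^k]}\mathfrak b_k\subseteq\mathfrak b_{d+k}$, so raising degrees costs nothing.

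With $I=\tauB(R;\cD)$, the argument then runs as follows.
\begin{itemize}
\item Synchronization writes the summands of generators of $I$ as $z_j=\alpha_j(F_*^D(c^Lv_j))$ with $v_j\in\mathfrak b_D$.
\item Clearing denominators gives a dual basis with $w_j\in F_*^e(c^2\mathfrak b_e)$. So $\psi_0(F_*^D(v_jw_j))=z_j$, which yields $I\subseteq\psi_0(F_*^{s_0}(c^2\mathfrak b_{s_0}))$.
\item The same $h\in\mathfrak b_1$ in $\theta(F_*(c^{p+1}hr^p))=c^2r$ upgrades this to $I\subseteq\psi(F_*^s(c^2\mathfrak b_s))$. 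In particular $c\in\psi(F_*^s(c\,\mathfrak b_s))$.
\item This relation iterates by \eqref{eq:mixed-coefficients}. Together with strong $F$-regularity of $R_c\langle\psi_c\rangle$ and $(\mathfrak b_m)_c=R_c$, it shows that $c^2$ itself is HDBS for $\cA$.
\end{itemize}

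Your replacement $c\mapsto gc$ could serve a similar purpose, but for a different reason than you give. It replaces $h$ by $g^{p-1}h$, which lies in $\mathfrak b_1$ once, for example, $g\in\prod_i\mathfrak a_i^{\ceil{t_i}}$. What matters is that the fixing map and the final composition with $\theta$ get coefficients in $\mathfrak b_1$, not that the $w_j$ get extra factors.
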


\begin{proof}
Use $\mathfrak b_e$ and $\cD$ from the preceding proof. Put
$I=\tauB(R;\cD)$. Field case is immediate. Otherwise choose
$0\neq\theta\in\cC_1(R)$, $0\neq g\in\mathfrak b_1$. Put
$\xi=(F_*g)\cdot\theta$. Choose $c_0$ by Lemma~\ref{lem:common-data}
for $R\langle\xi\rangle$, then multiply it into
$\bigcap_i\mathfrak a_i$. It is HDBS for this algebra and for $\cD$.
$\theta_{c_0}$ generates.

By \eqref{eq:hdbs-formula}, generators of $I$ are finite sums of terms
$z_j=\gamma_j(F_*^{d_j}(c_0u_j))$, where
$\gamma_j\in\cC_{d_j}(R)$ and $u_j\in\mathfrak b_{d_j}$.
Interpolate these individual terms.
Dual-basis construction in Proposition~\ref{prop:finite-interpolation}
gives, after replacing $c_0$ by a nonzero multiple $c$,
\[
 \beta_i(w_j)=\delta_{ij}c^L,\qquad
 \beta_i\in\cC_e(R),\quad w_j\in F_*^e(c^2\mathfrak b_e).
\]
That extra factor $\mathfrak b_e$ is obtained by clearing denominators,
since $(\mathfrak b_e)_c=R_c$.
We have $c=\theta(F_*(ch))$ with $h\in\mathfrak b_1$.
The map $(F_*(c^{(p-1)(L-1)}h))\cdot\theta$ fixes $c^L$.
Since $c_0$ is HDBS for $R\langle\xi\rangle$, proof of
Lemma~\ref{lem:synchronization}, using this fixing map, gives
\[
 z_j=\alpha_j(F_*^D(c^Lv_j)),\qquad
 \alpha_j\in\cC_D(R),\quad v_j\in\mathfrak b_D
\]
in one degree $D>0$. All coefficients remain in the required ideals by
\eqref{eq:mixed-coefficients}.

Set $\psi_0=\sum_j\alpha_j\st\beta_j$ and $s_0=D+e$. Then
\[
 \psi_0(F_*^D(v_jw_j))=z_j,\qquad
 v_jw_j\in F_*^e(c^2v_j^{p^e}\mathfrak b_e).
\]
Thus $I\subseteq\psi_0(F_*^{s_0}(c^2\mathfrak b_{s_0}))$.
Put $\psi=\psi_0\st\theta$ and $s=s_0+1$.
Lemma~\ref{lem:compose-theta} makes the localized algebra strongly
$F$-regular. Moreover,
$\theta(F_*(c^{p+1}hr^p))=c^2r$ for $r\in\mathfrak b_{s_0}$.
Equation~\eqref{eq:mixed-coefficients} therefore gives
\[
 I\subseteq\psi(F_*^s(c^2\mathfrak b_s)).
\]

Let $\cA_m=(R\langle\psi\rangle)_m\cdot F_*^m\mathfrak b_m$.
Then $\cA\subseteq\cD$ and
$\cA_c=R_c\langle\psi_c\rangle$.
Also $c\in\psi(F_*^s(c\mathfrak b_s))$.
Induction in Lemma~\ref{lem:principal-test-element} now gives
\[
 c\in\psi^{[n]}(F_*^{ns}(c\mathfrak b_{ns})),\qquad
 c^2\in\psi^{[n]}(F_*^{ns}(c^{p^{ns}+1}\mathfrak b_{ns})).
\]
For any $0\neq d\in R$, strong $F$-regularity after inverting $c$
puts some power $c^k$ in
$\psi^{[a]}(F_*^{as}(d\mathfrak b_{as}))$.
Substitute this into the second formula with $p^{ns}+1\geq k$.
Equation~\eqref{eq:mixed-coefficients} keeps the resulting coefficient
in $\mathfrak b_{(n+a)s}$. Hence $c^2$ is HDBS for $\cA$. We have
\[
 I\subseteq\psi(F_*^s(c^2\mathfrak b_s))
 \subseteq\tauB(R;\cA)\subseteq I.
\]
The $s$th Veronese of $\cA$ is that of the Cartier algebra for
$(R,\Delta_\psi;\prod_i\mathfrak a_i^{t_i})$.
Lemma~\ref{lem:veronese} identifies their test ideals; compare
\cite[Remarks~3.10,~3.17 and Corollary~4.8]{Schwede2011}.
Taking $\Delta=\Delta_\psi$ proves the assertion.
\end{proof}

Thus the version of \cite[Question~6.3]{Schwede2011} with big test
ideals on both sides has an affirmative answer in this affine
setting. The original question has finitistic test ideal on the
right.

The boundary may depend on the ideals and exponents. In fact, the
irrational/indiscrete examples of \cite[Theorem~2]{AjitJumping2026}, together
with \cite[Main Theorem]{BlickleSchwedeTakagiZhang2010}, show that
even for one fixed ideal a single boundary need not work for all
exponents.

\subsection{Finite covers and alterations}

By \cite[Theorem~4.1]{SchwedeTuckerZhang2012}, one finite separable
cover computes the test ideal of a pair $(X,\Delta)$ with
$K_X+\Delta$ $\Q$-Cartier. Our boundary therefore gives such a
description of $\tauB(X)$. Also see  \cite[Theorem~5.6]{ChiecchioEnescuMillerSchwede2018} under finite generation of the
anti-canonical algebra.

We use $\Tr_\pi$ for the trace map from duality, with compatible
canonical divisors as in \cite[\S2]{SchwedeTuckerZhang2012}.
For separable $\pi$, it is induced by the field trace.
For real $\lambda\geq0$, the test ideal
$\tauB(X,\Delta+\lambda D)$ is defined by the Cartier algebra whose
degree $e$ maps are
\[
 \mathcal{H}om_{\cO_X}
 \bigl(F_*^e\cO_X(\ceil{(p^e-1)(\Delta+\lambda D)}),\cO_X\bigr),
\]
viewed as maps on $F_*^e\cO_X$ by restriction, just as in
Section~\ref{sec:map-divisor}; see also
\cite[\S2]{SchwedeTuckerZhang2012}.

\begin{corollary}[{cf.~\cite[Theorem~4.1]{SchwedeTuckerZhang2012}}]\label{cor:one-cover}
Let $X=\Spec R$ be normal, integral and $F$-finite, choose $\Delta$
as in Corollary~\ref{thm:intro-affine} and fix an effective Cartier
divisor $D$ on $X$. There is a finite separable dominant morphism
$\pi:Y\to X$, with $Y$ normal and integral, such that, for every real
$\lambda\geq0$,
\begin{equation}\label{eq:one-cover-ray}
 \tauB(X,\Delta+\lambda D)
 =
 \im\left[
 \pi_*\cO_Y\bigl(
 \ceil{K_Y-\pi^*(K_X+\Delta+\lambda D)}
 \bigr)
 \xrightarrow{\ \Tr_\pi\ }K(X)
 \right].
\end{equation}
At $\lambda=0$, this image is $\tauB(X)$.
\end{corollary}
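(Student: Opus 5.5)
The plan is to reduce to \cite[Theorem~4.1]{SchwedeTuckerZhang2012}, which applies to a pair whose log canonical divisor is $\Q$-Cartier of index prime to $p$ together with an effective Cartier divisor $D$. By Corollary~\ref{thm:intro-affine} we may take $\Delta\geq0$ with $(p^s-1)(K_X+\Delta)\sim0$, so $K_X+\Delta$ is $\Q$-Cartier of index dividing $p^s-1$. The first step is to restate the hypotheses of \cite[Theorem~4.1]{SchwedeTuckerZhang2012} and check them one by one: $X$ is normal, integral and $F$-finite (hence excellent), $\Delta$ is effective with the required index condition, and $D$ is an effective Cartier divisor. That theorem then produces one finite separable dominant $\pi:Y\to X$ with $Y$ normal and integral such that, for all real $\lambda\geq0$, the test ideal $\tauB(X,\Delta+\lambda D)$ equals the trace image in \eqref{eq:one-cover-ray}. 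We use canonical divisors on $X$ and $Y$ compatible with $\Tr_\pi$ as in \cite[\S2]{SchwedeTuckerZhang2012}. Since $D$ is Cartier, $\pi^*D$ is defined, and $\pi^*(K_X+\Delta)$ makes sense as a $\Q$-divisor because $K_X+\Delta$ is $\Q$-Cartier.

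The main point to verify is that the notion of test ideal in \cite{SchwedeTuckerZhang2012} matches the Cartier-algebra definition recalled just before the corollary. For $\lambda\geq0$ the pair $(X,\Delta+\lambda D)$ has degree $e$ maps $\mathcal{H}om(F_*^e\cO_X(\ceil{(p^e-1)(\Delta+\lambda D)}),\cO_X)$, which is $\cC_e(R,\Delta+\lambda D)$ from Section~\ref{sec:map-divisor}. By \cite[Remarks~3.10 and~3.17]{Schwede2011} its big test ideal is the one used in \cite{SchwedeTuckerZhang2012}, including for real $\lambda$. I expect this comparison of conventions, rather than any new construction, to be the only real obstacle. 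I would first handle the sign conventions for $\Tr_\pi$ and for the ceiling, namely $\ceil{K_Y-\pi^*(K_X+\Delta+\lambda D)}$, and then check that the uniformity in $\lambda$ is part of their statement (their cover depends only on $(X,\Delta)$ and $D$).

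Finally, at $\lambda=0$ the right side of \eqref{eq:one-cover-ray} is $\tauB(X,\Delta)$, which equals $\tauB(X)$ by the choice of $\Delta$ in Corollary~\ref{thm:intro-affine}. This gives the last assertion.
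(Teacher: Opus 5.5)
Your proposal is correct and follows the paper's proof: apply \cite[Theorem~4.1]{SchwedeTuckerZhang2012} to $(X,\Delta)$ and $D$, then use $\tauB(X,\Delta)=\tauB(X)$ at $\lambda=0$. When you check that theorem's hypotheses, note what the paper points out: the finite separable cover version holds for arbitrary normal $F$-finite $X$, as remarked right after the Main Theorem of \cite{SchwedeTuckerZhang2012}, so no finite-type assumption is needed.
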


\begin{proof}
Apply \cite[Theorem~4.1]{SchwedeTuckerZhang2012} to $(X,\Delta)$
and $D$. The finite separable cover exists for arbitrary normal
$F$-finite $X$, as noted immediately after the Main Theorem there.
At $\lambda=0$, use $\tauB(X,\Delta)=\tauB(X)$.
\end{proof}

If $X$ is of finite type over an $F$-finite field,
\eqref{eq:one-cover-ray} also holds for a suitable regular alteration.
Over a perfect field, this alteration can be chosen separable
\cite[Theorem~4.1]{SchwedeTuckerZhang2012}.
The jumping numbers of
$\lambda\mapsto\tauB(X,\Delta+\lambda D)$ are discrete and rational
by \cite[Theorem~3.1]{SchwedeTuckerZhang2012}; see also
\cite{BlickleSchwedeTakagiZhang2010}.

Applying \cite[Theorem~4.1]{SchwedeTuckerZhang2012}, with $D=0$,
to the boundary of Corollary~\ref{cor:intro-mixed} gives a finite
separable cover whose trace computes the fixed mixed test ideal
$\tauB(R;\mathfrak a_1^{t_1}\cdots\mathfrak a_r^{t_r})$.
For varying $\lambda$, however, \eqref{eq:one-cover-ray} computes
$\tauB(X,\Delta+\lambda D)$, which need not equal
$\tauB(X;\cO_X(-D)^\lambda)$.
For test ideals of general ideals, rationality can fail without the
$\Q$-Cartier hypothesis \cite[Theorem~2]{AjitJumping2026}.

\appendix

\section{Playing with Products}\label{App}

\begin{center}
    \textit{Joint with Daniel Apsley.}
\end{center}

A natural question following the positive result of Corollary~\ref{thm:intro-affine} is whether the $F$-signature of a non-$\mathbb Q$-Gorenstein singularity $\operatorname{Spec}R$ can be approximated by the $F$-signatures of appropriate log pairs $(\operatorname{Spec}R,\Delta)$ with $K_R+\Delta$ $\mathbb Q$-Cartier of index prime to the characteristic. This question was asked by Polstra and Tucker in \cite[Question~7.5]{PolstraTucker2018}. In this appendix, we exhibit a simple example giving a negative answer. We also describe a proposed counterexample to another natural question about lifting the result of Corollary~\ref{thm:intro-affine} to characteristic zero. 

\subsection{A Question of Polstra and Tucker} Throughout this subsection, fix an algebraically closed field $k$ of characteristic $p>0$ and let
\[
    S=\bigoplus_{d\geq0}H^0(\mathbb P^1_k\times\mathbb P^2_k,
                            \mathcal O(d,d)).
\]
Let $(R,\mathfrak m)$ be the local ring of $\operatorname{Spec}S$ at the cone point. Then $R$ is an $F$-finite normal local domain of dimension $4$.

\begin{lemma}\label{lem:gap-basic-revised}
\begin{enumerate}
    \item $R$ is not $\mathbb Q$-Gorenstein.
    \item $s(R)=11/24$; in particular, $R$ is strongly $F$-regular.
    \item For every integer $q\geq1$,
    \[
        \ell_R(R/\mathfrak m^q)
        =\frac{q(q+1)(q+2)(3q+1)}{24}.
    \]
\end{enumerate}
\end{lemma}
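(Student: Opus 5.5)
The three assertions are of different natures, so I will treat them separately. Throughout I will use the fact that $S$ is the Segre product of $k[x_0,x_1]$ and $k[y_0,y_1,y_2]$. Concretely, it is the normal affine semigroup ring of
\[
 \Lambda=\{(a,b)\in\Z_{\geq0}^2\times\Z_{\geq0}^3:\ a_0+a_1=b_0+b_1+b_2\}.
\]
So $R$ is a localized normal toric ring with $\dim R=4$, and $S_d=H^0(\mathcal O(d,d))$ has dimension $(d+1)\binom{d+2}{2}$.

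\emph{(1)} Put $Y=\mathbb P^1_k\times\mathbb P^2_k$ and $L=\mathcal O(1,1)$. For the section ring of an ample line bundle on a normal projective variety with $\operatorname{Cl}(Y)=\operatorname{Pic}(Y)$, I will use the standard facts
\[
 \operatorname{Cl}(S)\cong\operatorname{Cl}(R)\cong\operatorname{Pic}(Y)/\Z L,
\]
where the class of $K_S$ is the image of $K_Y$. Localizing at the cone point does not change the class group, since $S$ is graded. Here $\operatorname{Pic}(Y)/\Z(1,1)\cong\Z$ via $(a,b)\mapsto a-b$. Also $K_Y=\mathcal O(-2,-3)$ maps to $1\neq0$, so $K_R$ has infinite order in $\operatorname{Cl}(R)$. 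Hence no multiple of $K_R$ is Cartier, i.e.\ $R$ is not $\Q$-Gorenstein.

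One can also see this torically. The rays of the cone are given by the five coordinate functionals. $K_R$ is Cartier up to torsion iff some $m\in M_\Q$ pairs to $1$ with all of them. That would force $a_0+a_1=2$ and $b_0+b_1+b_2=3$, which contradicts $a_0+a_1=b_0+b_1+b_2$ on $M$.

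\emph{(2)} I will invoke the toric $F$-signature formula (Watanabe--Yoshida, Von Korff). For a normal toric ring, $s(R)$ is the volume, normalized by the lattice $M$, of
\[
 P=\{u\in M_{\mathbb R}:\ 0\leq\langle u,v_i\rangle<1\text{ for all primitive rays }v_i\}.
\]
Here $M=\{(a,b)\in\Z^5:\sum a_i=\sum b_j\}$. The projection to $(a_0,a_1,b_0,b_1)$ is a lattice isomorphism, since $b_2$ is determined by the others. So $s(R)$ is the Lebesgue volume of
\[
 \{(a_0,a_1,b_0,b_1)\in[0,1]^4:\ 0\leq a_0+a_1-b_0-b_1\leq1\}.
\]
Condition on $\sigma=a_0+a_1$, which has triangular density $\min(\sigma,2-\sigma)$ on $[0,2]$. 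Given $\sigma$, the slice volume $G(\sigma)$ is the mass of $b_0+b_1$ in $[\sigma-1,\sigma]$. This gives
\[
 G(\sigma)=\sigma^2/2\ \text{ on }[0,1],\qquad G(\sigma)=-\sigma^2+3\sigma-3/2\ \text{ on }[1,2].
\]
Integrating $\min(\sigma,2-\sigma)\,G(\sigma)$ gives $1/8+1/3=11/24$. Since $s(R)>0$, $R$ is strongly $F$-regular by Aberbach--Leuschke. The main care needed is in checking the lattice normalization, i.e.\ that the projection is unimodular, and that the rays are exactly the five coordinate functionals. The latter holds because each coordinate hyperplane meets the cone in a facet.

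\emph{(3)} Since $S$ is standard graded with $S_0=k$, we have $\mathfrak m^q=\bigoplus_{d\geq q}S_d$ after localizing. Therefore
\[
 \ell_R(R/\mathfrak m^q)=\sum_{d=0}^{q-1}(d+1)^2(d+2)/2.
\]
I will verify the closed form $q(q+1)(q+2)(3q+1)/24$ by induction on $q$. The base case $q=1$ gives $1$. For the inductive step I check that the difference of consecutive values of the closed form equals $q^2(q+1)/2$ (take $d=q-1$ in the summand), which is a routine polynomial identity.
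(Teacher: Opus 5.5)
Your proposal is correct and follows the paper's route. Part (1) uses the same criterion, that $K_Y=\mathcal O(-2,-3)$ is not rationally proportional to $\mathcal O(1,1)$; you phrase it through $\operatorname{Cl}(R)\cong\operatorname{Pic}(Y)/\Z L$ or torically. Parts (2) and (3) carry out the toric computations the paper leaves to Singh and Von Korff, and your details check out: the projection $M\to\Z^4$ is unimodular, the volume is $1/8+1/3=11/24$, and $\sum_{n=1}^{q}n^2(n+1)/2=q(q+1)(q+2)(3q+1)/24$.
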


\begin{proof}
First assertion follows from the fact that the polarization $\mathcal O(1,1)$ is not rationally proportional to the canonical bundle $\mathcal O(-2,-3)$ in the N\'eron--Severi space of $\mathbb P^1\times\mathbb P^2$; see \cite[Proposition~3.14]{kollarsings}. Second and third assertions are toric computations. We direct towards \cite[Example~7]{Singh2005} (compare
\cite[Theorem~3.3]{vonKorff2011}) and leave the details to the reader.
\end{proof}

\begin{theorem}\label{thm:gap-revised}
If $\Delta$ is an effective $\mathbb Q$-divisor on $\operatorname{Spec}R$ such that $K_R+\Delta$ is $\mathbb Q$-Cartier of index prime to $p$, then $s(R,\Delta)\leq1/4$. In particular,
\[
    s(R)-s(R,\Delta)\geq\frac{5}{24}.
\]
\end{theorem}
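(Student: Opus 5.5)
We would prove Theorem~\ref{thm:gap-revised} by passing to the Cox ring, where the non-$\Q$-Gorenstein obstruction turns into a numerical constraint on $\Delta$. Then we would reduce to a torus-invariant $\Delta$ by degeneration, and finish with a polytope volume computation. The second assertion then follows from Lemma~\ref{lem:gap-basic-revised}(2), since $11/24-1/4=5/24$.

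\emph{Step 1: Cox ring and the constraint on $\Delta$.} Let $T=k[x_0,x_1,y_0,y_1,y_2]$ with the $\mathbb G_m$-action of weight $1$ on the $x_i$ and weight $-1$ on the $y_j$. Then $S=T^{\mathbb G_m}$, and the quotient $\Spec T\setminus(\text{unstable locus})\to\Spec S\setminus\{\text{vertex}\}$ is a $\mathbb G_m$-torsor in codimension $1$. We get $\operatorname{Cl}(R)\cong\Z^2/\Z(1,1)\cong\Z$ via $(a,b)\mapsto b-a$, with $K_R\mapsto -1$. This class is non-torsion, which is Lemma~\ref{lem:gap-basic-revised}(1). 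Hence $K_R+\Delta$ being $\Q$-Cartier forces $[\Delta]=1$ in $\operatorname{Cl}(R)\otimes\Q$. If the index is $q-1$ with $q=p^e$, then $(q-1)(K_R+\Delta)\sim 0$ gives $\Delta=\Delta_\phi$ for some $\phi\in\cC_e(R)$. Lifting to $T$, where every degree-$e$ map is a source multiple $(F_*^ef)\cdot\Phi_T$ of the standard generator $\Phi_T$, $\phi$ corresponds to a $\mathbb G_m$-semi-invariant $f\in T$. Its weight forces $\deg_y f-\deg_x f=q-1$ (up to the sign convention), and $\Delta_T=\operatorname{div}(f)/(q-1)$ is the pullback of $\Delta$. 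We would also check that $s(R,\Delta)$ is computed as the limit of $q^{-4}$ times the number of splittings of $F_*^{ne}R$ compatible with $\phi^{[n]}$. Through the quotient, this is a count of degree-$0$ monomial directions in $F_*^{ne}T$ not killed by $f^{1+q+\cdots}$-twisted $\Phi_T$.

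\emph{Step 2: degeneration to a monomial.} We expect this to be the main obstacle. We would fix a monomial order and show $s(R,\Delta_f)\le s(R,\Delta_{\mathrm{in}(f)})$. The splitting-number computation for $\phi^{[n]}$ is the length of $R/I_n$, where $I_n=\{r:\ \phi^{[n]}(F_*(rR))\subseteq\mathfrak m\}$. For a principal source multiple on the polynomial ring $T$, this is an ideal quotient $(\mathfrak m^{[q^n]}:f^{(q^n-1)/(q-1)})$ in degree $0$. Colon ideals against a polynomial can only grow in colength under passing to initial forms, by a Gröbner/flat-family semicontinuity argument applied degreewise, which is compatible with the $\mathbb G_m$-grading. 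If this proves too delicate, we would instead use lower semicontinuity of $F$-signature in flat families of pairs over $\mathbb A^1$, with the Gröbner degeneration as the family. Either way, after this step $f$ is a monomial $x^by^a$ with $\sum a_j-\sum b_i=q-1$. Then $\Delta$ is torus invariant with coefficients $\alpha_j=a_j/(q-1)$ along the $y_j$-divisors and $\beta_i$ along the $x_i$-divisors, subject to $\sum\alpha_j-\sum\beta_i=1$.

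\emph{Step 3: toric volume.} For a toric pair, the $F$-signature equals the volume of the polytope $P_\Delta=\{u\in M_\mathbb R:\ 0\le\langle u,v_\rho\rangle<1-\delta_\rho\}$ over the rays $v_\rho$ of the cone of $R$, with $\delta_\rho$ the coefficient of $\Delta$. Equivalently, in Cox coordinates it is the volume of the slice $\{w\in\prod[0,1-\delta_\rho):\ w_{x_0}+w_{x_1}=w_{y_0}+w_{y_1}+w_{y_2}\}$, suitably normalized. Setting $\Delta=0$ recovers $11/24$, which fixes the normalization. We would then maximize this volume over $\alpha_j,\beta_i\ge0$ with $\sum\alpha_j=1+\sum\beta_i$. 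Taking $\beta=0$ is optimal, since shrinking the $x$-intervals only loses volume. The volume of the degree-$0$ slice, as a function of the $y$-box side lengths $1-\alpha_j$ with $\sum(1-\alpha_j)=2$, is concave or symmetric enough that its maximum is attained at an explicit extremal point. We would compute that value directly as a piecewise-polynomial integral and verify that it equals $1/4$.
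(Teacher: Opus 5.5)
Your Step~2 is where the argument breaks, and the problem is the direction of the inequality. For a monomial ideal such as $\mathfrak m_T^{[Q]}$ one has $\mathrm{in}(\mathfrak m_T^{[Q]}:g)\subseteq(\mathfrak m_T^{[Q]}:\mathrm{in}(g))$. Indeed, if $hg\in\mathfrak m_T^{[Q]}$, then every term of $hg$ lies in $\mathfrak m_T^{[Q]}$, in particular $\mathrm{in}(h)\,\mathrm{in}(g)$. Initial ideals preserve the dimension of each $\mathbb G_m$-weight piece, so the colon against $\mathrm{in}(g)$ has \emph{smaller or equal} colength in weight $0$. Equivalently, the rank of multiplication by $g$ on $T/\mathfrak m_T^{[Q]}$ can only drop in the Gr\"obner degeneration. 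So what you actually get is $s(R,\Delta_{\mathrm{in}(f)})\le s(R,\Delta_f)$. Your fallback, lower semicontinuity of $F$-signature over $\mathbb A^1$ with the monomial as special fiber, gives exactly the same inequality. A familiar example: in $k[x,y]$, $x^2$ and $y^3$ are initial forms of the cusp $x^2-y^3$, with $F$-pure thresholds $1/2$ and $1/3$, while the cusp's is $5/6$ when $p\equiv1\bmod 6$. For coefficients in between, the monomial pair has $F$-signature $0$ and the cusp pair does not. Degenerating to a monomial therefore gives lower bounds on $s(R,\Delta)$. Nothing in your proposal bounds $s(R,\Delta)$ from above for boundaries that are not torus invariant. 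Nor does it handle boundaries that are not even graded: $\Delta$ lives on the local ring, so $f$ is only a semi-invariant in a localization of $T$.

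Step~3 also would not return $1/4$. With $\beta=0$, $c_j=1-\alpha_j$ and $\sum c_j=2$, the slice volume is maximized at $\alpha_j=1/3$ (by symmetry and Brunn--Minkowski concavity), where it equals $35/162<1/4$. So if your reduction were valid, it would prove the strictly stronger bound $s(R,\Delta)\le 35/162$ for every $\Delta$. Since general boundaries degenerate to torus-invariant ones rather than the reverse, there is no reason to expect the torus-invariant case to be extremal.

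The idea you are missing is the one the paper uses to treat all $\Delta$ at once. If $N\mid p^e-1$, the degree-$e$ part of the Cartier algebra of $\Delta$ is generated by a single map $\varphi_e$. Hence $A_e=R/I_e^\Delta$ is $0$ or Artinian Gorenstein, since the functional induced by $\varphi_e$ gives a perfect pairing. The toric input is then needed only for $\Delta=0$: monomials outside the ordinary splitting ideal $I_e$ have Segre degree at most $2(q-1)$, so $\mathfrak m^{2q-1}\subseteq I_e\subseteq I_e^\Delta$. Thus the image $J$ of $\mathfrak m^q$ in $A_e$ satisfies $J^2=0$, so $J\subseteq\operatorname{Ann}(J)$, and duality gives $\ell(J)\le\ell(A_e/J)$. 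Therefore $\ell(A_e)\le 2\ell_R(R/\mathfrak m^q)$, and Lemma~\ref{lem:gap-basic-revised}(3) yields $s(R,\Delta)\le 2\cdot\frac18=\frac14$. No Cox-ring lifting, degeneration or optimization is needed.
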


\begin{proof}
Let $N$ be the index of $K_R+\Delta$ and choose $e_0>0$ such that $N\mid p^{e_0}-1$. For $e$ divisible by $e_0$, put $q=p^e$. Write $\mathscr C^\Delta_\bullet$ for the Cartier subalgebra associated to $\Delta$ and $I_e^\Delta$ for its splitting ideals. Since $(q-1)(K_R+\Delta)$ is Cartier, $\mathscr C^\Delta_e$ is generated by a single map $\varphi_e$ under precomposition with multiplication. $\mathfrak m^{[q]}\subseteq I_e^\Delta$ shows that
$A_e=R/I_e^\Delta$ has finite length. It is either zero or Artinian
Gorenstein. Indeed, the $k$-linear functional
\[
    \lambda(f)=\bigl(\overline{\varphi_e(F^e_*f)}\bigr)^q
\]
induces the perfect pairing $(\bar f,\bar g)\mapsto\lambda(fg)$ on $A_e$, by the definition of $I_e^\Delta$.

The usual splitting ideal $I_e$ is monomial. Toric description shows that every monomial outside $I_e$ has Segre degree at most $2(q-1)$; see \cite[Theorem~3.3 and its proof]{vonKorff2011}. Thus $\mathfrak m^{2q-1}\subseteq I_e$. Since $\Delta$ is effective, we obtain
\[
    \mathfrak m^{2q-1}\subseteq I_e\subseteq I_e^\Delta.
\]
Let $J$ be the image of $\mathfrak m^q$ in $A_e$. Then $J^2=0$, so $J\subseteq\operatorname{Ann}_{A_e}(J)$. If $A_e\ne0$, the Gorenstein property gives a perfect pairing $A_e\times A_e\to k$ and the inclusion above yields
\[
    \ell_R(J)\leq\ell_R(\operatorname{Ann}_{A_e}(J))
               =\ell_R(A_e/J).
\]
Consequently, whether or not $A_e=0$,
\[
    \ell_R(A_e)\leq2\ell_R(A_e/J)
    =2\ell_R\bigl(R/(I_e^\Delta+\mathfrak m^q)\bigr)
    \leq2\ell_R(R/\mathfrak m^q).
\]
Divide by $q^4$ and let $e\to\infty$ through multiples of $e_0$. By existence  \cite{PolstraTucker2018} and Lemma~\ref{lem:gap-basic-revised}(3),
\[
    s(R,\Delta)
    \leq\lim_{q\to\infty}\frac{q(q+1)(q+2)(3q+1)}{12q^4}
    =\frac14.
\]
\end{proof}

\begin{remark}
Since the toric computations above are independent of the characteristic, the same example gives a gap for the limit $F$-signature. Indeed, any fixed effective $\mathbb Q$-divisor $D$ on the corresponding characteristic-zero cone with $K_R+D$ $\mathbb Q$-Cartier has reductions satisfying the hypotheses of Theorem~\ref{thm:gap-revised} in sufficiently large characteristics. 
\end{remark}

\subsection{Further Questions and Counterexamples} 
Here, we consider variations of the above to give negative answers to questions about choosing boundaries uniformly. For example, we first observe that one cannot always choose a single boundary $\Delta$ with $K_R+\Delta$ $\mathbb Q$-Cartier of index prime to $p$ that computes the test ideals of $(R, \mathfrak{a}^\lambda)$ for every $\lambda \geq 0$, even when anticanonical algebra of $R$ is finitely generated. 

\begin{remark}
The following examples were suggested by ChatGPT 6.1 Sol during our further investigation of the example above. Both involve at least one nontrivial step requiring substantial calculation. We record them here (without claiming any credit) in the hope that simpler counterexamples can be found. We have not tried that. 
\end{remark}

 \begin{question}\label{q:prime-to-p-uniform-boundary}

Let $R$ be a normal $F$-finite domain of characteristic $p>0$ whose
anti-canonical algebra is finitely generated and let
$\mathfrak a$ be a nonzero ideal of $R$. Does there always exist an
effective $\mathbb Q$-divisor $\Delta$ such that $K_R+\Delta$ is
$\mathbb Q$-Cartier of index prime to $p$ and
\[
\tauB(R;\mathfrak a^\lambda)
=
\tauB(R,\Delta;\mathfrak a^\lambda)
\]
for every real $\lambda\geq0$?
\end{question}


\begin{example}\label{example-AI-1}(ChatGPT)
    Let $k = \mathbb{F}_5$ and let $\operatorname{Spec} R$ be the affine cone over $\mathbb{P}^1 \times C$ with respect to $\mathcal{O}(1) \boxtimes \omega_C$, where $C$ is the genus $2$ curve in $\mathbb{P}(1,1,3)$ defined by the equation $$y_0^6 + y_1^6 = y_2^2.$$ Then, $R$ is normal, $F$-finite and its anticanonical algebra is finitely generated. Indeed, $R$ is the Segre product of $k[x_0, x_1]$ and the weighted homogeneous coordinate ring of $C$. Its anticanonical algebra is generated over $R$ by the four binary cubics in anticanonical degree $1$.

    Consider the $\Q$-Weil divisor $\Delta = \frac{1}{10}D_f$ on $\operatorname{Spec} R$, where $D_f$ is the cone over the divisor cut out by $$f = x_0^{30} + 4x_1^{14}x_0^{16} + x_1^{23}x_0^7 + 3x_1^{30}$$ on $\mathbb{P}^1 \times C$. Writing $\overline{\Delta}$ for the corresponding divisor on $\mathbb{P}^1 \times C$, we have $10(K_{\mathbb{P}^1 \times C} + \overline{\Delta}) \sim \mathcal{O}_{\mathbb{P}^1}(10) \boxtimes \omega_C^{\otimes 10}$. Thus $K_R + \Delta$ is $\mathbb{Q}$-Cartier of index dividing $10$.

    Moreover, one can show (by a substantial calculation) that if $g = x_0y_0 - x_1y_1 \in R$ and $I = R_{\geq 2}$, then $$\tauB(R; (g)^\lambda) = \tauB(R, \Delta; (g)^\lambda) = g^{\lfloor\lambda\rfloor}I \qquad \text{for every real }\lambda \geq 0.$$

    On the other hand, there is no effective $\Q$-divisor $\Delta'$ such that $K_R + \Delta'$ is $\Q$-Cartier of index coprime to $5$ and 

    \[
\tauB(R;(g)^\lambda)
=
\tauB(R,\Delta';(g)^\lambda)
\qquad
\text{for every real }\lambda\geq0.
\]
The nonexistence of $\Delta'$ follows from the formula above, \cite[Lemma 2.1]{stabler} and a graded Frobenius calculation as $\lambda$ approaches $1$ from below. 
\end{example}

Next, we quote a related example which we expect to give a negative answer to the following natural liftability question. See \cite[Theorem D]{ChiecchioEnescuMillerSchwede2018} and Corollary~\ref{thm:intro-affine} for motivation. 

\begin{question}
    Let $R$ be a normal domain essentially of finite type over a field of characteristic $0$, with finitely generated anticanonical algebra and fix a nonzero ideal $\mathfrak{a}$ of $R$. After spreading out $(R,\mathfrak a)$ over a finitely generated integral $\mathbb Z$-algebra, does there exist an effective $\mathbb Q$-divisor $\Delta$ with $K_R+\Delta$ $\mathbb Q$-Cartier and a dense open subset of the arithmetic base such that
\[
\tauB(R_s;\mathfrak a_s^\lambda)
=
\tauB(R_s,\Delta_s;\mathfrak a_s^\lambda)
\]
for every real $\lambda\geq0$ and every closed point $s$ in this open set?

\end{question}

\begin{example}\label{example-AI-2}(ChatGPT)
    Let $R$ be the local ring of the vertex of the affine cone over $\mathbb{P}^1 \times \mathbb{P}^2$ with respect to $\mathcal{O}(1,2)$ over the complex numbers. We can describe this local ring explicitly as the localization at the vertex of the invariant subring of $\mathbb{C}[x,z,y_1, y_2, y_3]$ where $\mathbb{C}^*$ acts on this polynomial ring with weights $(2,2,-1,-1,-1)$. We fix $c \in \mathbb C$ transcendental over $\Q$ and define $$g = [xz(x -z)(x - cz)]^3 (y_1y_2y_3)^8, \;\;\;\;\;\;\; \mathfrak{a} = (g) \subset R.$$ This ring is toric, non-$\Q$-Gorenstein and its anticanonical algebra is finitely generated by Gordan's lemma.

    Spreading out $c$ as a parameter $t$ gives a model of $R$ over $\Z[t,t^{-1},(t-1)^{-1}]$. For each closed fiber $s$ of characteristic $p \geq 5$, one has $$\operatorname{fpt}(R_s, g_s) = \frac{1}{8}.$$

    To obtain a negative answer, one would need to show that for every fixed effective characteristic-zero boundary $\Delta$ with $K_R+\Delta$ $\Q$-Cartier, every dense open subset of a base over which $(R,g,\Delta)$ is defined contains closed points $s$ of arbitrarily large positive characteristic for which $$\operatorname{fpt}(R_s, \Delta_s, g_s) < \frac{1}{8} = \operatorname{fpt}(R_s, g_s).$$
    This then follows from a calculation of the degree of the Hodge bundle for a family of cyclic covers associated to $\Delta$. 
\end{example}

\subsection{Future Directions} Finally, given this gap in the $F$-signature, we ask whether similar behavior occurs for other local measures of singularities. 

\begin{definition}
    Let $(R,\mathfrak m)=\mathcal O_{X,x}$ be the local ring of a potentially klt singularity $x \in X$ of dimension $n$ in characteristic $0$. In this setting, $R$ is log terminal in the sense of \cite{deFernexHacon2009}. Using the log canonical threshold of \cite[Definition 7.1]{deFernexHacon2009}, we define $$\widehat{\operatorname{vol}}(x,X)=\widehat{\operatorname{vol}}(R,\mathfrak m) := \inf_{\mathfrak a} \operatorname{lct}(R,\mathfrak a)^n e(\mathfrak a),$$ where the infimum is over $\mathfrak m$-primary ideals and $e(\mathfrak a)$ denotes the Hilbert--Samuel multiplicity. 
\end{definition}

\begin{remark}
    When $R$ is $\Q$-Gorenstein, this agrees with its usual normalized volume by \cite[Theorem 27]{liu18}.
\end{remark}

\begin{question}
    Is there an equality $$\widehat{\operatorname{vol}}(x, X) = \sup_{\Delta} \widehat{\operatorname{vol}}(x, X, \Delta),$$ where $\Delta$ ranges over effective $\Q$-divisors such that $x \in (X,\Delta)$ is klt? If so, is this supremum a maximum? Does either statement hold when the normalized volume is replaced by the $F$-volume in positive characteristic? 
\end{question}

\bibliographystyle{amsalpha}
\bibliography{ArXiv}
\end{document}